\title[Instanton Complex and Morse-Bott inequalities on pseudomanifolds]{Witten Instanton Complex and Morse-Bott inequalities on stratified pseudomanifolds}
\date{\today}
\author{Gayana Jayasinghe} 
\email{mgsjayasinghe@gmail.com}
\author{Hadrian Quan} 
\email{hquan1@ucsc.edu}
\author{Xinran Yu}
\email{xinran4@illinois.edu}
\keywords{} 
\renewcommand{\tocsection}[3]{%
  \indentlabel{\@ifnotempty{#2}{\bfseries\ignorespaces#1 #2\quad}}\bfseries#3}
\renewcommand{\tocsubsection}[3]{%
  \indentlabel{\@ifnotempty{#2}{\ignorespaces#1 #2\quad}}#3}
\let\tocsubsubsection\tocsubsection% Update for \subsubsection
    \renewcommand{\tocsubsubsection}[3]{%
  \indentlabel{\@ifnotempty{#2}{\ignorespaces#1 #2\quad}}#3}
\newcommand\@dotsep{4.5}
\def\@tocline#1#2#3#4#5#6#7{\relax
  \ifnum #1>\c@tocdepth % then omit
  \else
    \par \addpenalty\@secpenalty\addvspace{#2}%
    \begingroup \hyphenpenalty\@M
    \@ifempty{#4}{%
      \@tempdima\csname r@tocindent\number#1\endcsname\relax
    }{%
      \@tempdima#4\relax
    }%
    \parindent\z@ \leftskip#3\relax \advance\leftskip\@tempdima\relax
    \rightskip\@pnumwidth plus1em \parfillskip-\@pnumwidth
    #5\leavevmode\hskip-\@tempdima{#6}\nobreak
    \leaders\hbox{$\m@th\mkern \@dotsep mu\hbox{.}\mkern \@dotsep mu$}\hfill
    \nobreak
    \hbox to\@pnumwidth{\@tocpagenum{\ifnum#1=1\bfseries\fi#7}}\par% <-- \bfseries for \section page
    \nobreak
    \endgroup
  \fi}
\renewcommand\csname r@tocindent0\endcsname{0pt}
\def\l@subsection{\@tocline{2}{0pt}{2.5pc}{5pc}{}}
\def\l@subsubsection{\@tocline{2}{0pt}{4.5pc}{5pc}{}}
\numberwithin{equation}{section} %This labels equations within the section
\newtheorem{theorem}{Theorem}[section]
\newtheorem{proposition}[theorem]{Proposition}
\newtheorem{lemma}[theorem]{Lemma}
\newtheorem{corollary}[theorem]{Corollary}
\newtheorem{remark}[theorem]{Remark}
\newtheorem{example}[theorem]{Example}
\theoremstyle{definition}
\newtheorem{definition}[theorem]{Definition}
\begin{document}
\pagenumbering{roman} 

\maketitle
\date{\today}

\begin{abstract}

In this paper we construct Witten instanton complexes on stratified pseudomanifolds with wedge metrics, for all choices of mezzo-perversities which classify the self-adjoint extensions of the Hodge Dirac operator. In this singular setting we introduce a generalization of the Morse-Bott condition and in so doing can consider a class of functions with certain non-isolated critical point sets which arise naturally in many examples. This construction of the instanton complex extends the Morse polynomial to this setting from which we prove the corresponding Morse inequalities.

This work proceeds by constructing Hilbert complexes and normal cohomology complexes, including those corresponding to the Witten deformed complexes for such critical point sets and all mezzo-perversities; these in turn are used to express local Morse polynomials as polynomial trace formulas over their cohomology groups.
Under a technical assumption of `flatness' on our Morse-Bott functions we then construct the instanton complex by extending the local harmonic forms to global quasimodes.
We also study the Poincar\'e dual complexes and in the case of self-dual complexes extract refined Morse inequalities generalizing those in the smooth setting. We end with a guide for computing local cohomology groups and Morse polynomials.

\end{abstract}

\tableofcontents
\pagenumbering{arabic}

%------------------------------
% body
%------------------------------
\section{Introduction}

Witten studied Morse theory and its relationship with physics in a number of articles in the early 80's \cite{witten1981dynamical,witten1982supersymmetry,witten1983fermion,witten1984holomorphic}, extending Morse inequalities and instanton complexes that categorify the Morse polynomials. In \cite{witten1982supersymmetry} he suggested an approach to study Morse inequalities for functions with non-isolated critical point sets, which was systematically developed by Bismut in \cite{bismut1986witten} for Morse-Bott functions. 

Morse inequalities on singular spaces have been explored for intersection cohomology \cite{goresky1988stratified,kirwan1988intersection} and for $L^2$ cohomology for certain choices of extensions of the de Rham complex in \cite{Jesus2018Wittens,Jesus2018Wittensgeneral}. Witten instanton complexes in the de Rham case were constructed under various conditions in \cite{ludwig2013analytic,ludwig2017index,ludwig2024morse,jayasinghe2023l2} where the function is assumed to have isolated critical points on the stratified pseudomanifold, and for the holomorphic case in \cite{jayasinghe2024holomorphic}.

Given a stratified pseudomanifold $\widehat{X}$ there is a corresponding resolved manifold with corners with iterated fibration structures $X$. For such spaces equipped with a wedge metric (roughly an iterated conic metric on) and a flat vector bundle $E$, the Hodge theory for twisted de Rham complexes $\mathcal{P}_W(X)=(L^2\Omega^{\cdot}(X;E), \mathcal{D}_W(d_E),d_E)$ was worked out in detail in \cite{Albin_hodge_theory_cheeger_spaces,Albin_hodge_theory_on_stratified_spaces} (c.f. \cite{albin2017novikov}) including a classification of self-adjoint extensions of the Hodge de Rham Dirac operator in terms of \textit{mezzo-perversities}.
In this article we construct the de Rham Witten instanton complexes for what we call \textit{flat stratified Morse-Bott functions} corresponding to all such self-adjoint extensions. 

\textit{As far as we are aware this article contains the first construction of Witten instanton complexes for stratified pseudomanifolds with non-isolated critical points.} One of the important contributions we make is the construction of various local Hilbert complexes for different types of critical point sets of a given \textbf{stratified Morse-Bott function} $h$. 
We consider three types of connected components of critical point sets $\widehat{F_a}$ in Subsection \ref{subsection_stratified_Morse_Bott_functions}. Those that are smoothly embedded in the regular part of $\widehat{X}$, those that are smoothly embedded in a singular stratum of $\widehat{X}$, and those that are stratified spaces themselves, with the regular part being a sub-manifold of a stratum of $\widehat{X}$, with a compatibility of the Thom-Mather data of $\widehat{F_a}$ and $\widehat{X}$. 
We call the latter two as type I and type II stratified non-degenerate critical point sets respectively.

In addition to tubular neighborhoods of $\widehat{F_a}$ in $\widehat{X}$ given by the Thom-Mather data, we assume that there are \textit{fundamental neighbourhoods} $U(\widehat{F_a})$ that depend on the gradient flow of the function $h$ in the neighborhood which refines the stratification in a neighbourhood. In general, these fundamental neighbourhoods are not actual neighbourhoods of the critical point sets (see Remark \ref{Remark_not_a_neighbourhood}), but are spaces constructed by modifying dense subsets of neighbourhoods where technically complicated measure zero sets of actual neighbourhoods of critical point sets are avoided or modified. Our philosophy for studying $L^2$ invariants is to understand the space of sections on such dense subsects of spaces, and take suitable graph closures of the operators being studied, obtaining domains suitable for the analytic constructions we do.
We emphasize that we do not demand any fiber bundle structure on $U(\widehat{F_a})$, since any such notion would carry with it additional complications since the candidate base is stratified. We follow the philosophy that for the purposes of studying $L^p$ cohomology on spaces, one only has to understand the space of sections on a dense set, and find domains for the operators by taking appropriate (ideal) boundary data. In studying local invariants corresponding to global domains, we will consider restrictions of sections in the global domains to the dense sets and define local domains using appropriate graph closures. This is done on a set which corresponds to a resolution of the above singular fundamental neighbourhoods that we define next.

We emphasize here that the nomenclature of (resolved) fundamental neighbourhood is an abuse of language since the set we define is not strictly a neighbourhood of the fixed point set $F_a$ in the case of stratified critical point sets of Type II.

In particular we demand that there is a fiber bundle over the regular part of each $\widehat{F_a}$ which is dense in the fundamental neighbourhood. If the fiber bundle over the regular part of $\widehat{F_a}$ is flat for each such connected component, we say that $h$ is a \textbf{flat stratified Morse-Bott function} and this allows us to pick a \textbf{local model metric} near $F_a$ where the fiber metrics are isometric, and extend it to a \textbf{model wedge metric} on the entire space (see Definition \ref{Definition_flatness_assumption}, Proposition \ref{Proposition_model_metric_existence} and accompanying remarks), simplifying the spectral theory. Since $L^2$ cohomology is isomorphic for all wedge metrics, this suffices for the study of many interesting problems related to $L^2$ cohomology.

The Witten instanton complex $\mathcal{P}_{W,inst}(X)$ is constructed by introducing a deformed operator $d_\varepsilon=d_E+\varepsilon dh \wedge$ and a deformed complex $\mathcal{P}_{W,\varepsilon}(X)$ in the setting described above. The associated deformed Laplace-type operator $\Delta_\varepsilon$ has a sub complex given by eigensections corresponding to \textit{small-enough} eigenvalues, earning this the alternate name of small eigenvalue complex.
We introduce \textit{local Hilbert complexes} $\mathcal{P}_{W,B}(U(F_a))=(L^2\Omega^{\cdot}(U(F_a);E), \mathcal{D}_{W,B}(P_{U(F_a)}),P_{U(F_a)})$ and \textit{normal cohomology complexes} $\mathcal{R}_{W,B}(U(F_a))$ for \textit{fundamental neighbourhoods} of connected components of critical points sets $F_a$, where $\cup_{a \in \mathcal{I}} F_a$ is the critical point set of $h$, and construct Witten deformations of these complexes as well. These normal complexes are roughly the forms on $F$ twisted by the system of local coefficients corresponding to deformed cohomology groups on the normal fibers, up to functional analytic subtleties. 
The associated deformed Laplace-type operator is denoted $\Delta_{\varepsilon}$ and the sections of the Witten instanton complex are constructed using the harmonic sections of these local complexes over all critical points. The normal cohomology complex is used to understand the spectrum of the local Hilbert complex, and we introduce these in detail in Subsection \ref{subsubsubsection_Neumann_boundary_condition}.
The following is the main result and is a version of Theorem \ref{theorem_small_eig_complex}.

\begin{theorem}[de Rham Witten instanton complex]
\label{theorem_small_eig_complex_intro_version}
Let $X$ be a resolution of a stratified pseudomanifold of dimension $n$ equipped with a model wedge metric and a flat stratified Morse-Bott function $h$. Let $E$ be a flat vector bundle on $X$, and let $\mathcal{P}_W(X)=(L^2\Omega^{\cdot}(X;E), \mathcal{D}_W(d_E),d_E)$ be a twisted de Rham complex on $X$ with a choice of mezzo-perversity $W$. %, with adjoint complex $\mathcal{Q}_W(X)$.

For any integer $0 \leq k \leq n$, let $
\mathrm{K}_{W,\varepsilon, k}^{[0, c]} \subset L^2\Omega^k(X;E)$ denote the vector space generated by the eigenspaces of $\Delta_{\varepsilon}$ associated with eigenvalues in $[0, c]$. There exist constants $c>0$ and $\varepsilon_0>0$ such that $\dim \mathrm{K}_{W,\varepsilon, k}^{[0, c]} =\dim \sum_{a \in \mathcal{I}} \mathcal{H}^k(\mathcal{P}_{W,B}(U(F_a))$ when $\varepsilon>\varepsilon_0$, and together form a finite dimensional sub complex,
\begin{equation}
    \label{small_eigenvalue_complex_intro}
\mathcal{P}_{W,inst}(X):=(\mathrm{K}_{W,\varepsilon, k}^{[0, c]}, P_{\varepsilon}): 0 \longrightarrow \mathrm{K}_{W,\varepsilon, 0}^{[0, c]} \stackrel{P_{\varepsilon}}{\longrightarrow} \mathrm{K}_{W,\varepsilon, 1}^{[0, c]} \stackrel{P_{\varepsilon}}{\longrightarrow} \cdots \stackrel{P_{\varepsilon}}{\longrightarrow} \mathrm{K}_{W,\varepsilon, n}^{[0, c]} \longrightarrow 0
\end{equation}
of $\mathcal{P}_{W,\varepsilon}(X)$ which is quasi-isomorphic to it.
\end{theorem}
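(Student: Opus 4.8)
The plan is to adapt the Witten--Bismut deformation argument to the wedge setting, in four stages: an Agmon/IMS localization of small eigenvalue eigensections near the critical set, together with an upper bound on their number; the identification of the local small eigenvalue spaces with the cohomology of the local Hilbert complexes via a normal model analysis; a quasimode construction producing the matching lower bound; and the subcomplex and quasi-isomorphism properties, which come from strong Hodge theory for the Hilbert complex.

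First I would write $\Delta_{\varepsilon} = \Delta_0 + \varepsilon\,\mathcal{A}_h + \varepsilon^2 |dh|^2$, where $\Delta_0$ is the Hodge Laplacian of the model wedge metric on $\mathcal{P}_W(X)$ and $\mathcal{A}_h$ is the bundle endomorphism built from the covariant Hessian of $h$. Since $h$ is bounded, the conjugation identity $d_\varepsilon = e^{-\varepsilon h} d_E e^{\varepsilon h}$ shows that $\mathcal{P}_{W,\varepsilon}(X)$ is again a Fredholm Hilbert complex whose deformed Laplacian $\Delta_\varepsilon$ has discrete spectrum, so $\mathrm{K}_{W,\varepsilon, k}^{[0, c]}$ is finite dimensional for every fixed $c>0$. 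On the complement of any fixed neighbourhood of $\bigcup_{a\in\mathcal{I}} F_a$ one has $|dh|^2 \geq \delta > 0$, hence $\Delta_\varepsilon \geq \varepsilon^2\delta - C\varepsilon$ there; an IMS localization with a partition of unity subordinate to the fundamental neighbourhoods $U(F_a)$ then shows that an eigensection of $\Delta_\varepsilon$ with eigenvalue in $[0,c]$ is, modulo an $L^2$ error that is exponentially small in $\varepsilon$, concentrated in $\bigcup_a U(F_a)$, and it yields $\dim \mathrm{K}_{W,\varepsilon, k}^{[0, c]} \leq \sum_{a\in\mathcal{I}} \dim \mathrm{K}^{[0,c]}_{W,\varepsilon,k}(U(F_a))$, where $\mathrm{K}^{[0,c]}_{W,\varepsilon,k}(U(F_a))$ denotes the span of the eigenspaces with eigenvalue in $[0,c]$ of the deformed Laplacian of $\mathcal{P}_{W,B}(U(F_a))$. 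The point requiring care here is the compatibility of the local domain conditions $\mathcal{D}_{W,B}$ with the global domain $\mathcal{D}_W$ --- that restriction of a global section to $U(F_a)$ lands in the local domain, and that multiplication by a cutoff sends the local domain into the global one --- which uses that $W$ induces on $U(F_a)$ the mezzo-perversity defining $\mathcal{P}_{W,B}(U(F_a))$.

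Next I would analyze the deformed Laplacian near each $F_a$. Under the flatness hypothesis the model wedge metric has isometric normal fibre metrics, so the local deformed Laplacian splits, after conjugation and up to terms of lower order in $\varepsilon$, as a tangential Hodge Laplacian along $F_a$ plus a normal model operator which is itself a Witten-deformed Hodge Laplacian on the (conic) normal fibres with boundary condition $B$. The normal model operator has a spectral gap of size at least $c_0\varepsilon$ above its kernel, and that kernel, viewed as a bundle of finite dimensional spaces over $F_a$, is exactly the system of local coefficients of the normal cohomology complex $\mathcal{R}_{W,B}(U(F_a))$; feeding this into the tangential part identifies the local small eigenvalue cohomology and gives $\dim \mathrm{K}^{[0,c]}_{W,\varepsilon,k}(U(F_a)) = \dim \mathcal{H}^k(\mathcal{P}_{W,B}(U(F_a)))$ for $\varepsilon$ large, which together with the previous step is the upper bound $\dim \mathrm{K}_{W,\varepsilon, k}^{[0, c]} \leq \sum_{a\in\mathcal{I}} \dim \mathcal{H}^k(\mathcal{P}_{W,B}(U(F_a)))$. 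For the reverse inequality I would build quasimodes: given a harmonic representative $\omega_a \in \mathcal{H}^k(\mathcal{P}_{W,B}(U(F_a)))$, extend it off $F_a$ by the normal model harmonic form, multiply by a cutoff $\chi_a$ supported in $U(F_a)$, and obtain $\widetilde{\omega}_a \in \mathcal{D}_W(\Delta_\varepsilon)$ with $\|\Delta_\varepsilon \widetilde{\omega}_a\| \leq C e^{-c_1\varepsilon}\|\widetilde{\omega}_a\|$; it is precisely the flatness assumption that keeps the truncation error this small, since once the fibre metrics are isometric the local harmonic form is an honest approximate global eigensection. The $\widetilde{\omega}_a$ are nearly orthonormal, so for $\varepsilon$ large the spectral projection of $\Delta_\varepsilon$ onto $\mathrm{K}_{W,\varepsilon,\bullet}^{[0,c]}$ is injective on their span, which gives the reverse inequality and hence the claimed dimension equality, with $\varepsilon_0$ fixed so that $Ce^{-c_1\varepsilon} < c < c_0\varepsilon$ for $\varepsilon > \varepsilon_0$.

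Finally, $d_\varepsilon$ commutes with $\Delta_\varepsilon$ and hence preserves its eigenspaces and therefore $\mathrm{K}_{W,\varepsilon,\bullet}^{[0,c]}$, so $\mathcal{P}_{W,inst}(X) = (\mathrm{K}_{W,\varepsilon,\bullet}^{[0,c]}, P_\varepsilon)$, with $P_\varepsilon$ the restriction of $d_\varepsilon$, is a finite dimensional subcomplex of $\mathcal{P}_{W,\varepsilon}(X)$. Strong Hodge theory for the Fredholm Hilbert complex $\mathcal{P}_{W,\varepsilon}(X)$ gives an orthogonal splitting of it into the subcomplex spanned by eigensections with eigenvalue in $[0,c]$ and the one spanned by eigensections with eigenvalue in $(c,\infty)$; on the latter $\Delta_\varepsilon$ is invertible, so $d_\varepsilon + d_\varepsilon^*$ is invertible and the subcomplex is acyclic, whence the inclusion $\mathcal{P}_{W,inst}(X) \hookrightarrow \mathcal{P}_{W,\varepsilon}(X)$ induces an isomorphism on cohomology, i.e.\ is a quasi-isomorphism. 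I expect the main obstacle to be the normal model analysis near the stratified critical point sets of type I and type II: establishing the $c_0\varepsilon$ spectral gap of the normal Witten-deformed Hodge Laplacian on the wedge or cone fibres for the prescribed self-adjoint extension $B$, verifying that the algebraic structure of its kernel is genuinely the normal cohomology complex $\mathcal{R}_{W,B}(U(F_a))$, and keeping the quasimode truncation error small (ideally exponentially, at worst super-polynomially) despite the conic degeneration of the metric --- which is exactly what the flatness hypothesis and the model wedge metric are introduced to control.
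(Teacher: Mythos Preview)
Your proposal is correct and follows essentially the same strategy as the paper: the Bochner formula, the spectral gap for the normal model operator (your $c_0\varepsilon$ gap is the paper's Proposition on model equations), the quasimode construction by cutting off local harmonic forms, and the subcomplex/quasi-isomorphism conclusion from $P_\varepsilon$ commuting with $\Delta_\varepsilon$ together with the Kodaira decomposition are all exactly what the paper does.

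The only notable difference is in how the dimension equality is packaged. You organize it as an IMS localization upper bound plus a quasimode lower bound. The paper instead follows Zhang's specific technique: it defines $E_\varepsilon$ as the span of the cutoff local harmonic forms, writes $\mathcal{D}(D_\varepsilon) = E_\varepsilon \oplus E_\varepsilon^\perp$, splits $D_\varepsilon$ into the four pieces $D_{\varepsilon,i} = \Pi_\varepsilon^{(\perp)} D_\varepsilon \Pi_\varepsilon^{(\perp)}$, proves $\|D_{\varepsilon,1}\|, \|D_{\varepsilon,2}\|, \|D_{\varepsilon,3}\| \leq \tfrac{1}{2\varepsilon}$ and $\|D_{\varepsilon,4} s\| \geq C_0\|s\|$ on $E_\varepsilon^\perp$, and then uses a contour-integral/resolvent argument to show the spectral projection $\Pi_{\varepsilon,c}$ is $O(1/\varepsilon)$-close to the projection onto $E_\varepsilon$. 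Your IMS argument is morally the content of the lower bound $\|D_{\varepsilon,4} s\| \geq C_0\|s\|$ (whose proof in the paper is itself an IMS-type estimate using the cutoff $\widetilde\gamma$), and your quasimode injectivity is what the resolvent lemma delivers. The Zhang packaging has the advantage that both inequalities come out of one estimate, avoiding the need to separately compare global small-eigenvalue eigensections to local ones; your more classical IMS formulation is perhaps more transparent but requires a little more care at the step where you pass from ``eigensection concentrates in $\bigcup_a U(F_a)$'' to the actual dimension inequality against the local small-eigenvalue spaces with the correct boundary condition $B$.
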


\begin{definition}
\label{Definition_Morse_and_Poincare_polynomials}
In the setting of Theorem \ref{theorem_small_eig_complex_intro_version} we define the \textbf{Morse polynomial} for a complex $\mathcal{P}_W(X)$ and a flat stratified Morse-Bott function $h$ by
\begin{equation}
     M(\mathcal{P}_W(X),h)(b):=\sum_{k=0}^n b^k \dim \mathrm{K}_{W,\varepsilon, k}^{[0, c]}
\end{equation}
for small $c$ and large $\varepsilon$ as in the theorem above.
Given any Fredholm complex $\mathcal{S}$, we define the \textbf{Poincar\'e polynomial} for the complex by
\begin{equation}
    P(\mathcal{S})(b):=\sum_{k=0}^n b^k \dim(\mathcal{H}^{k}(\mathcal{S})).
\end{equation}
Given a local Hilbert complex on a fundamental neighbourhood of a component of the critical point set, we define the \textbf{\textit{local} Morse polynomial} to be the Poincar\'e polynomial of the local complex.
\end{definition}

It is easy to see from Theorem \ref{theorem_small_eig_complex_intro_version} that 
\begin{equation}
    M(\mathcal{P}_W(X),h)(b)=\sum_{a \in \mathcal{I}} P(\mathcal{P}_{W,B}(U(F_a)))=\Big( \sum_{a \in \mathcal{I}}  \sum_{k=0}^n b^k \dim(\mathcal{H}^{k}(\mathcal{P}_{W,B}(U(F_a))) \Big)
\end{equation}
and we can formulate the strong version of the (polynomial) Morse inequalities as follows.

\begin{theorem}
\label{Theorem_strong_Morse_Bott_de_Rham_intro_version}
[Polynomial Morse inequalities]
In the setting of Theorem \ref{theorem_small_eig_complex_intro_version}, 
there exist non-negative integers $Q_0,..., Q_{n-1}$ such that
\begin{equation}
    \label{Morse_Bott_inequality_de_Rham_dimension_cohomology_intro_version}
    M(\mathcal{P}_W(X),h)(b)=  P(\mathcal{P}_W(X))(b) + (1+b) \sum_{k=0}^{n-1} Q_k b^k.
\end{equation}
\end{theorem}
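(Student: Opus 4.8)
The plan is to deduce the identity \eqref{Morse_Bott_inequality_de_Rham_dimension_cohomology_intro_version} from Theorem \ref{theorem_small_eig_complex_intro_version} together with the standard rank count for a finite cochain complex. By Theorem \ref{theorem_small_eig_complex_intro_version}, for $c$ small and $\varepsilon>\varepsilon_0$ the instanton complex $\mathcal{P}_{W,inst}(X)=(\mathrm{K}_{W,\varepsilon,\bullet}^{[0,c]},P_\varepsilon)$ is a cochain complex of finite-dimensional vector spaces with $\mathrm{K}_{W,\varepsilon,k}^{[0,c]}$ in degree $k$, and it is quasi-isomorphic to the deformed complex $\mathcal{P}_{W,\varepsilon}(X)$. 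First I would pin down its cohomology. Since $d_\varepsilon = e^{-\varepsilon h}\,d_E\,e^{\varepsilon h}$ with $h$ bounded on the compact pseudomanifold $\widehat{X}$, multiplication by $e^{-\varepsilon h}$ is a bounded, boundedly invertible chain map from $(L^2\Omega^\bullet(X;E),d_E)$ to $(L^2\Omega^\bullet(X;E),d_\varepsilon)$, and by construction of the deformed complex it carries $\mathcal{P}_W(X)$ isomorphically onto $\mathcal{P}_{W,\varepsilon}(X)$ (the ideal boundary conditions cut out by $W$ are preserved because $e^{\pm\varepsilon h}$ is smooth and nonvanishing up to the singular strata, together with all derivatives tangential to the boundary hypersurfaces of the resolution). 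Hence $H^k(\mathcal{P}_{W,inst}(X))\cong H^k(\mathcal{P}_{W,\varepsilon}(X))\cong H^k(\mathcal{P}_W(X))$ for every $k$. Because $\mathcal{P}_W(X)$ is a Fredholm Hilbert complex --- which is precisely what makes $P(\mathcal{P}_W(X))$ well defined, see \cite{Albin_hodge_theory_cheeger_spaces,Albin_hodge_theory_on_stratified_spaces} --- its unreduced cohomology has closed range and is isomorphic to the harmonic space, so $\dim H^k(\mathcal{P}_{W,inst}(X)) = \dim\mathcal{H}^k(\mathcal{P}_W(X))$, the $k$-th coefficient of $P(\mathcal{P}_W(X))(b)$.

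Next I would invoke the elementary lemma on finite cochain complexes: if $0\to C^0\xrightarrow{\delta_0}C^1\xrightarrow{\delta_1}\cdots\xrightarrow{\delta_{n-1}}C^n\to 0$ is a complex of finite-dimensional vector spaces and $r_k:=\operatorname{rank}\delta_k$ (with $r_{-1}=r_n=0$), then $\dim C^k=\dim\ker\delta_k+r_k$ and $\dim H^k(C^\bullet)=\dim\ker\delta_k-r_{k-1}$ give $\dim C^k=\dim H^k(C^\bullet)+r_{k-1}+r_k$, whence
\begin{equation*}
\sum_{k=0}^{n}b^k\dim C^k \;=\; \sum_{k=0}^{n}b^k\dim H^k(C^\bullet) \;+\; (1+b)\sum_{k=0}^{n-1}r_k\,b^k .
\end{equation*}
I apply this with $C^\bullet=\mathcal{P}_{W,inst}(X)$ and $Q_k:=\operatorname{rank}\big(P_\varepsilon\colon \mathrm{K}_{W,\varepsilon,k}^{[0,c]}\to\mathrm{K}_{W,\varepsilon,k+1}^{[0,c]}\big)\in\mathbb{Z}_{\geq 0}$. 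By Definition \ref{Definition_Morse_and_Poincare_polynomials} the left-hand side is $M(\mathcal{P}_W(X),h)(b)$ and, by the previous paragraph, the first term on the right is $P(\mathcal{P}_W(X))(b)$; this is exactly \eqref{Morse_Bott_inequality_de_Rham_dimension_cohomology_intro_version}. The $Q_k$ are independent of the auxiliary $c,\varepsilon$: the recursion $r_0=\dim C^0-\dim H^0$, $r_k=\dim C^k-\dim H^k-r_{k-1}$ determines them from the integers $\dim\mathrm{K}_{W,\varepsilon,k}^{[0,c]}=\sum_{a\in\mathcal{I}}\dim\mathcal{H}^k(\mathcal{P}_{W,B}(U(F_a)))$ and $\dim\mathcal{H}^k(\mathcal{P}_W(X))$, both independent of these choices.

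Granting Theorem \ref{theorem_small_eig_complex_intro_version} as permitted, the argument above is routine, and the genuine analytic content all sits inside that theorem. The step I expect to be the main obstacle --- discharged there rather than here --- is the quasi-isomorphism $\mathcal{P}_{W,inst}(X)\simeq\mathcal{P}_{W,\varepsilon}(X)$: one needs a spectral gap above $[0,c]$ for $\Delta_\varepsilon$ that is uniform in $\varepsilon>\varepsilon_0$, so that the spectral projection onto $[0,c]$ is a bounded chain map inducing an isomorphism on cohomology, and one needs this for the self-adjoint extension $\mathcal{D}_W$ singled out by the mezzo-perversity $W$. The only point to verify at the present level is the compatibility of multiplication by $e^{-\varepsilon h}$ with the domain $\mathcal{D}_W(d_E)$; since $h$ is a flat stratified Morse-Bott function it is smooth and bounded on the resolution together with its boundary-tangential derivatives, so $e^{-\varepsilon h}$ is an order-zero operator preserving the weighted mapping properties that encode the ideal boundary conditions, and the identification $\mathcal{P}_W(X)\cong\mathcal{P}_{W,\varepsilon}(X)$ follows.
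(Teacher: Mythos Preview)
Your proof is correct and follows essentially the same approach as the paper. The paper phrases the elementary step via its polynomial Lefschetz heat supertrace formula (Theorem \ref{Lefschetz_supertrace}) applied to the instanton complex and then sends $t\to 0$, which at $t=0$ reduces precisely to your rank--nullity identity $\dim C^k=\dim H^k+r_{k-1}+r_k$; your $Q_k=\operatorname{rank}(P_\varepsilon|_{\mathrm{K}^{[0,c]}_{W,\varepsilon,k}})$ is exactly the paper's ``dimension of the co-exact small eigenvalue eigensections.''
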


This is a version of Theorem \ref{Theorem_strong_Morse_Bott_de_Rham} and generalizes the Morse inequalities to the introduced setting.
The vanishing of the \textbf{error polynomial} $(1+b) \sum_{k=0}^{n-1} Q_k b^k$ when we set $b=-1$ corresponds to a Lefschetz fixed point theorem for the self-map induced by the gradient flow of the function $h$.

Given a complex $\mathcal{P}_W(X)$, there is an \textbf{adjoint complex} 
$\mathcal{Q}_{W^{\perp}}(X)=\big(L^2\Omega^{\cdot}(X;E), \mathcal{D}_{W^{\perp}}(\delta_E),\delta_E\big)$ and an \textbf{adjoint Poincar\'e dual complex} $\mathcal{Q}_{\star W}(X)=(L^2\Omega^{\cdot}(X;E), \mathcal{D}_{\star W}(\delta_E),\delta_E)$, and while these are equal in the smooth setting, they are only equal to each other if the mezzo-perversity is a \textbf{self-dual mezzo perversity} ($\star W=W^{\perp}$, or $W=\star W^{\perp}$), and we introduce these notions in detail in Section \ref{section_Hilbert_complexes_and_cohomology}. This is certainly the case on \textbf{Witt spaces} where the Hodge Laplacian is essentially self-adjoint and there is a unique mezzo perversity $W$.

\begin{remark}[Notation and grading convention]
\label{Remark_Notation_and_grading_P_vs_Q}
In this article we will use the notation $\mathcal{P}$ and $\mathcal{Q}$ (with additional decorations), to denote the complexes corresponding to the operators $d$ and $\delta$ respectively. The grading for sections in the $\mathcal{P}$ complexes correspond to the form degrees while the grading for forms of degree $k$ in the $\mathcal{Q}$ complexes as Hilbert complexes correspond to $n-k$ where $n$ is the dimension of the space. \textbf{However we will use the grading given by the form degree for both complexes when constructing Morse and Poincar\'e polynomials.} This is to be consistent with topological versions of Morse inequalities appearing in the literature.
\end{remark}
With this convention it is easy to check that $b^nM(\mathcal{P}_W(X),h)(b^{-1})=M(\mathcal{Q}_{W^{\perp}}(X),-h)(b)$ (see Remark 
\ref{Remark_on_adjoint_inequalities_proof}). In addition to Morse inequalities coming from the adjoint complex, we have those corresponding to the Poincar\'e dual complex 
\begin{multline}
\label{Dual_Morse_Bott_inequality_de_Rham_dimension_cohomology_intro_version}
     M(\mathcal{Q}_{\star W}(X),h)(b)=\Big( \sum_{a \in \mathcal{I}}  \sum_{k=0}^n b^k \dim \mathcal{H}^{k}(\mathcal{Q}_{\star W, \star B}(U(F_a))) \Big) \\= \sum_{k=0}^n b^k \dim(\mathcal{H}^{k}(\mathcal{Q}_{\star W}(X))) + (1+b) \sum_{k=0}^{n-1} Q'_{k} b^{k}.
\end{multline}
It is well known in the \textbf{smooth} setting that $b^nM(\mathcal{P}_W(X),h)(b^{-1})=M(\mathcal{P}_W(X),-h)(b)$ and that this is a consequence of Poincar\'e duality and yields more information, but now this only holds for self-dual mezzo perversities (i.e. when $W=\star W^{\perp}$) as we formalize in Corollary \ref{corollary_Poincare_dual_inequalities_intro_version} (see Example \ref{example_breakdown_refined_inequalities} for a case where the relation fails when the mezzo-perversity is not self-dual). We define refined Morse polynomials as follows. 

\begin{definition}[Refined Morse polynomials]
\label{Definition_refined_Morse_polynomials}
Given two polynomials $M_1(b)=\sum_k M_{1,k}b^k$ and $M_2(b)=\sum_k M_{2,k}b^k$, where the coefficients $M_{1,k}, M_{2,k}$ are non-negative integers, we define the \textbf{minimal polynomial} $M_{\min}(b):=\sum_k M_{\min,k}b^k$ where $M_{\min,k}:=\min \{M_{1,k}, M_{2,k} \}$.

Let us assume that there is a self-dual mezzo-perversity $W$. Then we define the \textbf{\textit{refined Morse polynomial}} $M_{re}(\mathcal{P}_W(X),h,b)$ to be the minimal polynomial of $M(\mathcal{P}_W(X),h)(b)$ and $M(\mathcal{P}_W(X),-h)(b)$.
\end{definition}

The following corollary generalizes the well known result of Poincar\'e duality for Morse polynomials in the smooth setting. Now however there is a complication in its utility since the dual complex is not isomorphic to the adjoint complex in general. The following is a version of Corollary \ref{corollary_Poincare_dual_inequalities}.

\begin{corollary}[Refined inequalities for self-dual mezzo perversities]
\label{corollary_Poincare_dual_inequalities_intro_version}
In the same setting as Theorem \ref{theorem_small_eig_complex_intro_version}, 

if the mezzo-perversity $W$ is self-dual, we have that
\begin{equation}
    \label{equation_Poincare_duality_for_polynomials_intro_version}
    M(\mathcal{P}_W(X),h)(b)=b^nM(\mathcal{P}_W(X),-h)(b^{-1}), %=M(\mathcal{Q}_{\star W}(X),h)(b).
\end{equation}
and there is a \textbf{refined polynomial Morse inequality}
\begin{equation}
\label{equation_refined_inequalities_intro_version}
    M_{re}(\mathcal{P}_W(X),h,b)- \sum_{k=0}^n b^k \dim(\mathcal{H}^{k}(\mathcal{P}_W(X))) = \sum_{k=0}^{n} \overline{Q}_{k} b^{k}
\end{equation}
where $\overline{Q}_{k}$ are non-negative integers.
\end{corollary}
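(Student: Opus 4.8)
The statement packages two assertions: the polynomial Poincar\'e--duality identity \eqref{equation_Poincare_duality_for_polynomials_intro_version} and the refined inequality \eqref{equation_refined_inequalities_intro_version}. The plan is to establish \eqref{equation_Poincare_duality_for_polynomials_intro_version} first, since it is the substantive point, and then to deduce \eqref{equation_refined_inequalities_intro_version} formally from it together with the strong Morse inequalities of Theorem \ref{Theorem_strong_Morse_Bott_de_Rham} applied to both $h$ and $-h$. The whole argument is the singular incarnation of the classical fact that the Hodge star exchanges the Witten complex of $h$ with that of $-h$; the only place the self--duality hypothesis is used is that $W=\star W^{\perp}$ is exactly the condition under which the Poincar\'e--dual complex $\mathcal{Q}_{\star W}(X)$ equals the adjoint complex $\mathcal{Q}_{W^{\perp}}(X)$ of $\mathcal{P}_W(X)$ (and likewise for every local complex), so that the $\star$--image of $\mathcal{P}_W(X)$ can be converted back into a statement about $\mathcal{P}_W(X)$ itself. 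For a non--self--dual mezzo--perversity that last step is illegitimate and \eqref{equation_Poincare_duality_for_polynomials_intro_version} genuinely fails, as in Example \ref{example_breakdown_refined_inequalities}.

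For \eqref{equation_Poincare_duality_for_polynomials_intro_version} I would use the Hodge star $\star$ for the model wedge metric together with the pairing on the flat bundle $E$. It is a grading--reversing $L^2$--isometry $L^2\Omega^k(X;E)\to L^2\Omega^{n-k}(X;E)$ with $\star d_E\star^{-1}=\pm\delta_E$ and $\star(dh\wedge)\star^{-1}=\pm\iota_{\nabla h}$, and it commutes with multiplication by $e^{\pm\varepsilon h}$; hence it conjugates the deformed operator $d_{\varepsilon}=d_E+\varepsilon\,dh\wedge$ and the deformed Hodge Laplacian $\Delta_{\varepsilon}$ attached to $(\mathcal{P}_W(X),h)$ to the corresponding $(-h)$--deformed objects attached to $\mathcal{Q}_{\star W}(X)$. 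By the construction of $\star W$ it sends $\mathcal{D}_W(d_E)$ onto $\mathcal{D}_{\star W}(\delta_E)$, and on each fundamental neighbourhood it sends $\mathcal{P}_{W,B}(U(F_a))$ onto $\mathcal{Q}_{\star W,\star B}(U(F_a))$, so $\star$ restricts to an isomorphism, reversing form degree $k\mapsto n-k$, of the finite--dimensional instanton complexes and of all the local Hilbert complexes; reading off dimensions gives
\[
M(\mathcal{Q}_{\star W}(X),h)(b)=b^n M(\mathcal{P}_W(X),-h)(b^{-1}),\qquad \dim\mathcal{H}^k(\mathcal{Q}_{\star W}(X))=\dim\mathcal{H}^{n-k}(\mathcal{P}_W(X)).
\]
Now invoke self--duality: $\mathcal{Q}_{\star W}(X)=\mathcal{Q}_{W^{\perp}}(X)$ is the adjoint complex of $\mathcal{P}_W(X)$ and $\star B=B^{\perp}$ locally. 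Feeding this into the dual Morse inequalities \eqref{Dual_Morse_Bott_inequality_de_Rham_dimension_cohomology_intro_version} and using the adjoint relations of Remark \ref{Remark_on_adjoint_inequalities_proof} expresses $M(\mathcal{Q}_{\star W}(X),h)(b)$, its cohomology term, and its error polynomial in terms of the corresponding data of $\mathcal{P}_W(X)$ with $-h$; comparing with the strong inequality \eqref{Morse_Bott_inequality_de_Rham_dimension_cohomology_intro_version} for $\mathcal{P}_W(X)$ with $-h$ and with the two displayed identities then yields $M(\mathcal{P}_W(X),h)(b)=b^nM(\mathcal{P}_W(X),-h)(b^{-1})$, which is \eqref{equation_Poincare_duality_for_polynomials_intro_version}, and en route $P(\mathcal{P}_W(X))(b)=b^nP(\mathcal{P}_W(X))(b^{-1})$.

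For \eqref{equation_refined_inequalities_intro_version} I would apply the strong polynomial Morse inequalities \eqref{Morse_Bott_inequality_de_Rham_dimension_cohomology_intro_version} to $h$ and to $-h$ separately. Writing $M_k(\mathcal{P}_W(X),\pm h)$ for the coefficient of $b^k$: since each error polynomial $(1+b)\sum_k Q_kb^k$ has non--negative coefficients, comparing coefficients of $b^k$ gives the weak inequalities $M_k(\mathcal{P}_W(X),h)\ge\dim\mathcal{H}^k(\mathcal{P}_W(X))$ and $M_k(\mathcal{P}_W(X),-h)\ge\dim\mathcal{H}^k(\mathcal{P}_W(X))$ for all $k$. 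Hence the coefficientwise minimum satisfies $M_{re,k}\ge\dim\mathcal{H}^k(\mathcal{P}_W(X))$, so $\overline{Q}_k:=M_{re,k}-\dim\mathcal{H}^k(\mathcal{P}_W(X))$ is a non--negative integer, which is exactly \eqref{equation_refined_inequalities_intro_version}. (Here the self--duality hypothesis is also what makes $M_{re}$ well defined through Definition \ref{Definition_refined_Morse_polynomials}, while \eqref{equation_Poincare_duality_for_polynomials_intro_version} guarantees that $M(\mathcal{P}_W(X),h)$ and $M(\mathcal{P}_W(X),-h)$ are reciprocal to one another, so that passing to the coefficientwise minimum is the correct way to record the sharper of the two families of inequalities degree by degree.)

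The main obstacle is the first step in the proof of \eqref{equation_Poincare_duality_for_polynomials_intro_version}: showing that the Hodge star genuinely descends to an isomorphism of the instanton complexes and, more delicately, of every local Hilbert complex $\mathcal{P}_{W,B}(U(F_a))$, while keeping track simultaneously of the form--degree reversal, of the sign change $h\mapsto -h$ with its reversal of the gradient flow (hence of the fundamental neighbourhoods and of the local Morse data across all three types of critical components), and of the identification $\star\mathcal{D}_W=\mathcal{D}_{\star W}$ of domains and local boundary conditions. The individual ingredients---the classification of self--adjoint extensions by mezzo--perversities, the explicit local model metrics near the $F_a$, and the stability of the small--eigenvalue construction under the zeroth order Witten term---are supplied by the body of the paper; assembling them compatibly, and isolating precisely why that compatibility forces $\star W=W^{\perp}$, is where the real work lies.
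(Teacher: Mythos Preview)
Your proposal is correct and follows essentially the same approach as the paper. The paper's own proof is extremely terse: it invokes the adjoint relation $b^nM(\mathcal{P}_W(X),h)(b^{-1})=M(\mathcal{Q}_{W^{\perp}}(X),-h)(b)$ from Remark \ref{Remark_on_adjoint_inequalities_proof} (valid for any mezzo-perversity) and observes that self-duality $W=\star W^{\perp}$ collapses this to \eqref{equation_Poincare_duality_for_polynomials_intro_version}; for the refined inequality it simply notes that the Poincar\'e polynomial is dominated coefficientwise by both Morse polynomials, so the minimum still dominates it. Your argument unpacks the Hodge--star mechanism behind that adjoint relation explicitly and in the opposite order (first deriving a relation for $\mathcal{Q}_{\star W}$, then using $\star W=W^\perp$ to land back in the adjoint complex), and your derivation of \eqref{equation_refined_inequalities_intro_version} via the weak inequalities is exactly what the paper's one-line justification means. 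The ``main obstacle'' you flag---that $\star$ respects the domains and local boundary conditions---is precisely what the paper builds into its definition of the Poincar\'e--dual complex in Subsection \ref{Subsection_Poincare_Dual_complexes}, so you are being appropriately careful rather than identifying a gap.
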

We note that there is no $(1+b)$ factor on the right hand side as in the standard Morse inequalities (see Example \ref{Example_two_horned_torus}).

In the smooth setting, the notions of perfect Morse/Morse-Bott functions are defined after fixing a ring of coefficients for the cohomology group. Now we must consider the mezzo-perversity of the complex used.
\begin{definition}[Perfect stratified Morse/Morse-Bott functions]
\label{definition_perfect_morse_functions}
    For a \textit{fixed mezzo-perversity}, we say that a stratified Morse/Morse-Bott function $h$ is \textbf{perfect at degree $k$} if the cohomology of the corresponding instanton complex at degree $k$ has the same dimension as the cohomology at degree $k$ of the $L^2$ de Rham complex.
    We say \textbf{$h$ is perfect} if it is perfect at each degree $k$, equivalently if the Morse polynomial is equal to the Poincar\'e polynomial.
\end{definition}

\subsection{Overview of sections}

The breakdown of each section is as follows. In the rest of this section we survey and discuss related results and ideas.

In Section \ref{section_background} we review stratified pseudomanifolds and their resolutions and discuss wedge metrics and related structures. We then introduce flat stratified Morse-Bott functions and fundamental neighbourhoods of critical points as well as associated resolved versions. %We also introduce K\"ahler Hamiltonian stratified flat Morse-Bott functions.

In Section \ref{section_Hilbert_complexes_and_cohomology} we begin by discussing abstract Hilbert complexes before discussing twisted de Rham complexes. Here we review the choices of self-adjoint extensions of the corresponding Dirac operators following \cite{Albin_hodge_theory_cheeger_spaces,Albin_hodge_theory_on_stratified_spaces} before discussing the corresponding restricted complexes on fundamental neighbourhoods of critical points of stratified Morse-Bott functions, where we \textit{do not} assume the flatness condition in Definition \ref{Definition_flatness_assumption}.
We also introduce normal cohomology complexes on fundamental neighbourhoods before exploring Poincar\'e dual complexes and self-dual mezzo-perversities. We then discuss some results on polynomial supertraces that are used to prove Morse inequalities once the instanton complexes are constructed.

In Section \ref{section_Witten_deformation} we discuss the Witten deformed versions of the complexes introduced in Section \ref{section_Hilbert_complexes_and_cohomology}. We then develop localization results by studying Bochner type formulas for Witten deformed Laplace-type operators, and study spectral properties for local Laplace-type operators for flat stratified Morse-Bott functions.

In Section \ref{section_technical} we prove the main results that we discussed in the introduction. We give a computational guide in Section \ref{Section_computational_guide} where we study illustrative examples.

\subsection{Background and related results}
\label{subsection_History_and_motivations}

Morse theory is a rich subject with applications in various areas, where the critical points of functions and functionals on various spaces can be used to study the topology of such spaces. Morse studied the critical points of an energy functional on the path space to prove the existence of infinitely many geodesics. This follows from showing that the homology of the path space is infinite dimensional and that each homology class contains a geodesic representative. Bott extended those ideas to compute the homotopy groups of the classical Lie groups, and studied what are now known as Morse-Bott functions where the critical points are not isolated. In \cite{AtiyahBottYangMillsRiemann83} Atiyah and Bott studied the critical points of the Yang-Mills functional on Riemann surfaces and showed that the critical points are non-isolated, and developed equivariant Morse-Bott theory when there are group actions. Kirwan extended this in algebraic and symplectic settings where Hamiltonians are Morse-Bott-Kirwan \cite{Kirwancohomofquot84}, and this is still a topic of great interest (see, e.g., \cite{feehan2022bialynicki,austin1995morse}).

Witten discovered a novel perspective on Morse theory while studying supersymmetry breaking (see \cite{witten1981dynamical,witten1982constraints,witten1982supersymmetry}) which he developed further for twisted Dolbeault complexes in \cite{witten1983fermion,witten1984holomorphic} when there is a K\"ahler isometry. If the dynamical systems approach to Morse theory by Smale is viewed as a classical mechanical system, then Witten proposed quantizing the system and studying the semi-classical limit $\hbar=1/\varepsilon \rightarrow 0$. The classical ground states are the minima of the potential $|dh|^2$, i.e. the critical points of the Morse function $h$, and the quantum ground states are well approximated by Gaussians which converge to delta functions at the classical ones. Doing this procedure on the $L^2$ bounded forms by deforming the de Rham (Dolbeault) differentials in a manner that the supersymmetry is preserved one shows that the direct sum of localizing Gaussians, isomorphic to the null space of the local Hodge Laplacian with suitable boundary conditions, forms a complex that is quasi-isomorphic to the undeformed complex. There is an underlying $\mathcal{N}=2$ supersymmetric quantum mechanical system, where the SUSY operators are $Q_1=d_\varepsilon+d^*_\varepsilon$ and $Q_2=\sqrt{-1}(d_\varepsilon-d^*_\varepsilon)$ (see \cite{witten1982supersymmetry}).
Generalizing this richer description of the Morse inequalities to singular spaces comes with unique difficulties (compared to algebraic and topological approaches) since one has to study not only cohomology groups but also the spectral theory of Schr\"odinger operators and their semi-classical limits. 

The case of Morse-Bott functions on smooth manifolds was already addressed in \cite{witten1982supersymmetry}, then studied more thoroughly in \cite{bismut1986witten} by Bismut. A simpler proof technique was used for Morse-Bott-Kirwan critical point sets arising as critical points of K\"ahler Hamiltonian functions in \cite{wu1998equivariant} by Wu and Zhang for twisted Dolbeault complexes, building on work of Bismut-Lebeau in \cite{bismut1991complex} (c.f. the exposition in \cite{Zhanglectures}).
We mainly follow this approach, generalizing it to our setting.
Kirwan and Penington \cite{Kirwan_Morse_21} generalized the Witten instanton complexes to functions with arbitrary critical point sets (with finitely many connected components) on smooth manifolds. % and it would be interesting to generalize this to singular spaces.

In the singular setting, Morse theory was studied by Goresky and MacPherson \cite{goresky1983intersection} who formulated Morse inequalities. The Morse inequalities in $L^2$ cohomology for stratified pseudomanifolds for a large class of metrics were proven in \cite{Jesus2018Wittens,Jesus2018Wittensgeneral} for the minimal and maximal domains for the de Rham complex, and the Witten instanton complexes were constructed in \cite{ludwig2013analytic,ludwig2017index}. 
Analytic techniques to study Morse theory for Hilbert complexes on stratified pseudomanifolds were developed in \cite{jayasinghe2023l2,jayasinghe2024holomorphic} for twisted de Rham and Dolbeault complexes, constructing instanton complexes for the case of wedge metrics where the complexes satisfy a Witt condition.
The latter work built techniques to study Lefschetz fixed point theorems, equivariant index theorems and Morse inequalities since these are highly intertwined.
our article must be considered in that light as an exploration of techniques that can be used to extend other related formulas used in mathematics and physics to Hilbert spaces corresponding to singular spaces.

We emphasize that in this article we are not simply considering Morse-Bott functions $h$ on a resolution of the space which correspond to lifts of functions which have isolated critical points on the stratified space, such as in \cite{ludwig2024morse}. Rather we are considering functions on stratified spaces $\widehat{X}$ which have non-isolated critical point sets on the stratified space itself, and while such settings have been studied using algebraic and topological techniques as discussed earlier, our article seems to be the first to do so using analytic means. It is also the first to construct Witten's instanton complexes for choices of mezzo-perversities other than those corresponding to the minimal and maximal domains, and for non-isolated critical point sets of any kind on stratified pseudomanifolds. A key innovation here is in defining the local Hilbert complexes and normal cohomology complexes associated to critical point sets, even when the critical point sets are stratified. These generalize local Hilbert complexes for fundamental neighborhoods in the case of isolated critical point sets used in \cite{jayasinghe2023l2,jayasinghe2024holomorphic} and we can compute local Morse polynomials in many examples with these definitions, even without the flatness condition for stratified Morse-Bott functions that we use to prove our main results. In the smooth setting it is known (see, e.g., \cite{Orientation_Morse_Bott_Rot_2016}) that the terms contributing to the Morse polynomial from a Morse-Bott critical point set is given by a polynomial supertrace over cohomology with local coefficients in the expanding (negative) normal bundles. In the singular setting both the attracting and expanding normal bundles give contributions (see Example \ref{example_suspension_suspension_torus_2}).

Witten notes that the analysis of the local eigenstates near non-isolated critical point sets in the smooth setting is similar to Born-Oppenheimer approximation, and this was studied rigorously in \cite{bismut1986witten}. 
The flat condition in Definition \ref{Definition_flatness_assumption} that we assume for the functions that we study allows for simpler techniques for the computation of the spectrum, but is also an assumption that has been used for simpler computational techniques of local cohomology groups using topological tools (see, e.g. \cite{BanaglFlatfiber2013} which includes interesting examples).

The analysis of non-isolated critical point sets in the smooth setting is related to equivariant localization, widely used in various applications of physics, for instance in computations of blackhole entropy in which these critical point sets are called bolts \cite{gibbons1979classification,Equivariant_loc_Sparks_2024}. Related invariants were extended to isolated critical point sets in \cite{jayasinghe2024holomorphic} and our study of local Hilbert complexes sets the stage to extend these results to the singular setting.

We saw in Corollary \ref{corollary_Poincare_dual_inequalities_intro_version} that the refined Morse inequalities can be constructed only when the mezzo-perversity is self-dual. The key to this breakdown is the fact that the local cohomology of attracting vs. expanding critical point sets correspond to those of the local complex and its adjoint complex and if the gradient flow of $h$ has an attracting critical point set, then it is expanding for the gradient flow of $-h$, and vice versa. This phenomenon is also captured in our construction of local Hilbert complexes near critical point sets.
Since the adjoint complex is can be identified with the Poincar\'e dual complex in the case of self-dual mezzo-perversities, we get refined polynomial Morse inequalities. Example \ref{example_breakdown_refined_inequalities} shows that this result does not hold without self-duality. 

The gradient flow for $-h$ corresponds to the time reversed gradient flow for $h$, and this lifts to a time reversal on the quantized system implemented by the Hodge star operator on forms, even before taking a semi-classical limit. In the singular setting, if a quantization of the gradient flow of $h$ corresponds to the Witten deformed complex with mezzo-perversity $W$, then the time reversed system corresponds to the complex with the dual mezzo-perversity $\star W$, and there is time reversal symmetry only if the mezzo-perversity is self-dual.

In the case of holomorphic Morse inequalities formulated by Witten in \cite{witten1984holomorphic}, the Morse polynomials are polynomials in $b$ where the coefficients are infinite series in powers of $e^{i\theta}$ (up to shifts for twisted complexes). However the refined Morse polynomials are polynomials in $b$ where the coefficients are finite series in $e^{i\theta}$. This was extended in \cite{jayasinghe2024holomorphic} to the singular setting with isolated critical points where the complex and the Serre dual complex are related by the Hodge star operator. While the Dolbeault-Dirac operators are not essentially self-adjoint, the Witt condition reduces the complications arising in the dualities. 
Witten deformation for spin-c Dirac operators on symplectic manifolds is quite similar to the Dolbeault case \cite{tian1998holomorphic} and we anticipate that there will be similar phenomena as those studied in this article when studying dualities of spin-$\mathbb{C}$ Dirac complexes on non-Witt spaces with certain choices of domains of operators. 

It is well known that Morse inequalities reduce to Lefschetz fixed point theorems on evaluating the Morse polynomials at $b=-1$. However Lefschetz fixed point theorems hold for more general self maps, not just those that arise from gradient flows of functions. When the critical points are isolated and the space is Witt, this reduces to the de Rham case of the Lefschetz fixed point theorem given in \cite{jayasinghe2023l2}.

Certain other invariants such as the equivariant signature and equivariant de Rham type Poincar\'e Hodge inequalities can be studied when the stratified Morse-Bott function is K\"ahler Hamiltonian extending Theorem 1.10 of \cite{jayasinghe2024holomorphic}, using the instanton complex we construct here, as long as the K\"ahler metric is of product type in a neighbourhood of a critical point set. This can be done without constructing a more general Dolbeault instanton complex, since the Witten deformation will then respect the $(p,q)$ decomposition. We will discuss these in a future article on holomorphic Morse inequalities in the presence of non-isolated critical points.

\textbf{Acknowledgements:} 
The authors would like to thank Pierre Albin for many helpful conversations, and organizing a reading group on topics related to this article.
GJ thanks Jes\'us \'Alvarez L\'opez for useful comments on related work, and also thanks the University of Peradeniya for its hospitality while completing this work.

\section{Stratified spaces and Morse-Bott functions}
\label{section_background}

In this section, we review stratified spaces and their resolutions, and study the structure of critical point sets of \textit{flat stratified Morse-Bott functions}.

\subsection{Stratified pseudomanifolds}

\subsubsection{Stratified pseudomanifolds}

All topological spaces we consider will be Hausdorff, locally compact topological spaces with a countable basis for their topology. Recall that a subset $W$ of a topological space $V$ is locally closed if every point $a \in W$ has a neighborhood $\mathcal{U}$ in $V$ such that $W \cap \mathcal{U}$ is closed in $\mathcal{U}$. A collection ${S}$ of subsets of $V$ is locally finite if every point $v \in V$ has a neighborhood that intersects only finitely many sets in ${S}$.

\begin{definition}
    \label{Thom-Mather stratified space}    
A Thom-Mather stratified space $\widehat{X}$ is a metrizable, locally compact, second countable space which admits a locally finite decomposition into a union of locally closed strata $\mathcal{S}(\widehat{X})=\left\{Y_{\alpha}\right\}$, where each $Y_{\alpha}$ is a smooth manifold, with dimension depending on the index $\alpha$. We require the following:
\begin{enumerate}
    \item If $Y_{\alpha}, Y_{\beta} \in \mathcal{S}(X)$ and $Y_{\alpha} \cap \overline{Y_{\beta}} \neq \emptyset$, then $Y_{\alpha} \subset \overline{Y_{\beta}}$.

    \item Each stratum $Y$ is endowed with a set of `control data' $\mathcal{T}_Y, \pi_{Y}$ and $\rho_{Y}$; here $\mathcal{T}_Y$ is a neighbourhood of $Y$ in $X$ which retracts onto $Y, \pi_{Y}: \mathcal{T}_Y \longrightarrow Y$ is a fixed continuous retraction and $\rho_{Y}: \mathcal{T}_Y \rightarrow[0,2)$ is a `radial function' in this tubular neighbourhood such that $\rho_{Y}^{-1}(0)=Y$. Furthermore, we require that if $\widetilde{Y} \in \mathcal{S}(X)$ and $\widetilde{Y} \cap \mathcal{T}_Y \neq \emptyset$, then
\begin{equation}    
    \left(\pi_{Y}, \rho_{Y}\right): \mathcal{T}_Y \cap \widetilde{Y}  \setminus Y \longrightarrow Y \times (0,2)
\end{equation}
is a differentiable submersion.

\item If $W, Y, \widetilde{Y} \in \mathcal{S}(X)$, and if $p \in \mathcal{T}_Y \cap \mathcal{T}_{\widetilde{Y}} \cap W$ and $\pi_{\widetilde{Y}}(p) \in \mathcal{T}_Y \cap \widetilde{Y}$, then $\pi_{Y}\left(\pi_{\widetilde{Y}}(p)\right)=\pi_{Y}(p)$ and $\rho_{Y}\left(\pi_{\widetilde{Y}}(p)\right)=\rho_{Y}(p)$.
\end{enumerate}
\end{definition}

From this definition, and due to Thom's first isotopy lemma, we have structure of neighborhoods as locally trivial fibrations over strata $Y$, and the following are some direct consequences.

\begin{enumerate}
\item If $Y, \widetilde{Y} \in \mathcal{S}(X)$, then
$$
\begin{aligned}
Y \cap \overline{\widetilde{Y}} \neq \emptyset & \Leftrightarrow \mathcal{T}_Y \cap \widetilde{Y} \neq \emptyset, \\
\mathcal{T}_Y \cap \mathcal{T}_{\widetilde{Y}} \neq \emptyset & \Leftrightarrow Y \subset \overline{\widetilde{Y}}, Y=\widetilde{Y} \quad \text {or } \widetilde{Y} \subset \overline{Y} .
\end{aligned}
$$

\item For each $Y \in \mathcal{S}(X)$, the restriction $\pi_{Y}: \mathcal{T}_Y \rightarrow Y$ is a locally trivial fibration with fibre the cone $C\left(Z_{Y}\right)$ over some other stratified space $Z_{Y}$ (called the \textbf{\textit{link}} over $Y)$, with atlas $\mathcal{U}_{Y}=\{(\phi, \mathcal{U})\}$ where each $\phi$ is a trivialization 
\begin{equation}
\label{equation_chartlike_map}
\pi_{Y}^{-1}(\mathcal{U}) \rightarrow \mathcal{U} \times C\left(Z_{Y}\right),    
\end{equation}
and the transition functions are stratified isomorphisms of $C\left(Z_{Y}\right)$ which preserve the rays of each conic fibre as well as the radial variable $\rho_{Y}$ itself, hence are suspensions of isomorphisms of each link $Z_{Y}$ which vary smoothly with the variable $y \in \mathcal{U}$.
\end{enumerate}

This structure shows that there is a filtration of $\widehat{X}$ of the form
\begin{equation}
    \widehat{X}_n \supseteq  \widehat{X}_{n-1} \supseteq  \widehat{X}_{n-2}... \supseteq \widehat{X}_{0}
\end{equation}
where $\widehat{X}_{j}$ is the union of strata $Y_{\alpha} \subset S(\widehat{X})$ of dimension less than or equal to $j$. It is clear that $\widehat{X}_j \backslash \widehat{X}_{j-1}$ is a smooth manifold of dimension $j$. We define the \textit{\textbf{regular part}} of $\widehat{X}$ to be $X^{reg}:=\widehat{X}_n \setminus \widehat{X}_{n-1}$. 

If in addition if $\widehat{X}_{n-1}=\widehat{X}_{n-2}$ and $\widehat{X} \backslash \widehat{X}_{n-2}$ is dense in $\widehat{X}$, 
then we say that $\widehat{X}$ is a \textbf{stratified pseudomanifold}.

There is a functorial equivalence between Thom-Mather stratified spaces and manifolds with corners and iterated fibration structures (see Proposition 2.5 of \cite{Albin_signature}, Theorem 6.3 of \cite{Albin_hodge_theory_cheeger_spaces}).

\subsubsection{Manifolds with corners with iterated fibration structures}

In section 1 of \cite{Albin_2017_index}, there is a detailed description of iterated fibration structures on manifolds with corners. We explain some of these structures from said article which we will use and refer the reader to the source for more details. In \cite{kottke2022products}, these are referred to as manifolds with fibered corners that are interior maximal.

An $n$-dimensional manifold with corners $X$ is an $n$-dimensional topological manifold with boundary, with a smooth atlas modeled on $(\mathbb{R}^+)^n$ whose boundary hypersurfaces are embedded. We denote the set of boundary hypersurfaces of $X$ by $\mathcal{M}_1(X)$. A collective boundary hypersurface refers to a finite union of non-intersecting boundary hypersurfaces.

%++++++++++++++++++++++++++++++++++++++++++++++++++++++++++++++++++++++
% Iterated fibration structures
\begin{definition}
\label{iterated_fibration_structure}
An \textit{\textbf{iterated fibration structure}} on a manifold with corners $X$ consists of a collection of fiber bundles
\begin{center}
    $Z_Y -\mathfrak{B}_Y \xrightarrow{\phi_Y} Y$
\end{center}
where $\mathfrak{B}_Y$ is a collective boundary hypersurface of $X$ with base and fiber manifolds with corners such that:

\begin{enumerate}
    \item Each boundary hypersurface of $X$ occurs in exactly one collective boundary hypersurface $\mathfrak{B}_Y$.
    \item If $\mathfrak{B}_Y$ and $\mathfrak{B}_{\widetilde{Y}}$ intersect, then dim $Y \neq$ dim 
    $\widetilde{Y}$, and we write $Y < \widetilde{Y}$ if dim $Y < $ dim $ \widetilde{Y}$.
    \item If $Y < \widetilde{Y}$, then $\widetilde{Y}$ has a collective boundary hypersurface $\mathfrak{B}_{Y\widetilde{Y}}$ participating in a fiber bundle $\phi_{Y\widetilde{Y}} : \mathfrak{B}_{Y\widetilde{Y}} \rightarrow Y$ such that the diagram 
    
\[\begin{tikzcd}
	{\mathfrak{B}_Y \cap \mathfrak{B}_{\widetilde{Y}}} && {\mathfrak{B}_{Y\widetilde{Y}} \subseteq \widetilde{Y}} \\
	& Y
	\arrow["{\phi_{\widetilde{Y}}}", from=1-1, to=1-3]
	\arrow["{\phi_Y}"', from=1-1, to=2-2]
	\arrow["{\phi_{Y\widetilde{Y}}}", from=1-3, to=2-2]
\end{tikzcd}\]
commutes.
\end{enumerate}
\end{definition}

%+++++++++++++++++++++++++++++++++++++++++++++++++++++++++++++++++++++++++
The base can be assumed to be connected but the fibers are in general disconnected. As we mentioned above, there is an equivalence between Thom-Mather stratified spaces and manifolds with corners with iterated fibration structures. 

If we view a cone over a link $Z$ as the quotient space of $[0,1]_x \times Z$ where the points of the link at $\{x=0\} \times Z$ are identified, then the quotient map is a \textit{blow-down map}.
More generally, there is a desingularization procedure which replaces a Thom-Mather stratified space $\widehat{X}$ with a manifold with corners with an iterated fibration structure, $X$.
There is a corresponding {blow-down} map 
\[ \beta : X \rightarrow \widehat{X}, \] 
which satisfies properties given in Proposition 2.5 of \cite{Albin_signature}.
See Remark 3.3 of \cite{Albin_2017_index} for an instructive toy example.

Under this equivalence, the bases of the boundary fibrations correspond to the different strata, which we shall denote by
\begin{equation}
    \mathcal{S}(X) = \{Y: Y \textit{ is the base of a boundary fibration of X} \}.
\end{equation}
The bases and fibers of the boundary fiber bundles are manifolds with corners with iterated fibration structures (see for instance Lemma 3.4 of \cite{albin2010resolution}). The condition dim $Z_Y > 0$ for all $Y$ corresponds to the category of pseudomanifolds within the larger category of stratified spaces. The partial order on $\mathcal{S}(X)$ gives us a notion of depth
\begin{center}
$ \text{depth}_X(Y) = \max \{ n \in \mathbb{N}_0 : \exists Y_i \in \mathcal{S}(X)$ s.t. $Y=Y_0 <Y_1 < ... <Y_n \}$.
\end{center}
The depth of $X$ is then the maximum of the integers $\text{depth}_X(Y)$ over $Y \in \mathcal{S}(X)$. 

We now introduce some auxiliary structures associated to manifolds with corners with iterated fibration structures.
If $H$ is a boundary hypersurface of $X$, then because it is assumed to be embedded, there is a smooth non-negative function $\rho_H$ such that 
$\rho_{H}^{-1}(0)=H$ and $d\rho_H$  does not vanish at any point on $H$. We call any such function a \textbf{\textit{boundary defining function for $H$}}.
For each $Y \in \mathcal{S}(X)$, we denote a \textbf{\textit{collective boundary defining function}} by
\begin{center}
    $\rho_Y = \prod_{H \in \mathfrak{B}_Y} \rho_H$,
    and by
    $\rho_X = \prod_{H \in \mathcal{M}_1(X)} \rho_H$
\end{center}
 a \textbf{\textit{total boundary defining function}}, where $\mathcal{M}_1(X)$ denotes the set of boundary hypersurfaces of $X$.
%++++++++++++++++++++++++++++++++++++++++++++++++++++++++++++++++++++++
%collared iterated fibration structure.

%++++++++++++++++++++++++++++++++++++++++++++++++++++++++++++++++++++++
When describing the natural analogues of objects in differential geometry on singular spaces, the iterated fibration structure comes into play. For example,
\begin{equation}
\label{equation_smooth_functions_on_stratified_spaces}
\mathcal{C}_{\Phi}^{\infty}(X)= \{ f \in \mathcal{C}^{\infty}(X) : f \big|_{\mathfrak{B}_Y} \in 
\phi_{Y}^*\mathcal{C}^{\infty}(Y) \text{ for all } Y \in \mathcal{S}(X) \}
\end{equation}
corresponds to the smooth functions on $X$ that are continuous on the underlying
stratified space $\widehat{X}$, compatible with the boundary fibration structure on $X$, and restricts to a smooth function on each $Y\in \mathcal{S}(X)$. 

\subsubsection{Wedge metrics and related structures}
\label{subsubsection_wedge_metrics_related_structures}

On Thom-Mather stratified pseudomanifolds we can define metrics which are \textit{locally conic}. For instance, consider the model space $\widehat{X}=\mathbb{R}^k \times C({Z})$, and the resolved manifold with corners that corresponds to the blowup of this space is $X=\mathbb{R}^k \times [0,1)_x \times Z$. By a model wedge metric we mean a metric of the form
\begin{equation}
\label{equation_definition_homogeneous_metric_cone}
    g_{w} = g_{\mathbb{R}^k} + dx^2 + x^2 g_Z,
\end{equation}
where $g_Z$ is a wedge metric on $Z$. This is a product metric on the product space $X^{reg}$ and which we call a  \textbf{\textit{rigid/ product type}} wedge metric, degenerating as it approaches the stratum at $x=0$. These metrics are degenerate as bundle metrics on the tangent bundle but we can introduce a rescaled bundle on which they are non-degenerate. 

Formally we proceed as follows. Let $X$ be a manifold with corners and iterated fibration structure. Consider the ‘wedge one-forms’
\begin{center}
    $\mathcal{V}^{*}_{w}= \{ \omega \in \mathcal{C}^{\infty}(X; T^*X) : \text{ for each } Y \in \mathcal{S}(X), \text{  }i^*_{\mathfrak{B}_Y} \omega (V) =0 \text{ for all } V \in \text{ker } D\phi_Y \}$.
\end{center}
Using the Serre-Swan theorem or proceeding as in \cite[\S 8.1]{melroseAPS}
we can identify $\mathcal{V}^{*}_{w}$ with the space of sections of a vector bundle $\prescript{{w}}{}{T}^*X$ which we call the wedge cotangent bundle of X, together with a map
\begin{equation}
    i_{w} : \prescript{{w}}{}{T}^*X \rightarrow T^*X
\end{equation}
that is an isomorphism over $X^{reg}$ such that,
\begin{center}
    $(i_{w})_*\mathcal{C}^{\infty}(X;\prescript{{w}}{}{T}^*X) =  \mathcal{V}^{*}_{w} \subseteq \mathcal{C}^{\infty}(X; T^*X)$.
\end{center}
In local coordinates near a collective boundary hypersurface, the wedge cotangent bundle is spanned by
\begin{center}
    $ dx$, $xdz$, $dy$
\end{center}
where $x$ is a boundary defining function for $\mathfrak{B}_Y$, $dz$ represents covectors along the fibers and $dy$ covectors along the base.
The dual bundle to the wedge cotangent bundle is known as the wedge tangent bundle, $\prescript{{w}}{}{T}X$. It is locally spanned by
\begin{center}
    $\partial_x$, $\frac{1}{x}\partial_z$, $\partial_y$
\end{center}
A \textit{\textbf{wedge metric}} is simply a bundle metric on the wedge tangent bundle.
Totally geodesic wedge metrics and exact wedge metrics are defined inductively. On depth zero spaces, which are just smooth manifolds, a wedge metric is a Riemannian metric. Assuming we have defined totally geodesic wedge metrics at spaces of depth less than $k$, let us assume $X$ has depth $k$.
A wedge metric $g_w$ on $X$ is a \textit{\textbf{totally geodesic wedge metric}} if, for every $Y \in \mathcal{S}(X)$ of depth $k$ there is a collar neighbourhood $\mathscr{C}(\mathfrak{B}_Y) \cong [0,1)_x \times \mathfrak{B}_Y$ of $\mathfrak{B}_Y$ in $X$, a metric $g_{w, pt}$ of the form
\begin{equation}
    g_{w, pt} = dx^2 +x^2 g_{\mathfrak{B}_Y/Y} +\phi^*_Yg_Y
\end{equation}
where $g_Y$ is a totally geodesic wedge metric on Y, $g_{\mathfrak{B}_Y/Y}+\phi^*_Yg_Y$ is a submersion metric for $\mathfrak{B}_Y \xrightarrow{\phi_Y} Y$ and $g_{\mathfrak{B}_Y/Y}$ restricts to each fiber of $\phi_Y$ to be a totally geodesic wedge metric on $Z_Y$ and
\begin{equation}
\label{equation_structure of the metric}
    g_{w} - g_{w, pt} \in x^2 \mathcal{C}^{\infty} \big(\mathscr{C}(\mathfrak{B}_Y); S^2(^{w}T^*X))\big).
\end{equation}
If at each step  $g_{w} = g_{w, pt}$, we say $g_w$ is a \textit{\textbf{rigid}} or \textit{\textbf{product-type wedge metric}}. If at every step, $g_{w} - g_{w, pt} = \mathcal{O}(x)$ as a symmetric two-tensor on the wedge tangent bundle, we say $g_w$ is an \textit{\textbf{exact wedge metric}}. We work with exact wedge metrics in this article.

The notion of a \textit{\textbf{wedge differential operator}} 
$P$ of order $k$ acting on sections of a vector bundle $E$, taking them to sections of a vector bundle $F$ is described on page 11 of \cite{Albin_2017_index}. We first define the \textit{\textbf{edge vector fields on X}} by
\begin{center}
    $\mathcal{V}_e = \{ V \in  \mathcal{C}^{\infty}(X;TX): V \big|_{\mathfrak{B}_Y}$ is tangent to the fibers of $\phi_Y$ for all $Y \in \mathcal{S}(X) \}$.
\end{center}
There is a rescaled vector bundle that is called the \textit{\textbf{edge tangent bundle}} $\prescript{e}{}TX$ together with a natural vector bundle map $i_e : \prescript{e}{}TX \rightarrow TX$ that is an isomorphism over the interior and satisfies
\begin{center}
    $(i_e)_* \mathcal{C}^{\infty}(X;\prescript{e}{}TX)=\mathcal{V}_e$.
\end{center}
In local coordinates near a point in $\mathfrak{B}_Y, (x, y, z)$,
a local frame for $\prescript{e}{}TX$ is given by 
\begin{center}
    $ x\partial_x, x\partial_y, \partial_z$
\end{center}
Note that the vector fields $x\partial_x$ and $x\partial_y$ are degenerate as sections of $TX$, but not as sections of $\prescript{e}{}TX$. 

The universal enveloping algebra of $\mathcal{V}_e$ is the ring $\text{Diff}^*_e(X)$ of edge differential operators. That is, these are the differential operators on $X$ that can be expressed locally as finite sums of products of elements of $\mathcal{V}_e$. They have the usual notion of degree and extension to sections of vector bundles, as well as an edge symbol map defined on the edge cotangent bundle (see \cite{Mazzeo_Edge_Elliptic_1,Albin_signature,Albin_hodge_theory_cheeger_spaces}).
Similarly, $\text{Diff}^*_e(X; E, F)$ denotes the edge differential operators acting on sections of a vector bundle $E$ and taking them to a sections on a vector bundle $F$.

We define \textit{\textbf{wedge differential operators}} by 
\begin{equation*}
    \text{Diff}^k_w(X;E,F) = \rho^{-k}_X \text{Diff}^k_e(X;E,F)
\end{equation*}
following, e.g., \cite{Albin_2017_index}, where $\rho_X$ is a total boundary defining function for $X$.

\subsubsection{Hodge-de Rham Dirac operator.}
\label{subsection_de_Rham}

Given a wedge metric, we can construct the corresponding Hodge de Rham Dirac operator similar to the smooth setting.
We follow \cite[\S 4.3]{Zhanglectures} and introduce the Clifford operators 
\begin{equation}
    cl(u)=u \wedge - \iota_{u^\#}, \quad \widehat{cl}(u)=u \wedge + \iota_{u^\#}
\end{equation}
where $u$ is a wedge one form, and $cl(u)$ is the Clifford multiplication for the de Rham complex. Zhang's notation is for the Clifford algebra on the tangent space, while we use that on the wedge cotangent bundle by duality, as in \cite{jayasinghe2023l2}. This action extends to forms with coefficients in a flat bundle over $X$ similar to the smooth setting (see \cite[\S 2]{albin2013novikov}).

Given wedge co-vector fields $u, v \in \Gamma(^{w} T^*X)$, we have the relation 
\begin{equation}
    \{cl(u), cl(v)\}=cl(u)cl(v)+cl(v)cl(u)=-2 g(u, v).
\end{equation}
Given a flat connection on a bundle $E$, there is an induced connection on forms valued in $E$, which we denote by $\nabla^E$. Taking the anti-symmetrization after acting on forms by $\nabla^E$ defines an operator $d_E$ and this satisfies $d_E^2=0$ (due to the flatness of the connection) similar to the smooth setting. 

Composing $\nabla^E$ with the Clifford action we obtain the Hodge de Rham Dirac operator $D$ acting on sections supported on $\mathring{X}$. Given a (local) orthonormal frame of the wedge cotangent bundle $\{e^1, e^2, \ldots, e^{n} \}$, we can write
\begin{equation}
    D=\sum_{i=1}^{n} cl(e^{i}) \nabla^E_{{e^{i}}^\#}.
\end{equation}
If for some $Y\in \mathcal{S}(X)$, we restrict to a collar neighborhood of $\mathfrak{B}_Y$ where we have an exact wedge metric, the Dirac operator can be written as
\begin{equation}
\label{temp_lah_di_dah}
cl(\partial_x)\nabla^E_{\partial_x} + {cl}\left(\frac{1}{x}\partial_{z_i}\right)\nabla^E_{\frac{1}{x}\partial_{z_i}} +{cl}(\partial_{y_j})\nabla^E_{\partial_{y_j}}
\end{equation}
up to a differential operator in $x\text{Diff}^1_w(X;F)$.   
Here $x$ is a boundary defining function for $\mathfrak{B}_Y$, and we recognize \eqref{temp_lah_di_dah} as a wedge differential operator of order one. It is well known that restricted to $X^{reg}$ we can write the operator as $D_E=d_E+d^*_E$ where
$d_E$ is the twisted de Rham operator and $d_E^*$ its formal adjoint.

\subsection{Stratified Morse-Bott functions}
\label{subsection_stratified_Morse_Bott_functions}

A Morse-Bott function on a smooth manifold is a function whose critical points consist of non-degenerate critical submanifolds, and we refer to \cite{AtiyahBottYangMillsRiemann83} for an exposition.
As opposed to the simpler case of isolated critical points, one motivation to study such critical point \emph{sets} is because they occur when the Morse-Bott function arises from a Hamiltonian group actions; in general such critical point sets can be different than those studied by Bott, and in \cite{Kirwancohomofquot84} what is now known as the \textit{Morse-Bott-Kirwan} condition was introduced. 

The study of Morse-Bott-Kirwan critical point sets on smooth spaces has been generalized in various ways on certain singular spaces such as singular projective varieties equipped with algebraic torus actions; the work of (\cite{kirwan1988intersection,feehan2022bialynicki}) study Morse-Bott functions and corresponding inequalities on singular spaces using different methods, including using Bialynicki-Birula decompositions of algebraic varieties corresponding to group actions.

A Morse-Bott function on a smooth manifold $M$ is a function $h$ such that the set where $dh=0$ is non-degenerate in the following manner, where we follow \cite[\S 1]{AtiyahBottYangMillsRiemann83}. If $N \subset M$ is a connected submanifold, it will be a \textit{non-degenerate critical point set of $h$} if and only if 
\begin{enumerate}
    \item $dh=0$ along $N$, 
    \item the Hessian of $h$ is non-degenerate on the normal bundle to $N$ in $M$.
\end{enumerate}
This is equivalent to the statement that there is a tubular neighbourhood of $N$ in $M$ which is a fibration over $N$, where the fibers are diffeomorphic to a product of discs $\mathbb{D}^{k_1}_x \times \mathbb{D}^{k_2}_r$ where $x, r$ are the radial distance functions on each disc factor, such that the Morse-Bott function can be written as $x^2-r^2$ with respect to some choice of smooth metric on the neighbourhood. It is the latter notion which we generalize to the stratified setting since the standard non-degeneracy condition on the Hessian is more opaque in this singular setting. We begin by first defining the notion of two types of \emph{stratified} non-degenerate critical point set, with respect to some wedge structure.

\begin{remark}
    \textbf{In the following three definitions}, we shall assume $\widehat{X}$ to be a stratified pseudomanifold with a wedge metric $g_w$ and a continuous function $h' \in C^0(\widehat{X})$ which lifts to a map $h \in C_{\Phi}^{\infty}(X)$ (see \eqref{equation_smooth_functions_on_stratified_spaces}) on the resolved manifold with corners, i.e., $h=h' \circ \beta$. We write $\text{crit}(h):=\beta(|dh|_{g_w}^{-1}\{0\})$.

    We note that despite the appearance of the metric $g_w$ in the following definitions, the condition of being critical, i.e. $\widehat{F_a}\subset \beta(|dh|_{g_w}^{-1}(0))$, is independent of the choice of wedge metric $g_w$. In fact we only need that the differential of $h$ vanishes along $\overline{\beta^{-1}\left(\widehat{F_a}\right)}$.
\end{remark}
    
\begin{definition}[\textbf{Smooth non-degenerate critical point sets}]
    If $F_a\subset \widehat{X}$ is a connected component of $\text{crit}(h)$ which is wholly a subset of $X^{reg}$, then we say it is a smooth non-degenerate critical point set if $F_a$ is a smooth submanifold of $X^{reg}$ and the Hessian of $h$ is non-degenerate on the normal bundle of $F_a$ in $X^{reg}$.
\end{definition}
This is the usual definition of a non-degenerate critical point set for a Morse-Bott function.

\begin{definition}[\textbf{Type I stratified critical point sets}]
    If $F_a\subset \widehat{X}$ is a connected component of $\text{crit}(h)$ which is disjoint from $\widehat{X}^{reg}$, then we say that it is a Type I stratified critical point set if $F_a$ is a smooth submanifold of $Y_\alpha\in \mathcal{S}(X)$ where $F_a \hookrightarrow Y_\alpha$ is a smooth embedding, and where the restriction of the function $h$ to $F_a$ is a smooth Morse-Bott function
    
    Since $\pi_\alpha: \mathcal{T}_{Y_\alpha}\rightarrow Y_\alpha$ is a locally trivial fibration with fiber $C(\widehat{Z_a})$, (the cone over a Thom-Mather stratified space $\widehat{Z_a}$), we conclude that $F_a$ has an open neighborhood $V({F_a})$ in $\widehat{X}$ which arises as a fiber bundle 
   \begin{equation}
       \pi_{F_a}: V({F_a}) \to F_a,
   \end{equation}
    with fibers $C(\mathbb{S}^{\ell})\times C(\widehat{Z_a})$, where $\ell+1>0$ is the codimension of the smooth embedding $F_a \hookrightarrow Y_\alpha$. On each fiber we define $\rho_N$ to be the distance from the set $\{0\} \times C(\widehat{Z_a})$, where ($\{0\}$ is the origin of the smooth disc $C(\mathbb{S}^{\ell})$) to a point on the fiber.
    We equip this critical point set with the \textbf{critical control data} $(\mathcal{T}_{F_a}, \pi_{F_a}, \rho_{F_a})$
    where $(\mathcal{T}_{F_a}, \pi_{F_a}):=(V(F_a), \pi_{F_a})$ and $\rho_{F_a}:=\rho_Y^2+\rho_N^2$ where $\rho_Y$ is the boundary defining function of $Y$ corresponding to the radial function given by the control data.
\end{definition}

Since $F_a$ is embedded in $Y_{\alpha}$, the normal bundle of the submanifold within $Y_{\alpha}$ has $\mathbb{R}^l$ fibers, of which we take a ball neighbourhood $C(\mathbb{S}^{\ell})$ and take the product of these fibers at each point with the fibers $C(\widehat{Z_a})$ of the fibration $\pi_\alpha$ to form the fibers of the new fibration $\pi_{F_a}$.

\begin{remark}
\label{Remark_clarify_Type_I}
We observe that by construction, the \textit{critical control data} admits a splitting corresponding to the product of fibers $C(\mathbb{S}^{\ell})\times C(\widehat{Z_a})$. Since the function restricted to $F_a$ is smooth Morse-Bott, the Morse-Bott lemma can be used to factor the disc factor into attracting and expanding disc factors. If in addition the factor $C(\widehat{Z_a})$ admits a similar splitting, overall the fiber can split into 4 factors.
For the purposes of analyzing the local Morse cohomology, it is easier to take products of the attracting and expanding factors (of the disc and cone factors) to form a two factor decomposition of each fiber, which can then be expressed as a whitney sum of fiber bundles. We will do this in Definition \ref{definition_non_degeneracy_critical_point_sets}.

We also note that the critical control data is not equal to the control data of the stratum $Y_{\alpha}$ appearing in the above definition.
\end{remark}

In the following we will assume that the components $Y_{\alpha}$ of the strata $\mathcal{S}(\widehat{X})$ are connected, as can be arranged by refining the stratification by simply separating the connected components.

\begin{definition}[\textbf{Type II stratified critical point sets}]
Now let $\widehat{F_a}\subset \widehat{X}$ be a connected component of $\text{crit}(h)$ which is disjoint from $\widehat{X}^{reg}$.
We say $\widehat{F_a}$ is a Type II stratified critical point set if $\widehat{F_a}$ is a Thom-Mather stratified space which is equal to some $\widehat{Y_{\alpha}}$, by which we denote the metric closure (with the wedge metric obtained by restriction) of a connected stratum $Y_{\alpha}$ of $\widehat{X}$.
 
We denote by $F_a^{reg}=\widehat{F_a}^{reg}:= \widehat{(F_a)}_m \setminus \widehat{(F_a)}_{m-1}$ the regular part of $\widehat{F_a}$.
In this case we define the \textbf{critical control data} $(\mathcal{T}_{F_a^{reg}}, \pi_{F_a}, \rho_{F_a})$ for $F_a^{reg}$ to be the restriction of the control data $(\mathcal{T}_Y,\pi_Y, \rho_Y)$ of $Y$. 
\end{definition}

In what follows, we will always define and study objects related to $F_a^{reg}$, which in the case of Type I stratified critical points corresponds to objects on $F_a$ since in that case $F_a$ is a smooth manifold (a pseudomanifold of depth $0$).

\begin{definition}[\textbf{Non-degeneracy of critical point sets}]
\label{definition_non_degeneracy_critical_point_sets}
If $F_a\subset \text{crit}(h)$ is a critical point set which is of Type I or Type II, we say it is non-degenerate if there is a certain compatibility between the function $h'$ and the critical control data that we defined in each case.

Namely, given the critical control data $(\mathcal{T}_{F_a^{reg}}, \pi_{F_a}, \rho_{F_a})$, we \textit{further} require that the locally trivial fibration 
\begin{equation}
\label{equation_fibration_critical_components}
    \pi_{F_a}:\mathcal{T}_{F_a^{reg}}\to F_a^{reg}
\end{equation}
decomposes as a Whitney sum of fiber bundles
\begin{equation*}
    \mathcal{T}_{F_a^{reg}} = E_u\oplus E_s,
\end{equation*}
where $E_{u/s}:=C(\widehat{Z_{a,u/s}})$, and with respect to this splitting each fiber
\begin{equation*}
\pi_{F_a}^{-1}(q) := C(\widehat{Z_{a,u}})\times C(\widehat{Z_{a,s}}) = C(\widehat{Z_{a,u}\star Z_{a,s}}),
\end{equation*}
is a product of cones 
\begin{equation*}
C(\widehat{Z_{\bullet}}) =  \widehat{Z_{\bullet}} \times [0,2) \Big/ \widehat{Z_{\bullet}}\times\{0\}
\end{equation*}
over other Thom-Mather stratified spaces $\widehat{Z_{a,u}}, \widehat{Z_{a,s}}$. Denoting the restriction of $\rho_{F_a}$ to each subbundle by 
\begin{equation*}
\rho_{F_a}\big|_{E_u} = \rho_{F_a,u}, \quad \rho_{F_a}\big|_{E_s}=\rho_{F_a,s},
\end{equation*}
we require that they satisfy
\begin{equation*}
\rho_{F_a}\big|_{\mathcal{T}_{F_a^{reg}}} = \sqrt{\rho_{F_a,u}^2 + \rho_{F_a,s}^2} ,
\end{equation*}
and with respect to these `radial functions' we demand that $h'$ satisfies

\begin{equation*}
\label{Morse_Bott_Kirwan_modify}
h'\big|_{\mathcal{T}_{F_a^{reg}}} =  f(F_a^{reg})+\rho_{F_a,s}^2 - \rho_{F_a,u}^2. 
\end{equation*}
\end{definition}

As discussed in Remark \ref{Remark_clarify_Type_I}, and following the definition given above, we observe that the factors $E_{u/s}$ of the fibers given above are products of attracting and expanding factors.

\begin{definition}[Stratified Morse-Bott functions]
    \label{Definition_stratified_Morse_Bott_function}
Let $\widehat{X}$ be a stratified pseudomanifold with a wedge metric $g_w$ and a continuous function $h' \in C^0(\widehat{X})$ which lifts to a map $h \in C_{\Phi}^{\infty}(X)$ (see \eqref{equation_smooth_functions_on_stratified_spaces}) on the resolved manifold with corners, i.e., $h=h' \circ \beta$ and which is a Morse-Bott function when restricted to $\widehat{X}^{reg}$. Finally, away from the regular part $\widehat{X}^{reg}$ we demand that the image crit$(h)$ of the set $|dh|_{g_w}^{-1}(0)$ under the blow-down map $\beta$ is a union of non-degenerate critical point sets which are all smooth, or Type I, or Type II stratified critical point sets. Then we say that the function $h'$ is a \textbf{stratified Morse-Bott function} (we use the same terminology for $h$ by abuse of notation, distinguishing them only where it is technically important).
\end{definition}

\begin{definition}[singular Fundamental neighborhood]
Let $\widehat{F_a}$ be a stratified non-degenerate critical point set of a stratified Morse-Bott function $h$ on a stratified pseudomanifold $\widehat{X}$ as above. On the tubular neighborhood $\mathcal{T}_{F_a^{reg}} = \{ \zeta\in \widehat{X} : \rho_{F_a}(\zeta)< 4 \}$ 
we can define the closed subset 
\[ \left\{ \zeta\in \mathcal{T}_{F_a^{reg}}: \max\{\rho_{F_a,u}(\zeta),\rho_{F_a,s}(\zeta)\} \leq 1 \right\} , \]
which is a fiber bundle whose fibers $\widehat{U_a}$ are a product of truncated cones 
\[ \widehat{U_{a}} :=  \widehat{U_{a,s}}\times \widehat{U_{a,u}}  \]
where each factor $\widehat{U_{a,s/u}}$ is homeomorphic to the one point compactification at $\rho_{a,s/u}=0$ of  $\widehat{Z_{a,s/u}}\times [0,1]_{\rho_{a,s/u}}$, where $\rho_{a,s/u}$ is the restriction of 
$\rho_{{F_a},s/u}$ to each fiber.
Taking the metric completion in $\widehat{X}$ of this fiber bundle over $F_a^{reg}$, we denote this completion by $U(\widehat{F_a})$ and refer to it as the \textbf{\textit{singular fundamental neighbourhood of }$\widehat{F_a}$}.
\end{definition}

\begin{remark}
\label{Remark_not_a_neighbourhood}
We emphasize that we do not demand any fiber bundle structure on $U(\widehat{F_a})$, since any such notion would carry with it additional complications since the candidate base is stratified. We follow the philosophy that for the purposes of studying $L^p$ cohomology on spaces, one only has to understand the space of sections on a dense set, and find domains for the operators by taking appropriate (ideal) boundary data. In studying local invariants corresponding to global domains, we will consider restrictions of sections in the global domains to the dense sets and define local domains using appropriate graph closures. This is done on a set which corresponds to a resolution of the above singular fundamental neighbourhoods that we define next.

We emphasize here that the nomenclature of (resolved) fundamental neighbourhood is an abuse of language since the set we define is not strictly a neighbourhood of the fixed point set $F_a$ in the case of stratified critical point sets of Type II.
\end{remark}

\begin{definition}[resolved fundamental neighbourhood]
\label{definition_resolved_fundamental_neighbourhood}
    Given a critical point set $\widehat{F_a}$ with a singular fundamental neighbourhood $U(\widehat{F_a})$, we define the \textbf{(resolved) fundamental neighbourhood} $U(F_a)$ as the fiber bundle 
\begin{equation}
\label{equation_fibration_critical_components_resolved}
    {U_a} - {U(F_a)} \xrightarrow{{\phi_{F_a}}} F_a^{reg}
\end{equation}    
    where the fibers $U_a$ are the products of resolutions  of the fibers $C(\widehat{Z_{a,u}}), C(\widehat{Z_{a,s}})$ i.e. 
\begin{equation}
    U_a =U_{a,s}\times U_{a,u} := \left( [0,1]_x\times Z_{a,s} \right) \times  \left([0,1]_r\times Z_{a,u}\right),
\end{equation}   
    where $r_a=\rho_{a,u}$ and $x_a=\rho_{a,s}$ are new variables on the resolution. We equip $U(F_a)$ with the restriction of the wedge metric $g_w$.
    We will refer to these resolved fundamental neighbourhoods as \textbf{fundamental neighbourhoods} when it is clear by context and notation.
\end{definition}

\begin{remark}
    We note that the neighbourhood $U(\widehat{F_a})$ arises as the metric completion in $\widehat{X}$ of a fiber bundle, and thus arises as a subset of $\widehat{X}$. On the other hand, while $U(F_a)$ is still a bundle over $\widehat{F_a^{reg}}$, we do not consider this resolution as a subset of $X$ (the resolution of $\widehat{X}$), and indeed it may not embed into $X$ in a natural way. For our later analysis, it will suffice to consider the space $U(F_a)$ separately from $X$, where the association is via the spaces of sections defined on each space.
\end{remark}

Finally we study the \textit{metric} boundary of the resolved fundamental neighborhood $U(F_a)$. We refer to \cite[\S 2.1]{Jesus2018Wittensgeneral} for a discussion of the product of cone metrics being a cone metric (c.f., \cite[\S 3.2.1]{Jesus2018Wittens})
where a refinement of the original stratification of $\widehat{U_a}$ and a refinement of the corresponding manifold with fibered boundary structure to the resolved space $U_p$ is presented for products of cones.
For the general fundamental neighbourhoods of critical points $F_a$ that we consider, we have the following structure.

\begin{definition}
\label{definition_smooth_boundary_and_singularities}
Given a fundamental neighbourhood $U({F_a})$, we define the fibers of the metric boundary to be 
to be 
\begin{equation}
    \partial U_a = [\{x=1\} \times {Z_{a,s}} \times {C_r(Z_{a,u})}] \cup [{C_x({Z_{a,s}})} \times \{r=1\} \times Z_{a,u} ] 
\end{equation}
which are the fibers of the fibration 
\begin{equation}
    { \partial U_a} - {\partial U(F_a)} \xrightarrow{{\partial \phi_{F_a}}} F_a^{reg}
\end{equation}
where this is the fibration obtained by restricting the fibration ${\phi_{F_a}}$ in \eqref{equation_fibration_critical_components_resolved} to the metric boundaries of the fibers.
\end{definition}

It is clear that under the blow-down map, the image of the metric boundary are the boundary components of co-dimension 1 (as opposed to the singular components of higher codimension) of the fundamental neighbourhood. We will see that this is where we need to apply boundary conditions (supplementing the ideal boundary conditions chosen for the sections on the entire space $X$) to fix a domain for the local Hilbert complexes.

Finally we add a simplifying hypothesis for our later analysis.
\begin{definition}[\textbf{Flatness assumption}]
\label{Definition_flatness_assumption}
     Given a stratified Morse-Bott function $h$ on $\widehat{X}$, we require every connected component of $\widehat{F_a}\subset \text{crit}(h) = \beta(|dh|_{g_w}^{-1}(0))$ satisfies that the `tubular neighborhood' $\mathcal{T}_{F_a^{reg}}$ of $F_a^{reg}$, is a flat bundle i.e. admits a flat connection that extends to a flat wedge connection on $F_a$. Notice that this is a non-trivial hypothesis even in the case of smooth (i.e. non-stratified) $X$.
\end{definition}

Flatness conditions of this sort have been used previously in the literature to simply techniques as well as explicit formulas, for instance in \cite{BanaglFlatfiber2013}, and we refer the reader to that article for interesting examples of spaces which also admit actions of Lie groups, including certain flag manifolds.
The following proposition shows that there are model metrics near critical point sets of \textit{flat} stratified Morse-Bott functions.

\begin{proposition}
\label{Proposition_model_metric_existence}
    Let $X$ be a resolved stratified pseudomanifold with a wedge metric $g$ on $X^{reg}$ and a flat stratified Morse-Bott function $h$. Then there exists another wedge metric $g^\prime$ which on the fibration over 
    connected components $F_a^{reg}$ of the critical point set is of the form 
    \begin{equation}
    g^\prime \big|_{U(F_a)} = \phi_{F_a}^*g \big|_{F_a} + g \big|_{U_a}
\end{equation}
where $U_a$ is the fiber over the regular part of $F_a$.
\end{proposition}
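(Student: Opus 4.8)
The plan is to construct $g'$ by a partition-of-unity patching argument that exploits the flatness hypothesis in Definition~\ref{Definition_flatness_assumption}. First I would fix a connected component $\widehat{F_a}$ of $\mathrm{crit}(h)$ and work over its fundamental neighbourhood $U(F_a)$, which by Definition~\ref{definition_resolved_fundamental_neighbourhood} is a fiber bundle $U_a - U(F_a)\xrightarrow{\phi_{F_a}} F_a^{reg}$ with fiber $U_a = U_{a,s}\times U_{a,u}$, each factor a resolved truncated cone $[0,1]_x\times Z_{a,s}$, resp. $[0,1]_r\times Z_{a,u}$. Because the flatness assumption guarantees that $\mathcal{T}_{F_a^{reg}}$ is a flat bundle — i.e. its structure group reduces to a discrete group acting by wedge isometries of the model fiber — I can choose a fiber metric $g|_{U_a}$ that is literally constant (isometric) from fiber to fiber. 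Concretely, pull back a fixed wedge metric on the model fiber $U_a$ through the flat trivializing transition functions; this is well-defined precisely because those transition functions preserve the chosen fiber metric. On the base $F_a^{reg}$ I take $\phi_{F_a}^*(g|_{F_a})$, the pullback of the restriction of the given metric $g$ to the critical set. Their sum $g' := \phi_{F_a}^*(g|_{F_a}) + g|_{U_a}$ is then a bona fide submersion-type wedge metric on $U(F_a)$ of the stated product form, non-degenerate on ${}^wT^*U(F_a)$ by construction, and compatible with the iterated fibration structure because each cone factor contributes its own radial variable and the flat fiber metric degenerates in the expected conic way at $x=0$ and $r=0$.

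Next I would globalize. Outside a slightly larger neighbourhood $\mathcal{T}_{F_a^{reg}}$ (say the region $\{\rho_{F_a} \geq 2\}$) I keep the original metric $g$; on the overlap region $\{1 \leq \rho_{F_a} \leq 2\}$ I interpolate $g' = \chi\, g_{\mathrm{model}} + (1-\chi)\, g$ using a cutoff $\chi \in \mathcal{C}^\infty_\Phi(X)$ depending only on $\rho_{F_a}$, with $\chi \equiv 1$ near the critical set and $\chi \equiv 0$ near $\rho_{F_a} = 2$. Since the space of wedge metrics on a fixed wedge tangent bundle is an open convex cone (a convex combination of two positive-definite symmetric forms on ${}^wT X$ is again positive-definite), this interpolation is again a wedge metric; and because the cutoff is $\Phi$-smooth and constant along the fibers of every boundary fibration in the overlap region, the result lies in the correct class of exact wedge metrics. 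Doing this over the finitely many components $\widehat{F_a}$ — whose fundamental neighbourhoods can be taken disjoint after shrinking — yields a globally defined wedge metric $g'$ on $X^{reg}$ equal to $\phi_{F_a}^*(g|_{F_a}) + g|_{U_a}$ on each $U(F_a)$.

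The main obstacle I anticipate is not the patching but establishing that the flatness hypothesis genuinely lets one choose a \emph{fiberwise-isometric} fiber metric, in particular that the structure-group reduction descends compatibly through the iterated cone structure of the fiber $U_a$ — one must check that the flat transition functions act by stratified wedge isometries of $U_a = C(\widehat{Z_{a,u}})\times C(\widehat{Z_{a,s}})$ preserving both radial variables, so that a fixed choice of totally-geodesic wedge metric on the model cone factors is actually preserved, and so that this choice propagates over the (itself possibly stratified) base $F_a^{reg}$ in a way compatible with the control data of $\widehat{X}$. Once that structural point is in hand, verifying that $g'$ is an exact wedge metric (that $g' - g'_{w,pt} = \mathcal{O}(x)$ near each boundary face) is a routine check, since the model piece is of product type by design and the interpolation error is supported away from the singular strata.
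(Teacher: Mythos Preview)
Your proposal is correct and follows essentially the same route as the paper: build a model product metric on each $U(F_a)$ using the flat structure to make the fiber metrics isometric, then glue to $g$ with a $\mathcal{C}^\infty_\Phi$ cutoff. The only difference is phrasing---the paper picks the fiber metric at one point and parallel transports it along $F_a^{reg}$ via the flat connection (so the resulting metric is preserved by construction), whereas you assume up front that the discrete structure group acts by wedge isometries; these are equivalent once you realize the flat connection lets you \emph{define} the fiber metric so that the holonomy preserves it, which resolves the obstacle you flag at the end.
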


\begin{proof}
We will modify the metric $g$ near the critical point sets to obtain a new metric. It suffices to describe the modification near a single connected component $F_a$ of the critical point set.

It is easy to find a cutoff function $\chi_a \in C^{\infty}_{\Phi}(X)$ which is identically $0$ outside a fundamental neighbourhood of $F_a$ and is identically $1$ inside a smaller fundamental neighbourhood. If we can find a model metric $g_m$ on the support of $\chi$, then the new metric is given by
\begin{equation}
    g^\prime = \chi g_m + (1-\chi)g 
\end{equation}
    
We can construct such a metric $g_m$ as follows. Given the metric $g$, we can restrict it to the base of the fibration over the regular part $F_a^{reg}$ and freeze coefficients along the normal fibers and extend it to a wedge metric on the fundamental neighbourhood by homogenously extending on the fibers to obtain a local metric $g_1$.
Observe that both the original metric $g$ and $g_1$ coincide on $F_a^{reg}$, and thus the latter admits an extension to $F_a$. We can now modify the metric $g_1$ restricted to $F_a^{reg}$ so that the metric on the links $Z_1$ and $Z_2$ corresponding to the attracting and expanding conic factors are isometric over each link. 
If the metrics on each fiber over $F_a^{reg}$ are not isometric, we pick the metric over one fiber and parallel transport it along $F_a^{reg}$ using the flat connection (see Definition \ref{Definition_flatness_assumption}).
This yields the \textit{model} metric
\begin{equation}
    g_2=g\big|_{F_a^{reg}} + g\big|_{U_a}
\end{equation}
where $g\big|_{F_a^{reg}}$ is the metric restricted to tangent vectors of $F_a^{reg}$, and extends to a wedge metric at the resolved singularities of $F_a$, while the metrics on the fibers for $g,g_2$ remain non-degenerate wedge metrics as well. Since the fiber metrics of $g_2$ are all isometric, we 
have the desired model metric at $F_a$. Similarly one can do the local modifications at every connected component of the critical point set to obtain the desired metric on $X$.
\end{proof}

\section{Hilbert complexes and cohomology}
\label{section_Hilbert_complexes_and_cohomology}

In this section, we discuss twisted de Rham complexes on stratified pseudomanifolds and their restrictions to fundamental neighbourhoods of critical points. First, we review some facts about abstract Hilbert complexes. Then, we review twisted de Rham complexes on stratified pseudomanifolds before describing local complexes on fundamental neighbourhoods and the corresponding cohomology groups. We then briefly study Poincar\'e dual complexes before reviewing some results on polynomial Lefschetz supertraces that are key to proving the Morse and Lefschetz-Morse inequalities.

%+++++++++++++++++++++++++++++++++++++++++++++++++++++++++++++++++++++++++++++%
\subsection{Abstract Hilbert complexes}
\label{subsection_abstract_hilbert_complexes}
We define Hilbert complexes following \cite{bru1992hilbert}. 

\begin{definition}
A \textbf{\textit{Hilbert complex}} is a complex $\mathcal{P}=(H_*,\mathcal{D}(P_*),P_*)$ of the form:
\begin{equation}
    0 \rightarrow \mathcal{D}(P_0) \xrightarrow{P_0} \mathcal{D}(P_1) \xrightarrow{P_1} \mathcal{D}(P_2) \xrightarrow{P_2} ... \xrightarrow{P_{n-1}} \mathcal{D}(P_n) \rightarrow 0,
\end{equation}
where each map $P_k$ is a closed 
operator which is called the differential, such that:
\begin{itemize}
    \item the domain $\mathcal{D}(P_k)$ of $P_k$ is dense in the separable Hilbert space $H_k$,
    \item the range of $P_k$ satisfies $\operatorname{ran}(P_k) \subset \mathcal{D}(P_{k+1})$,
    \item $P_{k+1} \circ P_k = 0$ for all $k$.
\end{itemize}
\end{definition}

We will often denote such a complex by $\mathcal{P}=(H,\mathcal{D}(P),P)$ without explicitly denoting the grading. 
We shall sometimes denote the complex as $\mathcal{P}(X)$ when the Hilbert spaces are sections of a vector bundle on the resolution of a stratified pseudomanifold $\widehat{X}$, and we say that the \textit{\textbf{Hilbert complex $\mathcal{P}(X)$ is associated to the space $X$}}.
The \textit{\textbf{cohomology groups}} of a Hilbert complex are defined to be $\mathcal{H}^k(\mathcal{P}):= \ker(P_k)/\operatorname{ran}(P_{k-1})$. We shall often use the notation $\mathcal{H}^k$, where the complex used is clear from the context and $\mathcal{H}^k(\mathcal{P}(X))$ when the space needs to be specified (including spaces with boundary when they come up later on). If these groups are finite dimensional in each degree, we say that it is a \textit{\textbf{Fredholm complex}}.

For every Hilbert complex $\mathcal{P}$ there is an \textit{\textbf{adjoint Hilbert complex}} $\mathcal{P}^*$, given by
\begin{equation}
\label{adjoint_complex}
    0 \rightarrow \mathcal{D}((P_{n-1})^*) \xrightarrow{(P_{n-1})^*} \mathcal{D}((P_{n-2})^*) \xrightarrow{(P_{n-2})^*} \mathcal{D}((P_{n-3})^*) \xrightarrow{(P_{n-3})^*} ... \xrightarrow{(P_{1})^*} H_0 \rightarrow 0
\end{equation}
where the differentials are $P_k^*: \text{Dom}(P^*_k) \subset H_{k+1} \rightarrow H_k$, the Hilbert space adjoints of the differentials of $\mathcal{P}$. That is, the Hilbert space in degree $k$ of the adjoint complex $\mathcal{P}^*$ is the Hilbert space in degree $n-k$ of the complex $\mathcal{P}$, and the operator in degree $k$ of $\mathcal{P}^*$ is the adjoint of the operator in degree $(n-1-k)$ of $\mathcal{P}$. The corresponding cohomology groups of $\mathcal{P}^*(H,P^*)$ are $\mathcal{H}^k(H, (P)^*) := \ker(P^*_{n-k-1})/\operatorname{ran}(P^*_{n-k})$.
For the case of the de Rham complex $(L^2\Omega^k(X),d_{\max})$, the adjoint complex $\mathcal{P}^*$ is the complex $\mathcal{Q}=(L^2\Omega^{n-k}(X),\delta_{\min})$, since the operators $d$ and $\delta$ are formal adjoints of each other. 

\begin{remark}
    In \cite{jayasinghe2023l2}, we referred to the adjoint Hilbert complexes introduced in this article as dual Hilbert complexes. In light of differences between certain Hilbert complexes associated with \textit{dualizing operators} such as the Hodge star operator for certain choices of domains on wedge and other singular metrics, we will distinguish between \textit{dual complexes} and \textit{adjoint complexes}.
\end{remark}

\subsubsection{Dirac complexes, Laplace type operators and domains}

We can form a two-step complex where the Hilbert spaces are $H^+ =\bigoplus_{q=even} H_q$, and $H^- = \bigoplus_{q=odd} H_q$.
This leads to a \textit{\textbf{wedge Dirac complex}} as introduced in Definition 3.3 \cite{jayasinghe2023l2}, 
\begin{equation}
    0 \rightarrow \mathcal{D}(D^{+}) \xrightarrow{D^+} \mathcal{D}(D^{-}) \rightarrow 0
\end{equation}
where $D^{\pm}$ is the spin$^\mathbb{C}$-Dirac operator restricted to the spaces, together with the domain for the operator $D= \sqrt{2} (P+P^*)$ given by
\begin{equation}
    \label{Domain_Dirac_first}
    \mathcal{D}(D)=\mathcal{D}(P) \cap \mathcal{D}(P^*).
\end{equation}
There is an associated \textbf{\textit{Laplace-type operator}} $\Delta_k = P_{k}^*P_k+P_{k-1}P_{k-1}^*$ in each degree, which is a self adjoint operator with domain
\begin{equation}
\label{Laplacian_P_type}
\mathcal{D}(\Delta_k) = \{ v \in \mathcal{D}(P_k) \cap \mathcal{D}(P_{k-1}^*) : P_k v \in \mathcal{D}(P_k^*), P^*_{k-1} v \in \mathcal{D}(P_{k-1}) \},
\end{equation}
and with nullspace
\begin{equation}
   \widehat{\mathcal{H}}^k(\mathcal{P}):= \ker(\Delta_k) = \ker(P_k) \cap \ker(P_{k-1}^*).
\end{equation}
The Kodaira decomposition which we present below in Proposition \ref{Kodaira_decomposition} identifies this with the cohomology of the complex $\mathcal{H}^k(\mathcal{P})$.
We observe that this Laplace-type operator can be written as the square of the associated Dirac-type operator $D=(P+P^*)$, restricted to each degree to obtain $\Delta_k$, and that the domain can be written equivalently as
\begin{equation}
\label{Laplacian_D_type}
\mathcal{D}(\Delta_k) = \{ v \in \mathcal{D}(D) : D v \in \mathcal{D}(D) \}.
\end{equation}
The null space is isomorphic to the cohomology for Fredholm complexes.

%Hodge decomposition.
\begin{proposition}
\label{Kodaira_decomposition}
For any Hilbert complex $\mathcal{P}=(H, P)$ we have the \textbf{\textit{weak Kodaira decomposition}}
\[H_k = \widehat{\mathcal{H}}^k(H,P) \oplus \overline{\operatorname{ran}(P_{k-1})} \oplus \overline{\operatorname{ran}(P_k^*)}.\]
\end{proposition}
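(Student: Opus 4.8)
The plan is to establish the weak Kodaira decomposition purely at the level of abstract Hilbert spaces and closed operators, using only that $P_k$ is closed, densely defined, and satisfies $P_{k+1}P_k=0$. First I would record the elementary fact that for any closed densely defined operator $T\colon \mathcal{D}(T)\subseteq H\to H'$ one has the orthogonal decomposition $H' = \overline{\operatorname{ran}(T)}\oplus \ker(T^*)$, equivalently $(\overline{\operatorname{ran}(T)})^{\perp}=\ker(T^*)$; this is a standard consequence of the definition of the adjoint, together with $\overline{\operatorname{ran}(T)}^{\perp\perp}=\overline{\operatorname{ran}(T)}$. Applying this with $T=P_{k-1}$ gives $H_k = \overline{\operatorname{ran}(P_{k-1})}\oplus \ker(P_{k-1}^*)$, and applying it with $T=P_k^*$ (which is also closed and densely defined, since $P_k$ is) gives $H_k = \overline{\operatorname{ran}(P_k^*)}\oplus \ker(P_k)$.

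Next I would check that the three claimed summands are mutually orthogonal. Orthogonality of $\overline{\operatorname{ran}(P_{k-1})}$ and $\overline{\operatorname{ran}(P_k^*)}$ is the crucial point: for $u\in\mathcal{D}(P_{k-1})$ and $v\in\mathcal{D}(P_k^*)$ we have $\langle P_{k-1}u, P_k^* v\rangle$, and since $P_{k-1}u\in\operatorname{ran}(P_{k-1})\subseteq\ker(P_k)$ (because $\operatorname{ran}(P_{k-1})\subseteq\mathcal{D}(P_k)$ and $P_kP_{k-1}=0$), we may write $\langle P_{k-1}u, P_k^* v\rangle = \langle P_k(P_{k-1}u), v\rangle = 0$; passing to closures gives the orthogonality of the two closed ranges. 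Orthogonality of $\widehat{\mathcal{H}}^k(H,P)=\ker(P_k)\cap\ker(P_{k-1}^*)$ with each range summand is then immediate from the two decompositions in the previous paragraph: $\ker(P_{k-1}^*)\perp\overline{\operatorname{ran}(P_{k-1})}$ and $\ker(P_k)\perp\overline{\operatorname{ran}(P_k^*)}$.

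Finally I would verify that the orthogonal direct sum of the three summands is all of $H_k$. From $H_k = \overline{\operatorname{ran}(P_{k-1})}\oplus\ker(P_{k-1}^*)$, it suffices to show $\ker(P_{k-1}^*) = \widehat{\mathcal{H}}^k(H,P)\oplus\overline{\operatorname{ran}(P_k^*)}$. For this, intersect the decomposition $H_k = \ker(P_k)\oplus\overline{\operatorname{ran}(P_k^*)}$ with the closed subspace $\ker(P_{k-1}^*)$: since $\overline{\operatorname{ran}(P_k^*)}\subseteq\ker(P_{k-1}^*)$ (as $\operatorname{ran}(P_k^*)\subseteq\ker(P_{k-1}^*)$, which follows from $P_{k-1}^*P_k^*=(P_kP_{k-1})^*=0$ on the appropriate domains, or directly because $\operatorname{ran}(P_k^*)\perp\operatorname{ran}(P_{k-1})$ shown above so $\operatorname{ran}(P_k^*)\subseteq(\overline{\operatorname{ran}(P_{k-1})})^\perp=\ker(P_{k-1}^*)$), a vector $w\in\ker(P_{k-1}^*)$ decomposes as $w=w_1+w_2$ with $w_1\in\ker(P_k)$, $w_2\in\overline{\operatorname{ran}(P_k^*)}\subseteq\ker(P_{k-1}^*)$, forcing $w_1=w-w_2\in\ker(P_{k-1}^*)$, i.e. $w_1\in\ker(P_k)\cap\ker(P_{k-1}^*)=\widehat{\mathcal{H}}^k(H,P)$. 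Assembling these identifications gives the claimed three-fold orthogonal decomposition. The only mild subtlety — the step I expect to require the most care — is the correct handling of domains when invoking $\operatorname{ran}(P_k^*)\subseteq\ker(P_{k-1}^*)$ and the adjoint-of-composition identity, since $P_{k-1}^*P_k^*$ is a composition of unbounded operators; I would sidestep this by deriving that inclusion from the orthogonality of ranges rather than from manipulating adjoints directly, which keeps the argument clean.
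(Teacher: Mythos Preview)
Your argument is correct and is essentially the standard proof of the weak Kodaira decomposition. The paper does not give its own proof of this proposition but simply cites it as Lemma~2.1 of Br\"uning--Lesch \cite{bru1992hilbert}; the argument you have written is precisely the one found there (and in any functional-analytic treatment of Hilbert complexes), so there is nothing further to compare.
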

This is Lemma 2.1 of \cite{bru1992hilbert}. 

% Proposition 2 of Francesco
\begin{proposition}
\label{Fredholm_is_closed_range}
If the cohomology of a Hilbert complex $\mathcal{P}=(H_*, P_*)$ is finite dimensional then, for all $k$, $\operatorname{ran}(P_{k-1})$ is closed and therefore $\mathcal{H}^k(\mathcal{P}) \cong \widehat{\mathcal{H}}^k(\mathcal{P})$. 
\end{proposition}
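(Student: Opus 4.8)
The plan is to combine the weak Kodaira decomposition (Proposition \ref{Kodaira_decomposition}) with a soft functional-analytic lemma: a bounded operator between Banach spaces whose range has finite algebraic codimension has closed range.

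First I would record two elementary orthogonality facts in $H_k$: the inclusion $\overline{\operatorname{ran}(P_{k-1})}\subseteq\ker(P_k)$ (from $P_kP_{k-1}=0$ and closedness of $\ker(P_k)$), and $\operatorname{ran}(P_k^*)\perp\ker(P_k)$ (since $\langle v,P_k^*w\rangle=\langle P_kv,w\rangle=0$ for $v\in\ker P_k$). Feeding these into Proposition \ref{Kodaira_decomposition} gives the orthogonal splitting $\ker(P_k)=\widehat{\mathcal{H}}^k(\mathcal{P})\oplus\overline{\operatorname{ran}(P_{k-1})}$. Dividing by $\operatorname{ran}(P_{k-1})\subseteq\overline{\operatorname{ran}(P_{k-1})}$, and using that $\widehat{\mathcal{H}}^k(\mathcal{P})\cap\operatorname{ran}(P_{k-1})=0$ (an element of $\ker P_{k-1}^*$ is orthogonal to $\operatorname{ran}(P_{k-1})$), I get a vector-space isomorphism $\mathcal{H}^k(\mathcal{P})\cong\widehat{\mathcal{H}}^k(\mathcal{P})\oplus\big(\overline{\operatorname{ran}(P_{k-1})}/\operatorname{ran}(P_{k-1})\big)$. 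Hence the hypothesis $\dim\mathcal{H}^k(\mathcal{P})<\infty$ forces $\operatorname{ran}(P_{k-1})$ to have finite algebraic codimension, say $m_k$, inside the Hilbert space $\mathcal{R}:=\overline{\operatorname{ran}(P_{k-1})}$.

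Next I would realize $\operatorname{ran}(P_{k-1})$ as the range of a bounded injective map between Hilbert spaces. Let $T$ be the restriction of $P_{k-1}$ to $\mathcal{D}(P_{k-1})\cap\ker(P_{k-1})^\perp$. Since $\ker(P_{k-1})\subseteq\mathcal{D}(P_{k-1})$, subtracting the $\ker(P_{k-1})$-component of any $x\in\mathcal{D}(P_{k-1})$ stays in the domain, so $T$ is injective with $\operatorname{ran}(T)=\operatorname{ran}(P_{k-1})$, and it is closed as the restriction of a closed operator to a closed subspace of its domain. Equipping $\mathcal{D}(T)$ with the graph inner product turns it into a Hilbert space $\mathcal{D}_T$ on which $T:\mathcal{D}_T\to\mathcal{R}$ is bounded, injective, and of codimension $m_k$. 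Now apply the lemma: choosing $f_1,\dots,f_{m_k}$ spanning a complement of $\operatorname{ran}(T)$ in $\mathcal{R}$, the map $\widetilde T:\mathcal{D}_T\oplus\mathbb{C}^{m_k}\to\mathcal{R}$, $\widetilde T(x,\lambda)=Tx+\sum_j\lambda_jf_j$, is bounded and surjective, hence open, and identifies $\mathcal{R}$ topologically with $(\mathcal{D}_T\oplus\mathbb{C}^{m_k})/\ker\widetilde T$; since $\ker\widetilde T\cap(\mathcal{D}_T\oplus 0)=0$ by injectivity of $T$, the space $\ker\widetilde T$ is finite-dimensional, so $(\mathcal{D}_T\oplus 0)+\ker\widetilde T$ is closed (closed subspace plus finite-dimensional subspace), and its image $\widetilde T(\mathcal{D}_T\oplus 0)=\operatorname{ran}(T)$ is closed in $\mathcal{R}$. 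Therefore $\operatorname{ran}(P_{k-1})=\operatorname{ran}(T)=\mathcal{R}=\overline{\operatorname{ran}(P_{k-1})}$, i.e. $\operatorname{ran}(P_{k-1})$ is closed. Feeding closedness back into the splitting of the first step gives $\ker(P_k)=\widehat{\mathcal{H}}^k(\mathcal{P})\oplus\operatorname{ran}(P_{k-1})$, whence $\mathcal{H}^k(\mathcal{P})\cong\widehat{\mathcal{H}}^k(\mathcal{P})$.

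The only nontrivial point is the open-mapping lemma in the third paragraph; the rest is bookkeeping with the Kodaira decomposition and the closed-graph structure of $P_{k-1}$. This is in substance the closed-range assertion of \cite{bru1992hilbert}, but I would include the short argument above for completeness.
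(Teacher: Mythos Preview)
Your argument is correct. The paper does not give its own proof here; it simply records that the statement is Corollary~2.5 of \cite{bru1992hilbert}, exactly as you anticipated in your final paragraph. Your write-up is a clean, self-contained proof of that corollary: the decomposition $\ker P_k=\widehat{\mathcal H}^k\oplus\overline{\operatorname{ran}(P_{k-1})}$ together with the algebraic splitting $\mathcal H^k\cong\widehat{\mathcal H}^k\oplus\bigl(\overline{\operatorname{ran}(P_{k-1})}/\operatorname{ran}(P_{k-1})\bigr)$ reduces the claim to the standard fact that a bounded operator with finite-codimensional range has closed range, and your open-mapping argument for that fact is fine.
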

This is corollary 2.5 of \cite{bru1992hilbert}. The next result justifies the use of the term \textit{Fredholm complex}.
%++++++++++++++++++++++++++++++++++++++++++++++++++
% Proposition 4 of Francesco
\begin{proposition}
A Hilbert complex $(H_k, P_k)$, $k=0,...,n$ is a Fredholm complex if and only if, for each $k$, the Laplace-type operator $\Delta_k$ with the domain defined in \eqref{Laplacian_P_type} is a Fredholm operator.
\end{proposition}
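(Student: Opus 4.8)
The plan is to reduce everything to the weak Kodaira decomposition (Proposition \ref{Kodaira_decomposition}) together with two standard facts about a closed, densely defined operator $T$: first, $\ker(T^{*}T)=\ker T$, hence $\overline{\operatorname{ran}(T^{*}T)}=\overline{\operatorname{ran}(T^{*})}$; second, the closed range theorem, so $\operatorname{ran}(T)$ is closed iff $\operatorname{ran}(T^{*})$ is, in which case moreover $\operatorname{ran}(T^{*}T)=\operatorname{ran}(T^{*})$ and a preimage under $T^{*}T$ of a given element of $\operatorname{ran}(T^{*})$ can be chosen in $(\ker T)^{\perp}\cap\mathcal{D}(T^{*}T)$. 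I would apply these to $T=P_{k}$, giving $\operatorname{ran}(P_{k}^{*}P_{k})=\operatorname{ran}(P_{k}^{*})$, and to $T=P_{k-1}^{*}$, whose Hilbert-space adjoint is $P_{k-1}$ since $P_{k-1}$ is closed, giving $\operatorname{ran}(P_{k-1}P_{k-1}^{*})=\operatorname{ran}(P_{k-1})$. Throughout I use that $\Delta_{k}$ is self-adjoint with $\ker\Delta_{k}=\widehat{\mathcal{H}}^{k}(\mathcal{P})$ and that $\mathcal{D}(\Delta_{k})\subseteq\mathcal{D}(P_{k-1}P_{k-1}^{*})\cap\mathcal{D}(P_{k}^{*}P_{k})$, both immediate from \eqref{Laplacian_P_type}, as well as the standard identity $(\ker P_{k})^{\perp}=\overline{\operatorname{ran}(P_{k}^{*})}$.

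For the forward implication, assume the complex is Fredholm. By Proposition \ref{Fredholm_is_closed_range} each $\operatorname{ran}(P_{k-1})$ is closed and $\ker\Delta_{k}=\widehat{\mathcal{H}}^{k}(\mathcal{P})\cong\mathcal{H}^{k}(\mathcal{P})$ is finite-dimensional; by the closed range theorem each $\operatorname{ran}(P_{k}^{*})$ is closed too, so Proposition \ref{Kodaira_decomposition} is an orthogonal sum $H_{k}=\ker\Delta_{k}\oplus\operatorname{ran}(P_{k-1})\oplus\operatorname{ran}(P_{k}^{*})$ of closed subspaces. I then claim $\operatorname{ran}(\Delta_{k})=\operatorname{ran}(P_{k-1})\oplus\operatorname{ran}(P_{k}^{*})$, which is an orthogonal sum of closed subspaces and hence closed. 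The inclusion ``$\subseteq$'' is immediate from $\Delta_{k}=P_{k}^{*}P_{k}+P_{k-1}P_{k-1}^{*}$. For ``$\supseteq$'': given $w_{1}\in\operatorname{ran}(P_{k-1})$, the fact applied to $T=P_{k-1}^{*}$ gives $v_{2}\in(\ker P_{k-1}^{*})^{\perp}\cap\mathcal{D}(P_{k-1}P_{k-1}^{*})=\operatorname{ran}(P_{k-1})\cap\mathcal{D}(P_{k-1}P_{k-1}^{*})$ with $P_{k-1}P_{k-1}^{*}v_{2}=w_{1}$; writing $v_{2}=P_{k-1}c$ one has $P_{k}v_{2}=0$, and using $\operatorname{ran}(P_{k-1})\subseteq\mathcal{D}(P_{k})$ one checks against \eqref{Laplacian_P_type} that $v_{2}\in\mathcal{D}(\Delta_{k})$ with $\Delta_{k}v_{2}=w_{1}$. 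Symmetrically, given $w_{2}\in\operatorname{ran}(P_{k}^{*})$, the fact applied to $T=P_{k}$ gives $v_{1}\in\operatorname{ran}(P_{k}^{*})\cap\mathcal{D}(P_{k}^{*}P_{k})$ with $P_{k}^{*}P_{k}v_{1}=w_{2}$; here $v_{1}\in\mathcal{D}(P_{k-1}^{*})$ and $P_{k-1}^{*}v_{1}=0$ because $\operatorname{ran}(P_{k}^{*})\subseteq\mathcal{D}(P_{k-1}^{*})$ is precisely the axiom ``range $\subseteq$ domain'' for consecutive differentials of the adjoint Hilbert complex $\mathcal{P}^{*}$, along which those differentials compose to zero, so again $v_{1}\in\mathcal{D}(\Delta_{k})$ and $\Delta_{k}v_{1}=w_{2}$. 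Then $\Delta_{k}(v_{1}+v_{2})=w_{1}+w_{2}$. Thus $\Delta_{k}$ has closed range and finite-dimensional kernel, and by self-adjointness $\operatorname{coker}\Delta_{k}\cong\ker\Delta_{k}$ is also finite-dimensional, so $\Delta_{k}$ is Fredholm.

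For the converse, assume every $\Delta_{k}$ is Fredholm, so $\ker\Delta_{k}=\widehat{\mathcal{H}}^{k}(\mathcal{P})$ is finite-dimensional and $\operatorname{ran}(\Delta_{k})$ is closed. Self-adjointness gives $\overline{\operatorname{ran}(\Delta_{k})}=(\ker\Delta_{k})^{\perp}$, which by Proposition \ref{Kodaira_decomposition} equals $\overline{\operatorname{ran}(P_{k-1})}\oplus\overline{\operatorname{ran}(P_{k}^{*})}$; since $\operatorname{ran}(\Delta_{k})$ is closed we get $\operatorname{ran}(\Delta_{k})=\overline{\operatorname{ran}(P_{k-1})}\oplus\overline{\operatorname{ran}(P_{k}^{*})}$. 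Now apply the orthogonal projection $\pi$ of $H_{k}$ onto $\overline{\operatorname{ran}(P_{k-1})}$: for $v\in\mathcal{D}(\Delta_{k})$ one has $P_{k}^{*}P_{k}v\in\overline{\operatorname{ran}(P_{k}^{*})}$ and $P_{k-1}P_{k-1}^{*}v\in\overline{\operatorname{ran}(P_{k-1})}$, so $\pi(\Delta_{k}v)=P_{k-1}P_{k-1}^{*}v$, while $\pi$ maps $\operatorname{ran}(\Delta_{k})$ onto $\overline{\operatorname{ran}(P_{k-1})}$; hence $\overline{\operatorname{ran}(P_{k-1})}\subseteq\operatorname{ran}(P_{k-1}P_{k-1}^{*})\subseteq\operatorname{ran}(P_{k-1})$, forcing $\operatorname{ran}(P_{k-1})$ to be closed. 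Since this holds for every $k$, Proposition \ref{Kodaira_decomposition} gives $\ker P_{k}=\widehat{\mathcal{H}}^{k}(\mathcal{P})\oplus\operatorname{ran}(P_{k-1})$, so $\mathcal{H}^{k}(\mathcal{P})\cong\widehat{\mathcal{H}}^{k}(\mathcal{P})=\ker\Delta_{k}$ is finite-dimensional; thus the complex is Fredholm.

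The main obstacle I anticipate is the unbounded-operator bookkeeping in the identification $\operatorname{ran}(\Delta_{k})=\operatorname{ran}(P_{k-1})\oplus\operatorname{ran}(P_{k}^{*})$ of the forward direction: $\Delta_{k}$ need not preserve the three summands of Proposition \ref{Kodaira_decomposition} at the level of domains, so one must verify that the constructed preimages $v_{1},v_{2}$ genuinely lie in $\mathcal{D}(\Delta_{k})$ as specified by \eqref{Laplacian_P_type}, and it is exactly there that the adjoint-complex axiom $\operatorname{ran}(P_{k}^{*})\subseteq\mathcal{D}(P_{k-1}^{*})$ is needed. The converse sidesteps this by projecting ranges rather than inverting $\Delta_{k}$. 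Alternatively, one may simply cite Theorem 2.4 of \cite{bru1992hilbert}, of which this proposition is a restatement.
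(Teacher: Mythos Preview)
Your argument is correct. The paper does not give a proof of this proposition at all: it simply records ``This is Lemma 1 on page 203 of \cite{schulze1986elliptic}.'' You have instead supplied a self-contained proof via the weak Kodaira decomposition, carefully verifying the domain conditions in \eqref{Laplacian_P_type} for the constructed preimages (using in particular that $\operatorname{ran}(P_k^*)\subseteq\ker(P_{k-1}^*)$, which follows from $\operatorname{ran}(P_{k-1})\subseteq\ker P_k$ by taking orthogonal complements), and your converse via projection of $\operatorname{ran}(\Delta_k)$ onto $\overline{\operatorname{ran}(P_{k-1})}$ is a clean way to force closedness of $\operatorname{ran}(P_{k-1})$ without inverting anything. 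Your closing remark that one could alternatively cite Theorem~2.4 of \cite{bru1992hilbert} is in the same spirit as what the paper actually does; the added value of your write-up is that it makes the unbounded-operator bookkeeping explicit rather than deferring it to a reference.
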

This is Lemma 1 on page 203 of \cite{schulze1986elliptic}. 
Due to these results, we can identify the space of \textit{harmonic elements}, or the elements of the Hilbert space which are in the null space of the Laplace-type operator, with the cohomology of the complex in the corresponding degree. We shall use the same terminology for non-Fredholm complexes which we study as well. 

For Fredholm complexes, the null space of the Laplacian is isomorphic to the cohomology of the complex since the operator has closed range.

% Proposition 3 of Francesco
\begin{proposition}
\label{Kernel_equals_cohomology}
A Hilbert complex $\mathcal{P}=(H, P)$, is a Fredholm complex if and only if its adjoint complex, $(\mathcal{P}^*)$ is Fredholm.
If it is Fredholm, then 
\begin{equation}
    \mathcal{H}^k(\mathcal{P}) \cong \mathcal{H}^{n-k}(\mathcal{P}^*).
\end{equation}
\end{proposition}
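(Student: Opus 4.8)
The plan is to pass to the reduced (harmonic) cohomology groups $\widehat{\mathcal{H}}^k$ and to exploit the fact that these are unchanged when one takes the adjoint complex. Write the differentials of $\mathcal{P}^*$ as $A_k := (P_{n-1-k})^*\colon H_{n-k}\to H_{n-k-1}$, so that $\widehat{\mathcal{H}}^k(\mathcal{P}^*)=\ker(A_k)\cap\ker(A_{k-1}^*)$. Since each $P_j$ is closed and densely defined, $P_j^{**}=P_j$, and in particular $A_{k-1}^*=\big((P_{n-k})^*\big)^*=P_{n-k}$. Hence
\begin{equation*}
\widehat{\mathcal{H}}^k(\mathcal{P}^*)=\ker\big((P_{n-1-k})^*\big)\cap\ker(P_{n-k})=\ker(P_{n-k})\cap\ker\big((P_{(n-k)-1})^*\big)=\widehat{\mathcal{H}}^{n-k}(\mathcal{P})
\end{equation*}
as subspaces of $H_{n-k}$. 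This identity is the crux and uses nothing beyond $P^{**}=P$ and the reindexing of the adjoint complex.

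Next I would prove the forward implication. Assume $\mathcal{P}$ is Fredholm. By Proposition \ref{Fredholm_is_closed_range}, every $\operatorname{ran}(P_{k-1})$ is closed and $\mathcal{H}^k(\mathcal{P})\cong\widehat{\mathcal{H}}^k(\mathcal{P})$, so in particular $\widehat{\mathcal{H}}^k(\mathcal{P})$ is finite dimensional for all $k$. By the closed range theorem for closed densely defined operators, closedness of $\operatorname{ran}(P_{k-1})$ is equivalent to closedness of $\operatorname{ran}\big((P_{k-1})^*\big)$, so all ranges appearing in the complex $\mathcal{P}^*$ are closed. Feeding this into the weak Kodaira decomposition of Proposition \ref{Kodaira_decomposition} applied to $\mathcal{P}^*$, namely $H_{n-k}=\widehat{\mathcal{H}}^k(\mathcal{P}^*)\oplus\overline{\operatorname{ran}(A_{k-1})}\oplus\overline{\operatorname{ran}(A_k^*)}$, and combining with the orthogonal decomposition $H_{n-k}=\ker(A_k)\oplus\overline{\operatorname{ran}(A_k^*)}$, gives $\ker(A_k)=\widehat{\mathcal{H}}^k(\mathcal{P}^*)\oplus\overline{\operatorname{ran}(A_{k-1})}$; since $\operatorname{ran}(A_{k-1})$ is already closed we obtain $\mathcal{H}^k(\mathcal{P}^*)=\ker(A_k)/\operatorname{ran}(A_{k-1})\cong\widehat{\mathcal{H}}^k(\mathcal{P}^*)$. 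Together with the first paragraph, $\mathcal{H}^k(\mathcal{P}^*)\cong\widehat{\mathcal{H}}^{n-k}(\mathcal{P})\cong\mathcal{H}^{n-k}(\mathcal{P})$, which is finite dimensional; hence $\mathcal{P}^*$ is Fredholm and, after relabeling $k\mapsto n-k$, $\mathcal{H}^k(\mathcal{P})\cong\mathcal{H}^{n-k}(\mathcal{P}^*)$.

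For the converse I would observe that $(\mathcal{P}^*)^*=\mathcal{P}$: the degree-$k$ differential of $(\mathcal{P}^*)^*$ is the adjoint of the degree-$(n-1-k)$ differential of $\mathcal{P}^*$, namely $\big((P_k)^*\big)^*=P_k$, and the graded Hilbert spaces match under $k\mapsto n-(n-k)=k$. So if $\mathcal{P}^*$ is Fredholm, applying the implication just proved to the complex $\mathcal{P}^*$ shows that $(\mathcal{P}^*)^*=\mathcal{P}$ is Fredholm. The hard part is not conceptual but a matter of care: one must invoke the closed range theorem to transfer closedness of $\operatorname{ran}(P_{k-1})$ to $\operatorname{ran}\big((P_{k-1})^*\big)$, and one must derive $\mathcal{H}^k(\mathcal{P}^*)\cong\widehat{\mathcal{H}}^k(\mathcal{P}^*)$ directly from the Kodaira decomposition rather than by circularly applying Proposition \ref{Fredholm_is_closed_range} to $\mathcal{P}^*$ (whose Fredholmness is exactly what is being established); everything else is bookkeeping with the reindexing $k\leftrightarrow n-k$.
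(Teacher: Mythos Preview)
The paper does not actually supply a proof of this proposition; it is stated as a standard fact about Hilbert complexes (in the same spirit as the surrounding results attributed to \cite{bru1992hilbert} and \cite{schulze1986elliptic}) and is simply followed by the remark about the strong Kodaira decomposition. Your argument is correct and gives a self-contained proof: the identification $\widehat{\mathcal{H}}^k(\mathcal{P}^*)=\widehat{\mathcal{H}}^{n-k}(\mathcal{P})$ via $P^{**}=P$, the transfer of closed range to the adjoint via the closed range theorem, and the non-circular derivation of $\mathcal{H}^k(\mathcal{P}^*)\cong\widehat{\mathcal{H}}^k(\mathcal{P}^*)$ from the weak Kodaira decomposition are exactly the ingredients one would expect, and your care in avoiding an appeal to Proposition~\ref{Fredholm_is_closed_range} for $\mathcal{P}^*$ before its Fredholmness is established is well placed.
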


In particular, for operators with closed range, the reduced cohomology groups are the same as the cohomology groups and are isomorphic to the null space of the Laplace-type operator, in which case the decomposition in Proposition \ref{Kodaira_decomposition} is called the \textbf{\textit{(strong) Kodaira decomposition}}.

\subsection{Twisted de Rham complexes on stratified pseudomanifolds}
\label{subsection_Hilbert_complexes_stratified_pseudomanifolds}

Dirac operators and the Hilbert complexes (in particular the de Rham complex) are not necessarily essentially self-adjoint on singular spaces. All possible domains for the de Rham complex on stratified spaces with wedge metrics are studied in \cite{Albin_hodge_theory_cheeger_spaces}. In this subsection we review these choices of domains, corresponding to ideal boundary conditions which are also called mezzo-perversities. We refer the reader to \cite{BanaglnonWitt2002,Albin_hodge_theory_on_stratified_spaces,albin2017novikov,RefinedintersectionAlbinBanagl2015} for more details.

Consider a wedge differential operator $P_X \in \operatorname{Diff}_w^1(X;E,F)$ acting between sections of two bundles $E$ and $F$.
For example, the de Rham operator $d$ and the Dirac operator $D=d+\delta$ are of wedge type. 

There are two canonical domains which are the \textbf{\textit{minimal domain}},
\begin{equation}
    \mathcal{D}_{\min}(P_X)= \{ u \in L^2(X;E) : \exists (u_n) \subseteq {C}^{\infty}_c(\mathring{X};E) \text{ s.t. }
    u_n \rightarrow u \text{ and } (P_Xu_n) \text{ is } L^2\text{-Cauchy} \},
\end{equation}
and the \textbf{\textit{maximal domain}},
\begin{equation*}
    \mathcal{D}_{\max}(P_X)= \{ u \in L^2(X;E) : (P_Xu) \in L^2(X;F) \},
\end{equation*}
wherein $P_Xu$ is computed distributionally.
Any other closed extension of $P_X$ corresponds to a choice of domain $\mathcal{D}_W(P_X)$ containing the minimal domain and contained in the maximal domain. This determines a choice of domain for the adjoint $P_X^*$ and a self-adjoint domain for $D=P_X+P^*_X$.

\begin{remark}
If $X$ has a wedge metric such that the Dirac operator satisfies the Witt condition, then we can find a wedge metric so that there is a domain satisfying the geometric Witt condition by rescaling the fibers on the links (see \cite{Albin_signature}). Even in the non-Witt case, a similar rescaling can be used to reduce the choices of self-adjoint extensions to cohomological data on the links that we study below, and this is used in \cite{Albin_hodge_theory_on_stratified_spaces,Albin_hodge_theory_cheeger_spaces}.
We follow the same convention.

Taking a metric without rescaling gives operators on domains with more sections, however there are no extra cohomology classes appearing in any of these domains.
\end{remark}

For wedge elliptic operators such as the Hodge de Rham Dirac operator $D_X$,
these domains satisfy the inclusions
\begin{equation}\label{equation_Dmin_Dmax_inclusion}
    \rho_XH^1_e(X;E) \subseteq \mathcal{D}_{\min}(D_X) \subseteq \mathcal{D}_{\max}(D_X) \subseteq H^1_e(X;E),
\end{equation}
where $H^1_e(X;E) = \{ u \in L^2(X;E) : Vu \in L^2(X;E) \text{ for all } V \in  \mathcal{C}^{\infty}(X;\prescript{e}{}TX) \}$ is the edge Sobolev space introduced in \cite{Mazzeo_Edge_Elliptic_1}. 
We now focus on the non-Witt case and discuss all possible \textit{topological} self adjoint extensions, between the maximal and minimal domains introduced above following \cite{Albin_hodge_theory_cheeger_spaces}.

\subsubsection{Case of Depth 1}
We begin by defining mezzo-perversities on a depth-1 stratum $Y$ and then use induction to generalize the definition to stratified spaces of arbitrary depth.
The inclusion \eqref{equation_Dmin_Dmax_inclusion} suggests that an element $u \in \mathcal{D}_{\max}(D)$ has a partial asymptotic expansion (as established in \cite{Albin_hodge_theory_cheeger_spaces}):
\begin{equation}
    u \sim \alpha(u) +dx \wedge \beta(u) + \tilde{u}, \quad \tilde{u} \in x^{1^-}H^{-1}_e(X;E),
\end{equation}
where $x$ is a boundary defining function for the boundary hypersurface that resolves $Y$ in $X$, and $\alpha(u), \beta(u)$ are the orthogonal projections of the forms $u$ and $\iota_{\partial_x}u$, respectively, onto the space
\begin{equation}
    H^{-1/2}(Y; \Lambda^\bullet(T^*Y) \otimes \mathcal{H}^{l/2} (\partial X/Y)),
\end{equation}
where $l$ is the dimension of the link $Z$ at $Y$, and where $x^{1-}H^{-1}_e(X;E) =  \bigcap_{\epsilon > 0} x^{1-\epsilon}H^{-1}_e(X;E)$ consists of elements in the minimal domain of $D$, as is clear by  \eqref{equation_Dmin_Dmax_inclusion}.
\iffalse
where $\alpha$ is the projections of the form $u$ onto $H^{-1/2}(Y; \Lambda^*(T^*Y) \otimes \mathcal{H}^{l/2}(\mathcal{P}(\partial X / Y))$
where $l$ is the dimension of the link $Z$ at $Y$, and where $x^{1-}H^{-1}_e(X;E) =  \bigcap_{\epsilon > 0} x^{1-\epsilon}H^{-1}_e(X;E)$ consists of elements in the minimal domain of $D$ as is clear by  \eqref{equation_Dmin_Dmax_inclusion}.
Similarly $\beta$ is the projection of $\iota_{\partial_x}u$ onto $H^{-1/2}(Y; \Lambda^*(T^*Y) \otimes \mathcal{H}^{l/2}(\mathcal{P}(\partial X / Y))$.
\fi
This asymptotic expansion allows us to define a \textbf{\textit{Cauchy data map}} on the resolved manifold with fibered boundaries
\begin{equation}
    \mathcal{C}_Y(D)(u) = \big(\alpha(u),\beta(u)\big), \text{ for }u \in \mathcal{D}_{\max}(D), 
\end{equation}
which can be identified with the projection
\begin{equation}
   \mathcal{D}_{\max}(D) \longrightarrow \mathcal{D}_{\max}(D)\big/\mathcal{D}_{\min}(D).
\end{equation}
An extension of $P_X=d_X$ is given by a choice of a closed domain $\mathcal{D}_{W}(P_X)$ such that 
\begin{equation}
    \mathcal{D}_{\min}(P_X) \subseteq \mathcal{D}_{W}(P_X) \subseteq \mathcal{D}_{\max}(P_X),
\end{equation}
where the subscript $W$ denotes a choice of flat sub-bundle of $\mathcal{H}^{l/2} (\partial X/Y)$. Equivalently this can be realized as 
\begin{align*}
    \mathcal{D}_{W}(P_X) &= \{ u \in \mathcal{D}_{\max}(P_X): (W^{\perp},\{0\}) \circ \mathcal{C}_Y(u) = 0\},\\
    &=\{u \in \mathcal{D}_{\max}(P_X): \alpha(u) \in H^{-1/2}(Y; \Lambda^\bullet (T^*Y) \otimes W)\}
\end{align*}
where by abuse of notation $W$ is an closed linear operator acting on the Cauchy data $\mathcal{C}_Y(u)$, which is given by
\[\mathcal{C}_{Y}(D)(u) = (\alpha(u),\beta(u)), u \in \mathcal{D}_{\max,W}(D_X), \](See \cite[p.11]{Albin_hodge_theory_cheeger_spaces}).

Recall that for an operator $P \in \text{Diff}^1(U;E,F)$ on a manifold with boundary $(U, \partial U)$ we have the Green-Stokes formula (see, e.g., Proposition 9.1 of \cite{taylor1996partial})
\begin{equation}
\label{equation_Greens_identity}
    \langle Ps, \tau \rangle_F - \langle s, P^* \tau \rangle_E = \int_{\partial U} g_F( i\sigma_1(P)(dr) s, \tau) \operatorname{dVol}_{\partial U}
\end{equation}
where $\sigma_1(P)(dr)$ is the principal symbol of the operator $P$ evaluated at the differential of a boundary defining function $r$, where we have chosen $r$ such that the outward pointing unit normal vector to the boundary is given by $\partial_r$, and the corresponding covector is $dr$.
We can use this to understand the adjoint domains through the boundary pairing at the resolved boundary $\partial X$, which fibers over the stratum $Y$ with links $(\partial X/ Y)$.
Integration by parts shows us
\begin{align}
    G_{d}: \mathcal{D}_{W}(d) \times \mathcal{D}_{W'}(\delta) &\longrightarrow \mathbb{R}\\
    (u,v) &\longmapsto \langle d s, \tau \rangle_{L^2} - \langle s, \delta \tau \rangle_{L^2}=\int_{\partial X} (\iota_{\partial_x}(dx) s, \tau) \operatorname{dVol}_{\partial X},
\end{align}
and the adjoint domain $\mathcal{D}_{W^{\perp}}(\delta)$ 
is the largest domain for which the boundary pairing vanishes for all elements in $\mathcal{D}_{W}(d)$. 
Thus, we study the boundary pairing for the operator $D=d+\delta$ given by
\begin{align}
    G_{D}: \mathcal{D}_{\max}(D) \times \mathcal{D}_{\max}(D) &\longrightarrow \mathbb{R}\\
    (u,v) &\longmapsto \langle D u, v \rangle_{L^2} - \langle u, D v \rangle_{L^2}.
\end{align}
This can be written using the Green-Stokes theorem as 
\begin{equation}
    G_{D}(u,v) = \langle \alpha(u), \beta(v) \rangle_{\partial X} - \langle \beta(u), \alpha(v) \rangle_{\partial X},
\end{equation}
where $\alpha,\beta$ belong to the middle degree cohomology $\mathcal{H}^{l/2}(\partial X)$ and are given by the Cauchy data $u_{l/2} = \alpha(u) + dx \wedge \beta(u)$. If we set 
\begin{equation}
    \mathcal{D}_W(d) = \{u \in \mathcal{D}_{\max}(d): \alpha(u) \in W\},
\end{equation}
then the adjoint domain is given by picking the \textbf{complement of $W$ with respect to the boundary pairing} and define 
\begin{equation}
    \mathcal{D}_{W^{\perp}}(\delta) = \{u \in \mathcal{D}_{\max}(\delta): \beta(u) \in W^{\perp} \}.
\end{equation}
Indeed if $\alpha(u) \in W$ and $\beta(u) \in W^{\perp}$, it is easy to check that $\langle \alpha(u), \beta(v) \rangle_{\partial X}=0$.
Then the domain for the Dirac operator is given by 
\begin{equation}
    \mathcal{D}_W(D=d+\delta) = \{u \in \mathcal{D}_{\max}(D): \alpha(u) \in W, \beta(u) \in W^{\perp} \}
\end{equation}
and it is shown in \cite{Albin_hodge_theory_cheeger_spaces,Albin_hodge_theory_on_stratified_spaces} that this corresponds to the domain for the Dirac operator defined in Subsection \ref{subsection_abstract_hilbert_complexes}.

In the depth one case, a \textbf{\textit{mezzo-perversity}} is a choice of sub-bundle $W$
\[
\begin{tikzcd}
    W \arrow[rr] \arrow[rd] & &\mathcal{H}^{l/2}(\partial X/Y)\arrow[dl] \\
    & Y &
\end{tikzcd}
\]
where $(\partial X/ Y)$ is the boundary fibration and $\mathcal{H}^{l/2}(\partial X/Y)$ is the fibration over $Y$ with typical fiber $\mathcal{H}^{l/2}(Z)$, where $W$ is parallel with respect to a particular flat connection (which is independent of the metric). We refer to \cite[\S 3.1]{albin2017novikov} for details of this Gauss-Manin type flat connection on the vertical cohomology. 

This determines the choice of adjoint domain $\mathcal{D}_{W^{\perp}}(\delta=P_X^*)$, realized as 
\begin{equation}
    \mathcal{D}_{W^{\perp}}(P^*_X) = \{ u \in \mathcal{D}_{\max}(P^*_X): (\{0\},W) \circ \mathcal{C}_Y(u) = 0\},
\end{equation}

\begin{remark}
In \cite{Albin_hodge_theory_cheeger_spaces,Albin_hodge_theory_on_stratified_spaces} the domains for the Signature complex are defined in terms of Cheeger ideal boundary conditions as follows.
Let $\pi_W, \pi_{{W}^\perp} \in C^\infty \big(Y^1; \operatorname{End}(\mathcal{H}^{l_1/2}(\partial X/Y^1)) \big)$ be the orthogonal projection onto $W$ and $W^\perp$ respectively. Then a \textit{\textbf{Cheeger ideal boundary condition}} $W$ associated to the Dirac operator $D$ is given by
\begin{equation}
    V = (\pi_W, \pi_{W^\perp} ) \in C^\infty \big(Y;\operatorname{End}(\mathcal{H}^{l/2}(\partial X/Y) \oplus \mathcal{H}^{l/2}(\partial X/Y))\big),
\end{equation}
which corresponds to the domain
\begin{equation}
\label{equation_different_domain_Dirac_notation}
    \mathcal{D}_{V}(D_X) = \{ u \in \mathcal{D}_{\max}(D_X): V^{\perp} \circ \mathcal{C}_Y(u) = 0\}
\end{equation}
\end{remark}

\begin{remark}[Convention]
\textbf{Unless specified otherwise}, we use the convention that we denote the domain for the Dirac operator corresponding to the mezzo-perversity $W$ as \[\mathcal{D}_W(D):=\mathcal{D}_W(d) \cap \mathcal{D}_{W^{\perp}}(d^*),\] and similarly for $\Delta=D^2$ (this is as opposed to notations such as $\mathcal{D}_{W^{\perp}}(D)$ prioritizing the choice of domain for $\delta$).  
\end{remark}

We discuss an example of a suspension of a torus (see Figure \ref{fig_1_sus_torus}) here, which we continue to build on in Examples \ref{Example_suspension_torus_2} and \ref{Example_suspension_torus_Witten_deformation} with different choices of mezzo-perversities.

\begin{figure}[h]
    \centering
    \includegraphics[scale=.4]{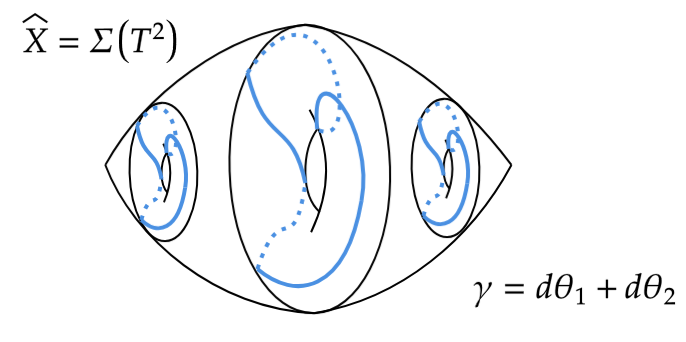}
    \caption{A stratified pseudomanifold arising as the suspension of a two torus. The blue curve depicts a homology class dual to the 1-form $\gamma=d\theta_1+d\theta_2$}
    \label{fig_1_sus_torus}
\end{figure}

\begin{example}
\label{Example_suspension_over_torus_first}
Consider the suspension over the two torus $\widehat{X}=\Sigma(\mathbb{T}^2)$, the resolution of which is $X=[0,\pi]_\phi \times \mathbb{T}^2_{\theta_1,\theta_2}$ with the wedge metric $d\phi^2+\sin^2(\phi) (d\theta_1^2 + d\theta_2^2)$, on which we consider the stratified Morse function $h=\cos(\phi)$. 

We define the form $\gamma:=d\theta_1+d\theta_2$ and observe that $\star_{\mathbb{T}^2} \gamma= -d\theta_1+d\theta_2=:\gamma'$. We pick the ideal boundary condition $W$ on $X$ corresponding to the sub-bundle consisting of the restriction of $\gamma$ to each of the two singular points. 
The de Rham operator is
\begin{equation}
    d=d\phi \wedge \nabla_{\partial_\phi} + \sin(\phi) d\theta_1 \wedge \nabla_{\frac{1}{\sin(\phi)}\partial_{\partial_{\theta_1}}} +\sin(\phi) d\theta_2 \wedge \nabla_{\frac{1}{\sin(\phi)}\partial_{\partial_{\theta_2}}}.
\end{equation}
It is easy to check that the sections $d\theta_1, d\theta_2$ as well as $d\phi \wedge d\theta_1, d\phi \wedge d\theta_2$ are in the null space of $d$.
The de Rham operator has domain 
\begin{equation}
    \mathcal{D}_W(d)=\{ \omega \in \mathcal{D}_{\max}(d) : \alpha (\omega_{\delta}) \in H^{-1/2}(Y ; W) \},
\end{equation}
where $Y=\{ \phi=0\} \cup \{ \phi=\pi\}$.
The domain of $\delta$ is
\begin{equation}
    \mathcal{D}_{W^{\perp}}(\delta)=\{ \omega \in \mathcal{D}_{\max}(\delta) : \beta (\omega_{d}) \in H^{-1/2}(Y ; W^{\perp}), \}
\end{equation}
and one can check that 
\begin{equation}
    \mathcal{D}_{W^{\perp}}(\delta_{\varepsilon})=\{ \omega \in \mathcal{D}_{\max}(\delta_\varepsilon) : e^{+\varepsilon \cos(\phi)}\beta (\omega_{d}) \in H^{-1/2}(Y ; W^{\perp}) \}.
\end{equation}
The cohomology of both of the complexes $\mathcal{D}_{W}(d)$ and $\mathcal{D}_{W^{\perp}}(\delta)$ are spanned by $1$, $\gamma$, the sections $d\phi \wedge \gamma'$ and the volume form, all of which are in the domain $\mathcal{D}_{W}(d) \cap \mathcal{D}_{W^{\perp}}(\delta)$.
\end{example}

\begin{remark}   
It is easy to check that $d\phi \wedge d\theta_1, d\phi \wedge d\theta_2$ are in the null space of $d$ and these elements are in the domain $\mathcal{D}_{\max}(d)$.
The sections $d\theta_1$ and $d\theta_2$ are co-exact (their duals $d(\phi d\theta_1), d(\phi d\theta_2)$ under the Hodge star operator are exact). These elements are in the domain $\mathcal{D}_{\max}(\delta)$.
\end{remark}

\subsubsection{Case of depth 2}
\label{subsection_depth_2}
We describe the case of depth 2 strata.
Consider a depth two stratification:
\begin{equation}
    Y^2 \subset Y^1 \subset \widehat{X},
\end{equation}
and let $Z^i - H^i \overset{\phi_i}{\longrightarrow} Y^i$ denote the boundary fibration where we label the boundary hypersurfaces according to the strata.

At depth one, we pick the mezzo-perversity $W \subseteq \mathcal{H}^{\dim Z^1/2}(H^1/Y^1)$ and define 
\begin{multline}
    \mathcal{D}_{\max,W^1}(P_X) = \{ u \in L^2(X;E) : P_Xu \in L^2(X;F),\\
    \; \chi u \in \mathcal{D}_{W^1}(D_{\widehat{X} \setminus Y^2}), \; \forall \chi \in C^\infty_c(\widehat{X} \setminus Y^2) \}
\end{multline}
where the domain $\mathcal{D}_{W^1}(P_{\widehat{X} \setminus Y^2})$ is well defined for all sections supported away from $Y^2$.
This determines a domain for the Dirac operator $\mathcal{D}_{W^1}(D_{\widehat{X} \setminus Y^2})$ as described in Subsection \ref{subsection_abstract_hilbert_complexes}, each element $u$ of which has 
a partial asymptotic expansion
\begin{equation}
    u \sim \alpha(u) +d\rho_{H^2} \wedge \beta(u) + \tilde{u}, \quad \tilde{u} \in \rho_{H^2}^{1-}H^{-1}_e(X;E),
\end{equation}
where $\rho_{H^2}$ is a boundary defining function for $H^2$.
There is a Cauchy data map 
\begin{equation}
    \mathcal{C}_{Y^2}(D)(u) = (\alpha(u),\beta(u)), u \in \mathcal{D}_{\max,W^1}(D_X) 
\end{equation}
similar to the case of depth 1, where $\alpha(u), \beta(u)$ are the orthogonal projections of the forms $u$ and $\iota_{({d\rho_{H^2}}^\#)}u$ respectively onto the space
\begin{equation}
    H^{-1/2}(Y^2; \Lambda^\bullet(T^*Y^2) \otimes \mathcal{H}^{l_2/2} (H^2/Y^2)),
\end{equation}
where $l_2$ is the dimension of the link $Z^2$ at $Y^2$, and where $\rho_{H^2}^{1-}H^{-1}_e(X;E) =  \bigcap_{\epsilon > 0} \rho_{H^2}^{1-\epsilon}H^{-1}_e(X;E)$ consists of elements in the minimal domain of $D$.
We denote by $W^2$ a flat subbundle of $\mathcal{H}^{l_2/2} (H^2/Y^2)$ which fibers over $Y^2$, satisfying Cheeger ideal boundary condition at $Y^2$. Then we define the domain
\begin{equation}
    \mathcal{D}_{(W^1, W^2)}(P_X) = \{u \in \mathcal{D}_{\max, W^1}(P_X): ({W^2}^{\perp},\{0\}) \circ \mathcal{C}_{Y^2} (u) = 0\},
\end{equation}
where $W^2,{W^2}^{\perp} \subseteq \mathcal{H}^{l_2/2} (H^2/Y^2)$, which determines domains for $P_X^*$ and the de Rham operator by the considerations in Subsection \ref{subsection_abstract_hilbert_complexes}.

\subsubsection{Arbitrary finite depth case}
For a stratified space 
\begin{equation}
    Y^k \subset Y^{k-1} \subset \cdots \subset Y^1 \subset \widehat{X}
\end{equation}
of depth $k$, we denote $Z^i - H^i \overset{\phi_i}{\longrightarrow} Y^i$ as the boundary fibration. A \textbf{\textit{mezzo-perversity}} is a collection of bundles 
\begin{equation}
    W = \{W^1 \to Y^1, \cdots, W^{k} \to Y^{k}\},
\end{equation}
where each $W^i$ is a subbundle of $\mathcal{H}^{\dim Z^i/2}(H^i \setminus Y^i)$ and has a corresponding Cheeger ideal boundary condition. The domains $\mathcal{D}_W(P_X)$ are defined by induction and we refer to \cite{Albin_hodge_theory_cheeger_spaces,albin2017novikov} for the details.

Given a global domain for $P_X$ by $\mathcal{D}_{W}(P_X)$, we denote the corresponding complex by 
\[\mathcal{P}_W(X):=\big(L^2\Omega^*(X;E),\mathcal{D}_{W}(P_X), P_X\big).\] 
In the case where $P_X = d^E$ is a twisted de Rham operator, the Cheeger ideal boundary condition defines a self-adjoint Fredholm complex $\mathcal{P}_W(X)$.  We call 
\begin{equation}
    \mathcal{Q}_{W^{\perp}}(X):=\big(\mathcal{P}_W(X))^* = (L^2\Omega^*(X;F),(\mathcal{D}_{W}(P_X))^*, P_X^*\big)
\end{equation}
the \textbf{\textit{adjoint complex}} of $\mathcal{P}_W(X)$.
A procedure similar to the case of the stratum of depth one when extended by induction as above shows that
\begin{equation}
    \big(\mathcal{D}_{W}(P_X)\big)^*=\mathcal{D}_{W^{\perp}}(P_X^*).
\end{equation}

\begin{remark}
\label{Witt_assumption}
It is easy to see that no choices are involved when the cohomology of the links vanishes in the middle degree over each stratum, which corresponds to the \textit{\textbf{Witt condition}}, in which case the Dirac operator is essentially self-adjoint.

This is of course after a suitable rescaling of the metric in order to ensure that there are no \textit{small eigenvalues} other than the zero eigenvalues for certain \textit{vertical Dirac operators} at the strata. This can always be achieved by rescaling the links (making them smaller) close to the strata so that the non-zero eigenvalues of Dirac-type operators on the links are large, and we refer the reader to a discussion in \cite[\S 2]{albin2017novikov}, and to  \cite[\S 2]{Albin_2017_index} for an extension to more general Dirac-type operators.
\end{remark}

\subsection{Domains and boundary conditions for complexes on subspaces.}
\label{subsubsubsection_Neumann_boundary_condition}

Several important local domains and local complexes for the de Rham and Dolbeault complexes at isolated critical points (and isolated fixed points of self-maps) were studied in \cite[\S 5.2.3]{jayasinghe2023l2}, \cite[\S 3.2.1]{jayasinghe2024holomorphic}.
Here we extend this to the case of global domains studied above for the de Rham complex, corresponding to fundamental neighbourhoods of critical points.

\subsubsection{Case of isolated critical points:}
We first study the case of fundamental neighbourhoods of isolated critical points, which also give a model for the fibers $U_a$ of the fundamental neighbourhoods $U(F_a)$ of non-isolated critical points that we study in this article. We note that these are Type II critical points iff they are isolated singularities. If not, and if they are also not smooth points, they are of Type I.
\begin{remark} 
    We follow the convention in \cite[\S 5.2.3]{jayasinghe2023l2} that $\mathcal{P}$ and $\mathcal{Q}$ represent complexes associated with the operators $d$ and $\delta$, respectively. The subscripts $N$ and $D$ indicate the maximal and minimal domains, respectively.
\end{remark}

Given a global twisted de Rham complex $\mathcal{P}_W(X)=\big(L^2\Omega^k(X;E),\mathcal{D}_W(P), P\big)$ where $X$ is a smooth manifold, when we restrict it to a smooth neighbourhood $U$ with smooth \textit{metric boundary} $\partial U$ (see Definition \ref{definition_smooth_boundary_and_singularities}), there are two canonical ways of defining a Hilbert complex in a neighbourhood that are of interest in Morse theory. We will denote the operator restricted to $U$ by $P_U$, denoting it by $P$ when it is understood by context that we are studying the operator of the complex on $U$.

We define the complex $\mathcal{P}_{N}(U)$ to be the Hilbert complex with the maximal domain 
\begin{equation}
    \mathcal{D}_{N}(P_U):=\mathcal{D}_{\max}(P_U)=\{ u \in L^2(U;E) : P_U u \in L^2(U;E) \},
\end{equation}
where $P_U u$ is defined in the distributional sense.
This fixes the domain for operators in the \textit{\textbf{adjoint Hilbert complex of $\mathcal{P}_N(U)$}}, and we will \textit{\textbf{denote it as $(\mathcal{P}_N(U))^*=\mathcal{Q}_N(U)$}},
where $\mathcal{Q}(X)=(L^2\Omega^{n-k}(X;E),\mathcal{D}(\delta),P^*=\delta)$. Here the boundary conditions for $\mathcal{Q}_N(U)$ match the boundary conditions for $\mathcal{P}_D(U)$ where the choice of domain indicated by the subscript $D$ corresponds to the minimal domain when restricted to $U$ which is smooth.

This induces a domain for the Dirac-type operator $D=P+P^*$.
We refer the reader to \cite[\S 5.2.3]{jayasinghe2023l2} for a more detailed exposition of the choices of domains in the smooth setting in the case of isolated critical points, and the boundary conditions at $\partial U$ satisfied by the sections of these domains.

Given a boundary defining function $\rho$ for $\partial U (=\{ \rho=0\})$ for an attracting critical point, the boundary conditions acting on smooth sections of the de Rham complex reduces to 
\begin{equation}
    \sigma(d^*)(d\rho)u|_{\rho=0} =0, \quad \sigma(d^*)(d\rho)du|_{\rho=0} =0,
\end{equation}
and for an expanding critical point we get
\begin{equation}
    \sigma(d)(d\rho)u|_{\rho=0} =0, \quad \sigma(d)(d\rho)d^*u|_{\rho=0} =0.
\end{equation}

We now consider the case when $\widehat{U} \subset \widehat{X}$ is a singular neighborhood of an isolated critical point. We first observe that we can apply the boundary conditions $N,D$ above at the metric boundary to extend the definitions of $\mathcal{D}_{N}(P_U),\mathcal{D}_{D}(P_U)$ for both $P=d_E$ and $P=\delta_E$, corresponding to the maximal domains on the local neighbourhoods. However, we need to pick local domains that respect the global choices of domains.

Given a complex $\mathcal{P}_{W}(X)$, we define the local complex 
\begin{equation}
    \mathcal{P}_{W,N}\big(U(F_a)\big)=\big(L^2\Omega(U(F_a);E),\mathcal{D}_{W,N}(P_{U(F_a)}), P_{U(F_a)}\big)
\end{equation}
with the domain
\begin{equation}
\label{equation_local_complex_first_factor}
    \mathcal{D}_{W, N}\big(P_{U(F_a)}\big) := \text{graph closure of } \big\{ \mathcal{D}_{N}(P_{U(F_a)}) \cap_{k} ({W^k}^{\perp},\{0\}) \circ \mathcal{C}_{Y^k} = 0 \big\},
\end{equation}
for $P_X=d_X$, 
and 
\begin{equation}
    \mathcal{D}_{W^{\perp}, N}\big(P^*_{U(F_a)}\big) := \text{graph closure of } \big\{ \mathcal{D}_{N}(P^*_{U(F_a)}) \cap_{k} (\{0\},{W^k}) \circ \mathcal{C}_{Y^k} = 0 \big\},
\end{equation}
for the adjoint $P^*=\delta_E$, where $W^k \subseteq \mathcal{H}^{l_2/2} (H^k/Y^k)$ are the linear operators corresponding to the ideal boundary condition at each stratum, as defined in the previous section on global domains. The Cauchy data $\mathcal{C}_Y$ vanishes only for the restriction of the sections to the neighbourhood $U(F_a)$.

If we have an isolated critical point $U(F_a)=U_{a,s} \times U_{a,u}$, then we define
\begin{equation}
    \mathcal{P}_{W,B}(U(F_a)):=\mathcal{P}_{W,N}(U_{a,s}) \times \mathcal{Q}^*_{W^{\perp},N}(U_{a,u}),
\end{equation}
where $\mathcal{Q}^*_{W^{\perp},N}(U_{a,u})=\mathcal{P}_{W,D}(U_{a,u})$ is the local adjoint complex.
In this case, the local cohomology in degree $k$ at the critical point is simply given by
\begin{equation}
    \mathcal{H}^{k}(\mathcal{P}_{W,B}(U(F_a))) =\sum_{k=k_1+k_2} \mathcal{H}^{k_1}(\mathcal{P}_{W,N}(U_{a,s})) \otimes \mathcal{H}^{k_2}(\mathcal{Q}^*_{W^{\perp},N}(U_{a,u})).
\end{equation}

\begin{remark}[Domains defined with refined stratification]
Technically we use the refined stratification that corresponds to the resolved manifold with boundary with iterated fibration structure introduced in Subsection \ref{subsection_stratified_Morse_Bott_functions}, where the boundary conditions and ideal boundary conditions described above can be understood using the Green-Stokes theorem.
\end{remark}

\subsubsection{Case of general critical points:}

We now consider the case of a fundamental neighbourhood of a general critical point $U(F_a)$. Recall that $U(F_a)$ is the metric closure of the fibered neighbourhood $U(F_a^{reg})$ over $F_a^{reg}$, which is dense in $U(F_a)$. We also have that a fiber $U_a$ over a point in $F^{reg}_a$ is a product of the stable and unstable fibers $U_{a,s} \times U_{a,r}$. The metric boundary $\partial U(F_a^{reg})$ of $U(F_a^{reg})$ is dense in $\partial U(F_a)$ of $U(F_a)$.
We can define the boundary conditions corresponding to the generalized Neumann conditions for $P_E$ and $P^*_E$ on the stable and unstable fibers of each fiber over $F^{reg}$. It is easy to see by our assumptions on the metric on $U(F_a)$ that this corresponds to boundary conditions on the metric boundary $\partial U(F_a^{reg})$. By the density of $\partial U(F_a^{reg})$ in $\partial U(F_a)$, these boundary conditions completely describe a domain $\mathcal{D}_{B}(P_U(F_a))$ of the complex on $U(F_a)$. If the fibers are completely stable (gradient flow of the stratified Morse-Bott function is attracting on every fiber), then this domain is $\mathcal{D}_{N}(P_U(F_a))$, the maximal domain on $U(F_a)$.
We define the domain,
\begin{equation}
    \mathcal{D}_{W,B}(P_{U(F_a)}):=\text{graph closure of } \{ \mathcal{D}_{B}(P_{U(F_a)}) \cap_{k} ({W^k}^{\perp},\{0\}) \circ \mathcal{C}_{Y^k} = 0 \},
\end{equation}
for $P_X=d_X$.

\subsection{Local cohomology groups and complexes for general fundamental neighborhoods}
\label{subsection_local_cohomology_complexes}

The local cohomology of this domain is
\begin{equation}
    \mathcal{H}^k(\mathcal{P}_{W,B}(U(F_a))
\end{equation}
which, in the case where the fibration over $F^{reg}_a$ is trivial can be expanded as
\begin{equation}    \mathcal{H}^k(\mathcal{P}_{W,B}(U(F_a))=\sum_{k_0+k_1+k_2=k} 
    \mathcal{H}^{k_0}(\mathcal{P}_{W}(F_a)) \otimes\mathcal{H}^{k_1}(\mathcal{P}_{W,N}(U_{a,s})) \otimes \mathcal{H}^{k_2}(\mathcal{Q}^*_{W^{\perp},N}(U_{a,u})),
\end{equation}
where $U_{a,s} \times U_{a,u}$ are the fibers over $F_a$ in $U(F_a)$ and where $\mathcal{P}_{W}(F_a)$ is the de Rham complex on the stratified space $F_a$ with what we shall call the \textit{\textbf{induced mezzo-perversity}} $W(F_a)$ (we shall introduce this momentarily) which we denote by $W$ with abuse of notation.
Equivalently, the cohomology of the complex can be expressed as
\begin{equation}
    \mathcal{H}^k(\mathcal{P}_{W,B}(U(F_a))=\sum_{k_0+k_3=k} 
    \mathcal{H}^{k_0}(\mathcal{P}_{W}(F_a)) \otimes\mathcal{H}^{k_3}(\mathcal{P}_{W,B}(U_{a}))=\mathcal{H}^k(\mathcal{R}_{W,B}(U(F_a))),
\end{equation}
where 
\begin{equation}
    \mathcal{R}_{W,B}(U(F)):=(L^2\Omega^{\cdot}(F;\mathcal{H}^{\cdot}(\mathcal{P}_{W,B}(U_{a})) \otimes E), \mathcal{D}_{W(F)}(d^H_E), d^H_E),
\end{equation}
for any $F=F_a$, where the operator $d^H_E$ is the \textbf{horizontal de Rham operator} at $F$. Given a local frame of vector fields $\{e^H_i\}$ on $F_a^{reg}$ 
we can construct the horizontal de Rham operator
\begin{equation}
    d_E^H:=\sum {(e^H_i)}^{\flat} \wedge \nabla_{e^H_i}
\end{equation}
acting on forms with coefficients in the bundle $\mathcal{H}^{\cdot}(\mathcal{P}_{W,B}(U_{a})) \otimes E$ over $F_a$. We can define the \textbf{vertical de Rham operator} by $d^V_E:=d-d^H_E$.
Since the sections in $L^2\Omega^{\cdot}(F;\mathcal{H}^{\cdot}(\mathcal{P}_{W,B}(U_{a})) \otimes E)$ are a subset of $L^2\Omega^{\cdot}(U(F);E)$, given a mezzo-perversity $W$ on $X$, it determines an ideal boundary condition for the sections in $L^2\Omega^{\cdot}(F;\mathcal{H}^{\cdot}(\mathcal{P}_{W,B}(U_{a})) \otimes E)$ which we call the \textit{\textbf{induced mezzo-perversity}}, crucially when extending the sections from $F^{reg}$ to $F$.

\textbf{The last description extends to the general case where the fibration of $U(F_a)$ over $F_a$ is not trivial}. This is due to our assumption that the metric is of model type near critical point sets, and hence the de Rham and Laplace type operators split into horizontal and vertical versions at the critical points (in particular the cohomology groups on the vertical fibers are well defined).

The spaces $\mathcal{R}_{W,B}(U(F_a))$ are called the \textbf{\textit{normal cohomology complexes}} near critical point sets in the general stratified Morse-Bott setting of this article. This is an analog of twisting by local coefficients given by the cohomology of the fibers that are used in topological versions of Morse inequalities and equivariant index theorems.

We also define the \textit{\textbf{horizontal/vertical}} Laplacians $\Delta_E^H/\Delta_E^V$ by 
\begin{equation}
    \label{horizontal_vertical_Laplace_operators}
    \Delta_E^H:=d_E^H (d_E^H)^*+(d_E^H)^* d_E^H, \quad \Delta_E^V:=d_E^V (d_E^V)^*+(d_E^V)^* d_E^V
\end{equation}
where $(d_E^H)^*$ and $(d_E^V)^*$ are the adjoints of $d_E^H$ and $d_E^V$ respectively on the fundamental neighbourhoods. The flatness condition we use is equivalent to the fact that the operators $\Delta_E^H$ and $\Delta_E^V$ commute, where the corresponding curvature term is computed explicitly in the smooth case in \cite{bismut1986witten}.

\begin{remark}
The bundle $W$ is flat with respect to the Gauss-Manin connection.
We see that the bundle of cohomology is a flat bundle since the cohomology is in the null space of the vertical de Rham operator. Over $F^{reg}$, the vertical de Rham operator corresponding to the Gauss-Manin connection is independent of the choice of wedge metric (see Lemma 3.1 \cite{albin2017novikov}).
\end{remark}

\subsection{Poincar\'e dual complexes and self-dual mezzo-perversities}
\label{Subsection_Poincare_Dual_complexes}

Given a twisted de Rham complex 
\begin{equation}
    \mathcal{P}_W(X)=(L^2\Omega^k(X;E),\mathcal{D}_W(P_X),P_X),
\end{equation} 
we define \textit{\textbf{the Poincar\'e dual complex}} to be 
\begin{equation}
 (\mathcal{P}_W(X))_{PD}:=\mathcal{P}_{\star W^{\perp}}(X)=
(L^2\Omega^{\cdot}(X;E),\mathcal{D}_{\star W^{\perp}}(P_X),P_X),
\end{equation}
which is the adjoint of the complex 
\begin{equation}
\mathcal{Q}_{\star W(X)}=(L^2\Omega^{n-\bullet}(X;E),\mathcal{D}_{\star W}(P^*_X),P^*_X),
\end{equation}
which we call the \textit{\textbf{the adjoint Poincar\'e dual complex}} where
\begin{equation}
    \mathcal{D}_{\star W^{\perp}}(P):=\{ s \in L^2\Omega^k(X;E) \mid \star s \in \mathcal{D}_{W}(P^*) \subseteq L^2\Omega^{n-k}(X;E) \}.
\end{equation}

Given a normal cohomology complex 
\begin{equation}
    \mathcal{R}_{W,B}(U(F_a))=(L^2\Omega^{\bullet}(F;\mathcal{H}^{\bullet}(\mathcal{P}_{W,B}(U_{a})) \otimes E), \mathcal{D}_{W(F)}(d^H_E), d^H_E),
\end{equation}
the adjoint normal cohomology complex is 
\begin{equation}
    \mathcal{R}_{W^{\perp}, B^{\perp}}(U(F_a))=(L^2\Omega^{\bullet}(F;\mathcal{H}^{\bullet}(\mathcal{P}_{W^{\perp},B^{\perp}}(U_{a})) \otimes E), \mathcal{D}_{W(F)^{\perp}}(d^H_E), d^H_E),
\end{equation}
where $\star N=D$ and $\star D= N$, which determines $\star B$ which we also denote as $B^{\perp}$. The local complexes obtained by restricting to a fundamental neighbourhood of a connected component of the critical point set have an analogous duality given by the Hodge star operator which plays the role of a \textbf{Dirichlet to Neumann operator}, intertwining the boundary conditions.

If there is a perversity $W$ such that $W=\star W^{\perp}$, then we say that $W$ is a \textbf{self-dual mezzo-perversity}, and we can define a Signature operator following \cite{Albin_hodge_theory_cheeger_spaces,Albin_hodge_theory_on_stratified_spaces,albin2017novikov}.
In this case since the $\star$ operator preserves the forms in the domain of $d+\delta$ corresponding to the de Rham complex, the middle dimensional forms decompose into a direct sum of vector spaces, where the forms in each are either self-dual ($\omega=\star \omega$) or anti-self-dual ($\omega=-\star \omega$). This descends to a $\mathbb{Z}_2$-grading of the middle dimensional harmonic forms and the signature invariant is then the difference of the dimensions of these spaces, and corresponds to the Fredholm index of the signature operator.

Spaces which admit self-dual mezzo-perversities are called $L$-spaces or Cheeger spaces, and the signature is independent of the choice of self-dual mezzo-perversity.

\begin{remark}
The fact that the signature is independent of the choice of mezzo-perversity was noticed used in \cite{Albin_hodge_theory_cheeger_spaces,RefinedintersectionAlbinBanagl2015} to show that the Signature is a topological invariant. The key idea can be easily seen locally in the case of an isolated singularity, where given a self-dual form which is not in the minimal domain of the operator $d+\delta$, it can be written as $\alpha \pm dx \wedge \star \alpha$ on a neighbourhood of the conic point, where $x$ is the radial distance from the conic point. Then $\alpha \mp dx \wedge \star \alpha$ is an anti-self-dual form in a local neighbourhood. 
\end{remark}

Given a four dimensional smooth manifold equipped with a flat vector bundle $E$, the twisted self-dual and anti-self dual complexes are
\begin{equation}
    0 \rightarrow \Omega^0(X;E) \xrightarrow{d_E} \Omega^1(X;E) \xrightarrow{d^{\pm}_E} \Omega^{2\pm}(X;E) \rightarrow 0, 
\end{equation}
where $d^{\pm}_E:=\Pi^{\pm} \circ d_E$ where $\Pi^{\pm}$ are the projections onto the self-dual ($+$) and anti-self-dual $(-)$ complexes. It is easy to see that these definitions extend to stratified pseudomanifolds given a choice of self-dual mezzo-perversity.
However, the indices of the self-dual and anti-self-dual complexes depend on the choice of self-dual mezzo-perversity on such spaces.

\begin{remark}
    We observe that $\star (W^{\perp})= (\star W)^{\perp}$, and that the Hodge star operator intertwines the operators $\mathcal{D}_W(d_E)$, $\mathcal{D}_{\star W^{\perp}}(\delta_E)$, corresponding to the complexes $\mathcal{P}_W(X)$ and $\mathcal{Q}_{\star W}(X)$.
\end{remark}
In Section \ref{section_Witten_deformation}, we will see that Poincar\'e duality extends to deformed complexes.

\subsection{Polynomial supertraces}
\label{subsection_polynomial_Lefschetz_supertraces}

The categorification of the Morse inequalities given by the Witten instanton complexes can be used to easily derive the Morse inequalities by using trace formulae on them. Here we collect some trace formulas and dualities that we use to prove such results.
We refer the reader to \cite[\S 4]{jayasinghe2023l2} and \cite[\S 3.3]{jayasinghe2024holomorphic} for  generalizations of these.

\begin{definition}
\label{L2_Lefschetz}
Let $X$ be a pseudomanifold with a twisted de Rham complex $\mathcal{P}=(H,\mathcal{D}(P), P)$, then we define the associated \textit{\textbf{Poincar\'e polynomial}} to be
\begin{equation} 
L(\mathcal{P})(b):= \sum_{k=0}^n b^k \dim {\mathcal{H}^k(\mathcal{P})} \in \mathbb{Z}_0^+[b],
\end{equation}
and the associated \textit{\textbf{Euler characteristic}} to be
\begin{equation} 
L(\mathcal{P}) = L(\mathcal{P})(-1).
\end{equation}
\end{definition}

\begin{definition}
\label{definition_polynomial_Lefschetz_supertrace}
Let $\mathcal{P}=(H,\mathcal{D}(P),P)$ be a finite dimensional Hilbert complex.
For all $t \in \mathbb{R}^+$, we define the \textbf{\textit{polynomial Lefschetz heat supertrace}} as
\begin{equation}
    \mathcal{L}(\mathcal{P})(b,t)=\mathcal{L}(\mathcal{P}(X))(b,t):=\sum_{k=0}^n  Tr(b^k e^{-t \Delta_k}),
\end{equation}
and we call $\mathcal{L}(\mathcal{P})(-1,t)$ the \textit{\textbf{Lefschetz heat supertrace}} associated to the complex. Here, we use the notation $\mathcal{L}(\mathcal{P}(X))(b,t)$ when the complex $\mathcal{P}=\mathcal{P}(X)$ is associated to a pseudomanifold $X$.
\end{definition}
The following is a simplified version of Theorem 3.14 of \cite{jayasinghe2023l2}.

\begin{theorem}
\label{Lefschetz_supertrace}
Let $\mathcal{P}=(H,\mathcal{D}(P), P)$ be a finite dimensional Hilbert complex. 
For all $t \in \mathbb{R}^+$,
\begin{equation} 
\label{equation_with_the_b}
    \mathcal{L}(\mathcal{P})(b,t)=L(\mathcal{P})(b)+ (1+b) \sum_{k=0}^{n-1} b^k S_k(t),
\end{equation}
where
\begin{equation} 
\label{equation_error_in_Lefschetz_supertrace}
    S_k(t)=\sum_{\lambda_i \in Spec(\Delta_k)} e^{-t \lambda_i}  \langle v_{\lambda_{i}}, v_{\lambda_{i}} \rangle,
\end{equation}
where $\{v_{\lambda_i}\}_{i \in \mathbb{N}}$ are an orthonormal basis of co-exact eigensections of $\Delta_k$. In particular,
\begin{equation}
\label{Heat_formula_all_t_P}
    L(\mathcal{P})= \mathcal{L}(\mathcal{P})(-1,t),
\end{equation}
and the Lefschetz heat supertrace is independent of $t$.
\end{theorem}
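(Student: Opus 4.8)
The plan is to reduce the statement to elementary linear algebra on the finite-dimensional graded vector space $H=\bigoplus_k H_k$, exploiting the Hodge decomposition and the fact that the differential $P$ commutes with the Laplacian. The only input beyond bookkeeping is an eigenspace-level version of the Kodaira decomposition.

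First I would invoke Proposition \ref{Fredholm_is_closed_range} together with Proposition \ref{Kodaira_decomposition} (or simply finite-dimensionality) to obtain the strong Kodaira decomposition $H_k=\widehat{\mathcal{H}}^k(\mathcal{P})\oplus\operatorname{ran}(P_{k-1})\oplus\operatorname{ran}(P_k^*)$, together with the identification $\mathcal{H}^k(\mathcal{P})\cong\widehat{\mathcal{H}}^k(\mathcal{P})=\ker\Delta_k$. Since $P_k$ and $P_{k-1}^*$ commute with the relevant Laplacians, each eigenspace $H_{k,\lambda}:=\ker(\Delta_k-\lambda)$ inherits the same three-term splitting, and for $\lambda\neq 0$ the harmonic summand is trivial, so $H_{k,\lambda}$ is the direct sum of its exact part $(\operatorname{ran}P_{k-1})_\lambda$ and its co-exact part $(\operatorname{ran}P_k^*)_\lambda$.

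The key step is to show, for each $\lambda>0$, that $P_k$ restricts to an isomorphism $(\operatorname{ran}P_k^*)_\lambda\xrightarrow{\ \sim\ }(\operatorname{ran}P_k)_\lambda$, so that the co-exact $\lambda$-eigenspace in degree $k$ has the same dimension as the exact $\lambda$-eigenspace in degree $k+1$. Indeed $P_k$ carries co-exact $\lambda$-eigensections to exact $\lambda$-eigensections; it is injective there because a co-exact $v$ with $P_kv=0$ satisfies $\Delta_k v=P_k^*P_k v=0$, forcing $v=0$; and $\lambda^{-1}P_k^*$ is a two-sided inverse, using $\Delta_k=P_k^*P_k$ on co-exact sections and $\Delta_{k+1}=P_kP_k^*$ on exact ones. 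Setting $a_k(\lambda):=\dim(\operatorname{ran}P_k^*)_\lambda$ (the number of co-exact $\lambda$-eigensections in degree $k$, with $a_{-1}=a_n=0$), this gives $\dim H_{k,\lambda}=a_k(\lambda)+a_{k-1}(\lambda)$ for $\lambda>0$ and $\dim H_{k,0}=\dim\mathcal{H}^k(\mathcal{P})$.

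Finally I would assemble the supertrace. Pulling out the scalar $b^k$, $\operatorname{Tr}(b^ke^{-t\Delta_k})=b^k\dim\mathcal{H}^k(\mathcal{P})+b^k\sum_{\lambda>0}e^{-t\lambda}\big(a_k(\lambda)+a_{k-1}(\lambda)\big)$; summing over $k$ and re-indexing the shifted term as $\sum_k b^k a_{k-1}(\lambda)=b\sum_k b^k a_k(\lambda)$ gives
\[
\mathcal{L}(\mathcal{P})(b,t)=L(\mathcal{P})(b)+(1+b)\sum_{k=0}^{n-1}b^k\sum_{\lambda>0}e^{-t\lambda}a_k(\lambda).
\]
Defining $S_k(t):=\sum_{\lambda>0}e^{-t\lambda}a_k(\lambda)$, which is exactly $\sum_i e^{-t\lambda_i}\langle v_{\lambda_i},v_{\lambda_i}\rangle$ summed over an orthonormal basis $\{v_{\lambda_i}\}$ of co-exact eigensections of $\Delta_k$ (note all such $\lambda_i>0$, since a co-exact harmonic section would lie in $\widehat{\mathcal{H}}^k\cap\operatorname{ran}(P_k^*)=0$), yields \eqref{equation_with_the_b}. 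Setting $b=-1$ kills the $(1+b)$ factor, so $\mathcal{L}(\mathcal{P})(-1,t)=L(\mathcal{P})(-1)=L(\mathcal{P})$ for all $t$, which is \eqref{Heat_formula_all_t_P}. There is no analytic obstacle since the complex is finite-dimensional; the one point demanding care is the bijectivity of $P_k$ between the co-exact and exact $\lambda$-eigenspaces, which is precisely where the eigenspace-refined Kodaira decomposition is used.
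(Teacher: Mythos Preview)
Your proof is correct. The paper does not actually prove this theorem; it is stated as a simplified version of Theorem~3.14 of \cite{jayasinghe2023l2} and cited without argument. Your approach---refining the Kodaira decomposition to each eigenspace and using that $P_k$ restricts to a bijection between the co-exact $\lambda$-eigenspace in degree $k$ and the exact $\lambda$-eigenspace in degree $k+1$---is the standard proof of this fact and is exactly what one would expect to find in the cited reference.
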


\begin{remark}
In \cite{jayasinghe2024holomorphic} where we studied Dolbeault complexes, we showed that there are isomorphisms between the local and global adjoint complexes and the Serr\'e dual complexes when the complexes satisfy the Witt condition. Similarly for de Rham complexes, the Poincar\'e dual and adjoint complexes are isomorphic for Witt spaces but are different for non-Witt spaces in general, which is why we use the adjoint complexes in the result below. Moreover the geometric endomorphism is simply the identity morphism in this case.
\end{remark}

\begin{proposition}[Duality]
\label{proposition_Lefschetz_on_adjoint}
Let $\mathcal{P}=(H,\mathcal{D}(P), P)$ be an elliptic complex of maximal non-trivial degree $n$, and let $\mathcal{Q}$ be the adjoint complex. Then
\begin{equation}
\label{Kalman_filter}
   b^n \mathcal{L}(\mathcal{Q})(b^{-1},t)=\mathcal{L}(\mathcal{P})(b,t).
\end{equation}
In particular, we have the equality
\begin{equation}
    L(\mathcal{P})= (-1)^n L(\mathcal{Q}).
\end{equation}
\end{proposition}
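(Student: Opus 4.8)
The plan is to reduce the statement to a single operator identity relating the Laplace-type operators of $\mathcal{P}$ to those of its adjoint complex $\mathcal{Q}=\mathcal{P}^{*}$, after which the polynomial identity falls out by re-indexing the sum defining the heat supertrace. Unwinding \eqref{adjoint_complex}, the Hilbert space of $\mathcal{Q}$ in degree $k$ is $H_{n-k}$ and its differential in degree $k$ is $(P_{n-1-k})^{*}$. Writing $Q_k:=(P_{n-1-k})^{*}$ for this differential, applying the defining formula $\Delta_k=Q_k^{*}Q_k+Q_{k-1}Q_{k-1}^{*}$, and using that each $P_j$ is closed so $(P_j^{*})^{*}=P_j$, I would compute the degree-$k$ Laplace-type operator of $\mathcal{Q}$ to be
\[
\Delta_k^{\mathcal{Q}} = Q_k^{*}Q_k + Q_{k-1}Q_{k-1}^{*} = P_{n-1-k}(P_{n-1-k})^{*} + (P_{n-k})^{*}P_{n-k},
\]
which is precisely $\Delta_{n-k}^{\mathcal{P}}=P_{n-k}^{*}P_{n-k}+P_{n-k-1}P_{n-k-1}^{*}$ acting on $H_{n-k}$. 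I would then verify, by writing out \eqref{Laplacian_P_type} for both complexes and again invoking $(P_j^{*})^{*}=P_j$, that the two descriptions specify the same domain, so this is an equality of self-adjoint operators rather than merely of formal expressions. Consequently $e^{-t\Delta_k^{\mathcal{Q}}}=e^{-t\Delta_{n-k}^{\mathcal{P}}}$, and since $\mathcal{P}$, hence $\mathcal{Q}$, is elliptic these heat operators are trace class with $Tr(e^{-t\Delta_k^{\mathcal{Q}}})=Tr(e^{-t\Delta_{n-k}^{\mathcal{P}}})$.

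With this in hand the rest is bookkeeping: substituting $j=n-k$,
\[
\mathcal{L}(\mathcal{Q})(b,t) = \sum_{k=0}^{n} b^{k}\,Tr(e^{-t\Delta_k^{\mathcal{Q}}}) = \sum_{k=0}^{n} b^{k}\,Tr(e^{-t\Delta_{n-k}^{\mathcal{P}}}) = \sum_{j=0}^{n} b^{n-j}\,Tr(e^{-t\Delta_j^{\mathcal{P}}}) = b^{n}\,\mathcal{L}(\mathcal{P})(b^{-1},t),
\]
and replacing $b$ by $b^{-1}$ gives \eqref{Kalman_filter}. For the Euler characteristic statement I would set $b=-1$: by the McKean--Singer cancellation of the nonzero spectrum in the supertrace (as in Theorem \ref{Lefschetz_supertrace}), $\mathcal{L}(\mathcal{P})(-1,t)=L(\mathcal{P})$ is $t$-independent and equal to the Euler characteristic, and likewise $\mathcal{L}(\mathcal{Q})(-1,t)=L(\mathcal{Q})$, so $(-1)^{n}L(\mathcal{Q})=L(\mathcal{P})$; this also follows directly from $\mathcal{H}^{k}(\mathcal{Q})\cong\mathcal{H}^{n-k}(\mathcal{P})$ as in Proposition \ref{Kernel_equals_cohomology}.

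The only step with genuine content is the domain comparison needed to upgrade the formal equality $\Delta_k^{\mathcal{Q}}=\Delta_{n-k}^{\mathcal{P}}$ to an identity of self-adjoint operators; everything else is a change of summation variable. I expect this to go through immediately from the closedness of each $P_j$ together with \eqref{Laplacian_P_type}, but it is the point that must be written carefully, since the heat operator $e^{-t\Delta_k}$ --- and hence its trace --- depends on the precise self-adjoint extension.
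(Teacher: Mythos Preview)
Your argument is correct and is precisely the natural one: identify $\Delta_k^{\mathcal{Q}}$ with $\Delta_{n-k}^{\mathcal{P}}$ as self-adjoint operators (including the domain check via \eqref{Laplacian_P_type} and $(P_j^*)^*=P_j$), then re-index the sum. The paper does not actually give its own proof of this proposition---it is stated among the results collected in \S\ref{subsection_polynomial_Lefschetz_supertraces} with a reference to \cite[\S 4]{jayasinghe2023l2} and \cite[\S 3.3]{jayasinghe2024holomorphic} for the more general versions---so there is no alternative approach to compare against; your write-up is essentially what one would expect the omitted proof to be.
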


The following result summarizes the main dualities we use in this article, and is a generalization of Proposition 7.5 of \cite{jayasinghe2023l2} where it was proven for the VAPS domain on Witt spaces for general geometric endomorphisms.

\begin{proposition}
\label{Proposition_duality_complex_conjugation_for_local_Lefschetz_numbers}
Let $\widehat{X}$ be a stratified pseudomanifold of dimension $n$ with a wedge metric and let $E$ be a flat bundle. Let $\mathcal{P}_{W}(X)=\big(L^2\Omega^{\bullet}_{W}(X;E),\mathcal{D}_{W}(P), P=d_E \big)$ with the adjoint complex $(\mathcal{Q}_{W^{\perp}})(X)$.
Then we have that
\begin{equation}
    \mathcal{L}(\mathcal{P}_{W}(X))(b,t)=b^n \mathcal{L}\big((\mathcal{Q}_{W^{\perp}})(X)\big)(b^{-1},t). 
\end{equation}
Similarly for local complexes, we have
\begin{equation}
       \mathcal{L}\big(\mathcal{P}_{W,B}(U(F_a))\big)(b,t) = b^n \mathcal{L}\big((\mathcal{P}_{W^{\perp},B^{\perp}})(U(F_a))_{PD}\big)(b^{-1},t),
\end{equation}
and for the normal cohomology complexes
\begin{equation}
    \mathcal{L}\big(\mathcal{R}_{W,B}(U(F_a))\big)(b,t) = b^n \mathcal{L}\big((\mathcal{R}_{W^{\perp},B^{\perp}})(U(F_a))_{PD}\big)(b^{-1},t).
\end{equation}
\end{proposition}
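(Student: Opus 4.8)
The plan is to deduce all three identities from the single abstract statement in Proposition \ref{proposition_Lefschetz_on_adjoint}, namely that for any elliptic (or, more precisely, finite-dimensional or Fredholm Hilbert) complex $\mathcal{S}$ of top degree $n$ with adjoint complex $\mathcal{S}^*$, one has $b^n\mathcal{L}(\mathcal{S}^*)(b^{-1},t)=\mathcal{L}(\mathcal{S})(b,t)$. The only real content beyond that is the identification of which complex plays the role of the adjoint in each of the three cases, together with the bookkeeping of the grading convention recorded in Remark \ref{Remark_Notation_and_grading_P_vs_Q} (that the $\mathcal{Q}$-complexes carry the degree-$(n-k)$ grading as Hilbert complexes but we report polynomials in the form-degree grading). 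So the first step is to make that grading translation explicit: if $\mathcal{P}_W(X)=(L^2\Omega^\bullet(X;E),\mathcal{D}_W(d_E),d_E)$, then by the discussion in Subsection \ref{subsection_abstract_hilbert_complexes} its adjoint Hilbert complex is $\mathcal{Q}_{W^\perp}(X)=(L^2\Omega^\bullet(X;E),\mathcal{D}_{W^\perp}(\delta_E),\delta_E)$, and the Hilbert-space in degree $k$ of $\mathcal{Q}_{W^\perp}$ is $L^2\Omega^{n-k}(X;E)$. Feeding this into Proposition \ref{proposition_Lefschetz_on_adjoint} directly gives the first displayed identity $\mathcal{L}(\mathcal{P}_W(X))(b,t)=b^n\mathcal{L}(\mathcal{Q}_{W^\perp}(X))(b^{-1},t)$; this step is essentially a citation plus a note about the convention.

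For the second identity I would argue that the Poincar\'e dual complex $(\mathcal{P}_{W,B}(U(F_a)))_{PD}$ is, by its definition in Subsection \ref{Subsection_Poincare_Dual_complexes}, the adjoint of the complex $\mathcal{Q}_{\star W,\star B}(U(F_a))$, whose differential is $\delta_E$ with domain obtained by conjugating $\mathcal{D}_{W,B}(d_E)$ by the Hodge star; equivalently, the Hodge star $\star$ furnishes a degree-reversing isometry $L^2\Omega^k(U(F_a);E)\to L^2\Omega^{n-k}(U(F_a);E)$ intertwining $\mathcal{P}_{W,B}(U(F_a))$ with $(\mathcal{Q}_{\star W,\star B}(U(F_a)))$ (here using that $\star N = D$, $\star D = N$ on the metric boundary, as recorded after the definition of the adjoint normal complex, so that $\star$ carries the Neumann-type boundary condition to the Dirichlet-type one). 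Since $\star$ is an isometry commuting with the Laplacians up to this relabeling, $\mathrm{Tr}(b^ke^{-t\Delta_k})$ for $\mathcal{P}_{W,B}(U(F_a))$ equals $\mathrm{Tr}(b^{n-k}e^{-t\Delta_{n-k}})$ for the dual complex; summing over $k$ and substituting $b\mapsto b^{-1}$ yields $\mathcal{L}(\mathcal{P}_{W,B}(U(F_a)))(b,t)=b^n\mathcal{L}((\mathcal{P}_{W^\perp,B^\perp})(U(F_a))_{PD})(b^{-1},t)$, once one notes that $(\mathcal{P}_{W^\perp,B^\perp})_{PD}=\mathcal{P}_{\star(W^\perp)^\perp}=\mathcal{P}_{\star W}$ matches what the Hodge star produces from $\mathcal{P}_{W,B}$. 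The same Hodge-star argument, applied to the horizontal de Rham operator $d_E^H$ acting on $L^2\Omega^\bullet(F;\mathcal{H}^\bullet(\mathcal{P}_{W,B}(U_a))\otimes E)$ and using that $\star$ on $F$ interchanges $\mathcal{H}^{k_3}(\mathcal{P}_{W,B}(U_a))$ with $\mathcal{H}^{n-k_3}((\mathcal{P}_{W^\perp,B^\perp})(U_a)_{PD})$ fiberwise (by the local Poincar\'e duality already established for the fibers), gives the third identity for the normal cohomology complexes $\mathcal{R}_{W,B}(U(F_a))$.

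The subtle point — and the one I would spend the most care on — is that the Hodge star really does interchange the global ideal boundary conditions with the dual ones \emph{and} the local metric-boundary conditions $B\leftrightarrow B^\perp$ simultaneously and compatibly, so that $\star$ maps $\mathcal{D}_{W,B}(d_E)$ isometrically onto $\mathcal{D}_{\star W,\star B}(\delta_E)$ on $U(F_a)$; this is where the assumption that the metric is of model (product) type near $F_a$, so that the complexes split into horizontal and vertical pieces and $\star$ factors accordingly, is used. Granting this, the polynomial identity is formal: one writes $\mathcal{L}(\mathcal{P}_{W,B}(U(F_a)))(b,t)=\sum_{k}b^k\mathrm{Tr}(e^{-t\Delta_k})$, applies the isometry $\star:\ker\Delta_k^{[0,\infty)}\text{-eigenspaces}\to$ those of the dual complex in complementary degree, reindexes $k\mapsto n-k$, and reads off the claimed relation after the substitution $b\mapsto b^{-1}$ and multiplication by $b^n$. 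I would also remark that the Euler-characteristic corollary $L(\mathcal{P})=(-1)^nL(\mathcal{Q})$ for each of the three pairs follows by setting $b=-1$, exactly as in Proposition \ref{proposition_Lefschetz_on_adjoint}, and that because the Hodge star is a genuine isometry the ``error'' term $S_k(t)$ in Theorem \ref{Lefschetz_supertrace} is matched degree-by-degree, which is what makes the supertrace identities hold for all $t$ and not merely in the limit.
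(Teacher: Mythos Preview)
Your proposal is correct and follows essentially the same approach as the paper: both argue by invoking Proposition \ref{proposition_Lefschetz_on_adjoint} (the abstract adjoint-duality $b^n\mathcal{L}(\mathcal{S}^*)(b^{-1},t)=\mathcal{L}(\mathcal{S})(b,t)$) together with the identification of the relevant adjoint and Poincar\'e-dual complexes via the Hodge star isomorphism. The paper's proof is in fact extremely terse---it simply says the equalities ``follow from the isomorphisms between complexes, their adjoint complexes'' and cites Proposition \ref{proposition_Lefschetz_on_adjoint}---so your write-up supplies considerably more detail (the explicit role of $\star$ as a degree-reversing isometry intertwining $\mathcal{P}_{W,B}$ with $\mathcal{Q}_{\star W,\star B}$, the compatibility with the metric-boundary conditions $B\leftrightarrow B^\perp$, and the fiberwise argument for $\mathcal{R}$) than the paper itself does.
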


\begin{proof}
The proofs are similar for both the global and local complexes and follow from the isomorphisms between complexes, their adjoint complexes.  
The first equality in each follows from Proposition \ref{proposition_Lefschetz_on_adjoint}.

\end{proof}

\section{Witten deformation}
\label{section_Witten_deformation}

The use of Witten deformation to obtain Morse inequalities for degenerate critical points in the sense of Morse-Bott in the smooth setting was already discussed in Witten's original article \cite{witten1982supersymmetry}. A modified deformation method was used by Bismut in \cite{bismut1986witten}, but it was shown by Wu and Zhang in \cite{wu1998equivariant} that much simpler localization techniques in \cite{bismut1991complex} could be used, similar in spirit to Witten's original arguments, and \cite{Zhanglectures} has a brilliant exposition which we follow, with adjustments including some generalizations of those done in \cite{jayasinghe2023l2,jayasinghe2024holomorphic}.

In the first subsection we study Witten deformed Hilbert complexes, both local and global, then study a Bochner type formula for the deformed Laplace-type operator which is key to understanding the localization of global cohomology near critical points, as well as establishing a spectral gap result for the model operator at connected components of critical point sets of a flat stratified Morse-Bott function $h$.

\subsection{Witten deformed Hilbert complexes}
\label{subsection_Witten_deformed_elliptic}

Given a stratified pseudomanifold $\widehat{X}$ equipped with a wedge metric, a flat vector bundle $E$ on $X$, we denote the twisted de Rham complex by $\mathcal{P}_W(X)=\big(H_k=L^2\Omega^k(X;E),\mathcal{D}_W(P),P\big)$. Given a flat stratified Morse-Bott function which resolves to $h$, we define the Witten deformed differential $P_\varepsilon:=e^{-\varepsilon h}Pe^{\varepsilon h}$, where $\varepsilon \geq 0 $ is a parameter. Since $h$ is smooth (i.e., lifts to $C_{\Phi}^{\infty}(X)$), the map $P_\varepsilon-P=\varepsilon (dh) \wedge$ extends to a bounded map $H_k \rightarrow H_{k+1}$ for any $k$, as does $\varepsilon \iota_{dh^{\#}}$.
We follow \cite[\S 4.3]{Zhanglectures} and introduce the left and right Clifford operators 
\begin{equation}
    cl(u)=u \wedge - \iota_{u^\#}, \quad \widehat{cl}(u)=u \wedge + \iota_{u^\#}
\end{equation}
where $cl(u)$ is the Clifford multiplication for the de Rham complex. Zhang's notation is for the Clifford algebra on the tangent space, while we use that on the wedge cotangent bundle by duality. 
Zhang shows that $D_\varepsilon= D+ \varepsilon \widehat{cl}(dh)$ and we refer the reader to \cite{Zhanglectures} for more details.  
We consider forms with coefficients in a bundle $E$, equivalently sections of $F=\Lambda^*(\prescript{w}{}{T^*X}) \otimes E$.
The adjoint of $P_\varepsilon=e^{-\varepsilon h}Pe^{\varepsilon h}$ is $P^*_\varepsilon:=e^{\varepsilon h}P^*e^{-\varepsilon h}$.

Let us consider the domains for these deformed operators on $X$.
Since $h \in C_{\Phi}^{\infty}(X)$, given $u \in L^2(X;F)$ we have that $\widehat{cl}(dh)u \in L^2(X;F)$. Moreover given a sequence $(u_n) \subset C^{\infty}_c(\mathring{X},F)$, we have that $(\widehat{cl}(dh)u_n) \subset C^{\infty}_c(\mathring{X},F)$. Thus if $D(u_n)$ is $L^2$-Cauchy, then so is $D_\varepsilon(u_n)$.
It follows that $\mathcal{D}_{\min}(D)=\mathcal{D}_{\min}(D_\varepsilon)$. 
An analogous argument shows that $\mathcal{D}_{\min}(P)=\mathcal{D}_{\min}(P_\varepsilon)$ and that  $\mathcal{D}_{\min}(P^*)=\mathcal{D}_{\min}(P^*_\varepsilon)$.
Since the maximal domain of the deformed de Rham operator is defined as $\mathcal{D}_{\max}(P_\varepsilon)=\{ v \in L^2\Omega^*(X) \mid P_\varepsilon v \in L^2\Omega^*(X) \}$ where $P_\varepsilon=e^{-\varepsilon h}Pe^{\varepsilon h}$ we see that the domain $\mathcal{D}_{\max}(P_\varepsilon)$ is isomorphic to $\mathcal{D}_{\max}(P_0)$, given by multiplication by the function $e^{-\varepsilon h}$.

We define the domains for the deformed de Rham operators
\begin{equation}
    \mathcal{D}_{W}(d_\varepsilon)= \{ v \in \mathcal{D}_{\max}(d_\varepsilon) \mid  e^{+\varepsilon h}v \in \mathcal{D}_{W}(d)  \}
\end{equation}
which generalizes the domains corresponding to the mezzo-perversity $W$ to the deformed case.

\begin{remark}   
As we discussed in the previous section, the sections in the maximal domain of $D=d+\delta$ have expansions
\begin{equation}
    v \sim \alpha(v)+dx \wedge \beta(v)+ \widetilde{v}
\end{equation}
near the boundary hypersurfaces $Y^k$ where $x$ is a boundary defining function for $Y^k$, where $\alpha(v), \beta(v) \in H^{-1/2}(Y^k; \Lambda^*(T^*Y^k) \otimes \mathcal{H}^{l_k/2}(\mathcal{P}(\partial X / Y))$
and $\widetilde{v}$ are sections in $x^{1-}H^{-1}(X; \Lambda^* \prescript{w}{}{TX})$, in particular in the minimal domain of $D$.
The sections in the maximal domain for the deformed complex have expansions of the form
\begin{equation}
    v \sim \alpha_\varepsilon(v)+dx \wedge \beta_\varepsilon(v)+ \widetilde{v}
\end{equation}
where $\alpha_\varepsilon(v), \beta_\varepsilon(v)$ can be defined locally near each singular stratum as $\alpha_\varepsilon(v)|_{Y}:=\alpha(v)e^{-\varepsilon h}$ where $x$ is the radial defining function near the stratum $Y$ for a wedge metric which is locally product type, and  $\beta_\varepsilon(v)|_{Y}:=\beta(v)e^{-\varepsilon h}$.

This is because the $\{0\}$ eigenvalues of the Witten deformed Laplace-type operator on a model fundamental neighbourhood corresponds to the terms of the form $\alpha(u_\varepsilon), \beta(u_\varepsilon)$ upto terms in the minimal domain of the Laplace-type operator. Here $\alpha(u_\varepsilon)$ are in fact in the null space of the normal operator $d$ at the boundary hypersurface, while $dx \wedge \beta(u_\varepsilon)$ are in the null space of the normal operator of $\delta$. This generalizes the Cauchy data we studied above in the case for $\varepsilon=0$ to the singular case.

Then it is easy to see that 
\begin{equation}
    \mathcal{D}_{W}(d_\varepsilon)= \{ v \in \mathcal{D}_{\max}(d_\varepsilon) \mid  \alpha_\varepsilon(v)  \in H^{-1/2}(Y; \mathcal{H}^{l/2}(\mathcal{P}(\partial X / Y)) \}.
\end{equation}
\end{remark}

Observe that multiplication by $e^{\varepsilon h}$ is a homeomorphism $H_k \rightarrow H_k$ that takes a section in the domain of any closed extension $\mathcal{D}_{W}(d)$ to a section in $\mathcal{D}_{W}(d_\varepsilon)$
and conjugates $d$ and $d_\varepsilon$.
We define the \textbf{Witten deformed complex} $\mathcal{P}_{W,\varepsilon}(X)=\big(L^2\Omega^k(X;E),\mathcal{D}_{W,\varepsilon}(d_\varepsilon),d_\varepsilon\big)$ by changing the differential in $\mathcal{P}(X)$ to $d_\varepsilon$. Multiplication by $e^{-\varepsilon h}$ induces an isomorphism between the cohomology of $\mathcal{P}_{W,\varepsilon}(X)$ and $\mathcal{P}_{W,0}(X):=\mathcal{P}_{W}(X)$. 
This can be summarized using the commutative diagram 
\[\begin{tikzcd}
	{...} && {H_k} && {H_{k+1}} && {...} \\
	\\
	{...} && {H_{k}} && {H_{k+1}} && {...}
	\arrow["P", from=1-1, to=1-3]
	\arrow["P", from=1-3, to=1-5]
	\arrow["P", from=1-5, to=1-7]
	\arrow["{P_\varepsilon}", from=3-3, to=3-5]
	\arrow["{P_\varepsilon}", from=3-1, to=3-3]
	\arrow["{P_\varepsilon}", from=3-5, to=3-7]
	\arrow["{e^{-\varepsilon h}}", from=1-3, to=3-3]
	\arrow["{e^{-\varepsilon h}}", from=1-5, to=3-5]
\end{tikzcd}\]
where the domains $\mathcal{D}(P_{k})$ and $\mathcal{D}(P_{\varepsilon,k})$ can be canonically identified.
Similarly we define
\begin{equation}
    \mathcal{D}_{W^{\perp}}(d^*_\varepsilon)=\{ v \in \mathcal{D}_{\max}(d^*_\varepsilon) \mid  e^{-\varepsilon h}v \in \mathcal{D}_{W^{\perp}}(d^*)  \}.
\end{equation}

\begin{remark}
It is clear that these domains can be characterized as 
\begin{equation}
    \mathcal{D}_{W^{\perp}}(d^*_\varepsilon)= \{ v \in \mathcal{D}_{\max}(d^*_\varepsilon) \mid  \beta_\varepsilon(v) \in H^{-1/2}(Y; \mathcal{H}^{l/2}(\mathcal{P}(\partial X / Y)) \}
\end{equation}
in light of the work in \cite{Albin_hodge_theory_cheeger_spaces,Albin_hodge_theory_on_stratified_spaces}.
\end{remark}

Similar to the complexes on $X$, when we restrict to a fundamental neighbourhood $U(F_a)$ of a critical point $F_a$ of $h$, we define the \textbf{\textit{local} Witten deformed complexes} $\mathcal{P}_{W,N,\varepsilon}(U(F_a))$. 
There are isomorphisms from the Hilbert complexes $\mathcal{P}_{W,N}(U(F_a))$ to $\mathcal{P}_{W,N,\varepsilon}(U(F_a))$, and we have that $\big(\mathcal{Q}_{W,N}(U(F_a))\big)^*=\mathcal{P}_{W^{\perp},D}(U(F_a))$ to $(\mathcal{Q}^*_{W,N,\varepsilon}(U(F_a)))^*=\mathcal{P}_{W^{\perp},D,\varepsilon}(U(F_a))$ given by multiplication by the function $e^{-\varepsilon h}$ when $P=d$ (and by $e^{+\varepsilon h}$ when $P=d^*$). 
It is easy to check that the \textbf{Poincar\'e dual complex of a Witten deformed complex is the Witten deformed complex of a Poincar\'e dual complex}.

Similarly, multiplication by
$e^{+\varepsilon h}$ gives an isomorphism from $\mathcal{Q}_{W,N}(U(F_a))$ to $\mathcal{Q}_{W,N,\varepsilon}(U(F_a))$, and  $\big(\mathcal{P}_{W,N}(U(F_a))\big)^*$ to $\big(\mathcal{P}_{W,N,\varepsilon}(U(F_a))\big)^*$ for $P=d$.
We also have \textbf{\textit{local} Witten deformed cohomology complexes} 
$\mathcal{R}_{W,B,\varepsilon}(U(F_a))$, which satisfy similar properties with the adjoint and Poincar\'e duals as well.

The results in \cite{Albin_hodge_theory_cheeger_spaces,Albin_hodge_theory_on_stratified_spaces} together with explicit isomorphisms of Hilbert complexes given by the conjugation by $\exp(\pm \varepsilon h)$ suffices to extract the information about the complexes that we will require for various proofs in this article.
Lemma 2.17 of \cite{bru1992hilbert} shows that discreteness is invariant under Hilbert complex isomorphisms and Corollary 2.19 of \cite{bru1992hilbert} shows that the cohomology of the complexes are isomorphic for all values of $\varepsilon \geq 0$.

We study the Witten deformed operators and cohomology for the space in Example \ref{Example_suspension_over_torus_first}.

\begin{example}
\label{Example_suspension_torus_Witten_deformation}
Consider the suspension over the two torus $\widehat{X}=\Sigma(\mathbb{T}^2)$, the resolution of which is $X=[0,\pi]_\phi \times \mathbb{T}^2_{\theta_1,\theta_2}$ with the wedge metric $d\phi^2+\sin^2(\phi) (d\theta_1^2 + d\theta_2^2)$, on which we consider the stratified Morse function $h=\cos(\phi)$. We define the form $\gamma:=d\theta_1+d\theta_2$ and observe that $\star_{\mathbb{T}^2} \gamma= -d\theta_1+d\theta_2=:\gamma'$. We pick the ideal boundary condition $W$ on $X$ corresponding to the sub-bundle consisting of the restriction of $\gamma$ to each of the two singular points. Since $\star W^{\perp}= \star \{d\theta_1-d\theta_2\}=\{ \gamma \}$ we see that this mezzo perversity is self-dual (in particular $X$ is a Cheeger space).
The de Rham operator is
\begin{equation}
    d=d\phi \wedge \nabla_{\partial_\phi} + \sin(\phi) d\theta_1 \wedge \nabla_{\frac{1}{\sin(\phi)}\partial_{\theta_1}} +\sin(\phi) d\theta_2 \wedge \nabla_{\frac{1}{\sin(\phi)}\partial_{\theta_2}}
\end{equation}
and it is easy to check that the sections $d\theta_1, d\theta_2$ as well as $d\phi \wedge d\theta_1, d\phi \wedge d\theta_2$ are in the null space of $d$.
The de Rham operator has domain 
\begin{equation}
    \mathcal{D}_W(d)=\{ \omega \in \mathcal{D}_{\max}(d) : \alpha (\omega_{\delta}) \in H^{-1/2}(Y ; W) \}
\end{equation}
where $Y=\{ \phi=0\} \cup \{ \phi=\pi\}$.
The Witten deformed operator is 
\begin{equation}
    d_\varepsilon=(d\phi \wedge \nabla_{\partial_\phi} + \varepsilon d\cos(\phi)\wedge ) + \sin(\phi) d\theta_1 \wedge \nabla_{\frac{1}{\sin(\phi)}\partial_{\theta_1}} +\sin(\phi) d\theta_2 \wedge \nabla_{\frac{1}{\sin(\phi)}\partial_{\theta_2}}
\end{equation}
and the domain is 
\begin{equation}
    \mathcal{D}_W(d_\varepsilon)=\{ \omega \in \mathcal{D}_{\max}(d_\varepsilon) : e^{-\varepsilon \cos(\phi)}\alpha (\omega_{\delta}) \in H^{-1/2}(Y ; W) \}
\end{equation}
where $Y=\{ \phi=0\} \cup \{ \phi=\pi\}$.
The domain of $\delta$ is
\begin{equation}
    \mathcal{D}_{W^{\perp}}(\delta)=\{ \omega \in \mathcal{D}_{\max}(\delta) : \beta (\omega_{d}) \in H^{-1/2}(Y ; W^{\perp}) \}
\end{equation}
and one can check that 
\begin{equation}
    \mathcal{D}_{W^{\perp}}(\delta_{\varepsilon})=\{ \omega \in \mathcal{D}_{\max}(\delta_\varepsilon) : e^{+\varepsilon \cos(\phi)}\beta (\omega_{d}) \in H^{-1/2}(Y ; W^{\perp}) \}.
\end{equation}
The cohomology of both of the complexes $\mathcal{D}_{W}(d_\varepsilon)$, and $\mathcal{D}_{W}(\delta_\varepsilon)$ are spanned by $e^{-\varepsilon h}$, $e^{-\varepsilon h} \gamma$, and the sections $e^{+\varepsilon h}d\phi \wedge \gamma'$ and the volume form multiplied by $e^{+\varepsilon h}$, all of which are in the domain $\mathcal{D}_{W}(d_{\varepsilon}) \cap \mathcal{D}_{W^{\perp}}(\delta_{\varepsilon})$. Since $h=\cos(\phi)$ we see that the first two elements of the above spanning set converge in $L^2$ mass near $\phi=\pi$ (the function $e^{-\varepsilon \cos(\phi)}$ on the interval $[0,\pi]_{\phi}$ converges in $L^2$ mass near $\phi=\pi$ as $\varepsilon$ goes to $\infty$) and the latter two elements converge in $L^2$ mas near $\phi=0$.

The local domains can be studied similarly, and we consider them at $\phi=0$ where the Morse function is expanding.
The complex $\mathcal{P}_{W,N,\varepsilon}(d_\varepsilon)(U_{\phi=0})$ has cohomology spanned by $e^{-\varepsilon x^2}, e^{-\varepsilon x^2} \gamma$ at the tangent cone, and the complex $\mathcal{P}_{W^{\perp},N,\varepsilon}(\delta_\varepsilon)(U_{\phi=0})$ has cohomology spanned by the volume form on $(U_{\phi=0})$ multiplied by $e^{-\varepsilon x^2}$ and the form $dx \wedge \gamma' e^{-\varepsilon x^2}$ where $x$ is the linearization of $\phi$ at $\phi=0$, which is the radial function on the tangent cone at the critical point at $\phi=0$. It is easy to check that these last two sections are good approximations of the spanning set of the global cohomology that we described above, that converge in $L^2$ mass near $\phi=0$. 
\end{example}

\begin{remark}   
\label{Remark_Crucial_domain_deformed_arg}
It is easy to check that $e^{-\varepsilon_2 \cos(\phi)} d\phi \wedge d\theta_1, e^{-\varepsilon_2 \cos(\phi)} d\phi \wedge d\theta_2$ are in the null space of $d_{\varepsilon_1}$ for all values of $\varepsilon_1 \geq 0$ and $\varepsilon_2 \in \mathbb{R}$, and we emphasize this holds even for negative values of $\varepsilon_2$, and these elements are in the domain $\mathcal{D}_{\max,\varepsilon}(P_{\varepsilon_1})$.
The sections $d\theta_1$ and $d\theta_2$ are co-exact (their duals $d(\phi d\theta_1), d(\phi d\theta_2)$ under the Hodge star operator are exact). The sections $e^{+\varepsilon_2 \cos(\phi)} d\theta_1$ and $e^{+\varepsilon_2 \cos(\phi)} d\theta_2$ are in the null space of $\delta_{\varepsilon_1}$ for all values of $\varepsilon_1 \geq 0$ and $\varepsilon_2 \in \mathbb{R}$ and these elements are in the domain $\mathcal{D}_{\max,\varepsilon}(\delta_{\varepsilon_1})$.
\end{remark}

\subsection{Bochner identities, localization and model operators}
\label{subsection_Bochner_Schrodinger}

Consider the Laplacian $D_\varepsilon^2 = (P_\varepsilon+P_\varepsilon^*)^2 = \Delta_{\varepsilon}=P_{\varepsilon}P^*_{\varepsilon}+P^*_{\varepsilon}P_{\varepsilon} $. We show that we have a Bochner type formula
\begin{equation}    
\label{equation_Witten_deformed_Laplace_type_expansion}
\Delta_{\varepsilon}=\Delta+\varepsilon^2\Vert dh\Vert^2+\varepsilon K
\end{equation}
generalizing that of Proposition 4.6 of \cite{Zhanglectures} for Morse functions on smooth manifolds.
As in \eqref{horizontal_vertical_Laplace_operators}, the undeformed operator on a fundamental neighbourhood $U(F_a)$ with a model metric can be written as $\Delta=\Delta^{H}+\Delta^{V}$. 
Since $h$ is constant on $F_a$, we see that $|dh|^2$ is identically $0$ on $F_a$, which is where the global harmonic sections on $X$ concentrate as we show in Section \ref{section_technical}. Proposition \ref{Propostion_growth_estimate_witten_deformed} is key for this argument and is a straightforward corollary of the following Proposition.

\begin{proposition}[Bochner type formula]
\label{Proposition_stratified_Morse_Laplace_structure}
Let $\widehat{X}$ be a stratified pseudomanifold with a wedge metric and a stratified Morse-Bott function $h$, and let $\mathcal{P}_W(X)$ be a de Rham complex. For any $\varepsilon \in \mathbb{R}$, given $s, s' \in \mathcal{D}_W(D_{\varepsilon}^2)$, we have
\begin{equation}
\label{Zhangs_Bochner_formula_1}
    \langle D_\varepsilon s, D_\varepsilon s' \rangle_{L^2(X;F)}=\langle s, [D^2+\varepsilon K  +\varepsilon^2 |dh|^2] s' \rangle_{L^2(X;F)}.
\end{equation} 
where $F=\Lambda^*(\prescript{w}{}{T^*X})$ and $K=(D \widehat{cl}(dh) + \widehat{cl}(dh) D)$ is a $0$-th order operator.
\end{proposition}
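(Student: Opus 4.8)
The plan is to establish the Bochner identity \eqref{Zhangs_Bochner_formula_1} on the dense subspace $C_c^\infty(\mathring X; F)$ first, and then extend it to $\mathcal{D}_W(D_\varepsilon^2)$ by a closure/self-adjointness argument. On $C_c^\infty(\mathring X; F)$, where everything is honestly smooth and compactly supported away from the singular strata, the computation is purely the smooth one from \cite[\S 4.3, Prop. 4.6]{Zhanglectures}: one writes $D_\varepsilon = D + \varepsilon\,\widehat{cl}(dh)$ (as recalled in Subsection \ref{subsection_Witten_deformed_elliptic}, following Zhang), squares, and uses $\widehat{cl}(dh)^2 = |dh|^2$ (from the Clifford relation $\{\widehat{cl}(u),\widehat{cl}(v)\} = 2g(u,v)$) to get
\begin{equation*}
    D_\varepsilon^2 = D^2 + \varepsilon\bigl(D\,\widehat{cl}(dh) + \widehat{cl}(dh)\,D\bigr) + \varepsilon^2 |dh|^2 = D^2 + \varepsilon K + \varepsilon^2|dh|^2.
\end{equation*}
The only point to check carefully here is that $K = D\,\widehat{cl}(dh) + \widehat{cl}(dh)\,D$ is a zeroth-order (bundle endomorphism) operator: the first-order parts cancel because $\widehat{cl}(dh)$ anticommutes appropriately with $cl(e^i)$, and the surviving term involves only $\nabla^F dh = \nabla(dh)$ together with Clifford factors, which is pointwise. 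Since $h \in C_\Phi^\infty(X)$, the tensor $\nabla(dh)$ is a smooth section of an appropriate bundle over $X$ and bounded near the strata (indeed $dh$ vanishes on the critical strata and $h$ is smooth in the $\Phi$-sense), so $K$ is a bounded bundle endomorphism; this is exactly what makes the zeroth- and first-order perturbation terms bounded maps on $L^2$, as already noted in the excerpt.

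Next I would promote this to the stated form. For $s, s' \in \mathcal{D}_W(D_\varepsilon^2) = \mathcal{D}_W(\Delta_\varepsilon)$, recall (Subsection \ref{subsection_Witten_deformed_elliptic}) that $\mathcal{D}_W(D_\varepsilon)$ is the self-adjoint domain for $D_\varepsilon$ associated to the mezzo-perversity $W$, obtained from $\mathcal{D}_W(D)$ by conjugation with $e^{\pm\varepsilon h}$; in particular $\langle D_\varepsilon s, D_\varepsilon s'\rangle = \langle s, \Delta_\varepsilon s'\rangle$ by self-adjointness of $D_\varepsilon$ and the definition $\mathcal{D}_W(\Delta_\varepsilon) = \{v \in \mathcal{D}_W(D_\varepsilon) : D_\varepsilon v \in \mathcal{D}_W(D_\varepsilon)\}$ (as in \eqref{Laplacian_D_type}). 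So it remains to identify $\Delta_\varepsilon s' = (D^2 + \varepsilon K + \varepsilon^2|dh|^2)s'$ as an $L^2$ section. Since $\varepsilon K + \varepsilon^2|dh|^2$ is a bounded symmetric endomorphism, $\mathcal{D}_W(\Delta_\varepsilon) = \mathcal{D}_W(\Delta)$ (the undeformed self-adjoint domain for the chosen mezzo-perversity), and on this common domain the distributional identity $\Delta_\varepsilon = \Delta + \varepsilon K + \varepsilon^2|dh|^2$ holds — this follows because it holds on $C_c^\infty(\mathring X;F)$, both sides are closed operators with the same domain, and $C_c^\infty(\mathring X;F)$ is a core. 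More concretely: for $v \in \mathcal{D}_W(\Delta)$ pick $v_n \in C_c^\infty(\mathring X; F)$ with $v_n \to v$ in the graph norm of $D$ (possible since $\mathcal{D}_{\min}$-type approximation applies, using that $\mathcal{D}_W(D)$ sits between minimal and maximal and the relevant cores are understood in \cite{Albin_hodge_theory_cheeger_spaces,Albin_hodge_theory_on_stratified_spaces}); apply the smooth identity to each $v_n$ and pass to the limit, the perturbation terms converging in $L^2$ by boundedness. This yields \eqref{Zhangs_Bochner_formula_1}.

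I expect the main obstacle to be the functional-analytic bookkeeping at the strata: namely verifying that $C_c^\infty(\mathring X; F)$ is a core for $\Delta_W$ (equivalently that the graph-norm closure from smooth compactly supported forms, after imposing the ideal boundary conditions of $W$, recovers exactly $\mathcal{D}_W(\Delta_\varepsilon)$), and that conjugation by $e^{\pm\varepsilon h}$ genuinely intertwines these domains — both of which we may invoke from \cite{Albin_hodge_theory_cheeger_spaces,Albin_hodge_theory_on_stratified_spaces} and from the discussion in Subsection \ref{subsection_Witten_deformed_elliptic}. A secondary point is confirming the boundedness of $K$ near the singular strata; here one uses that $h = h'\circ\beta$ with $h'\in C^0(\widehat X)$ lifting to $C_\Phi^\infty(X)$, so that $dh$ is a bounded wedge one-form and $\nabla(dh)$ is controlled, making $\widehat{cl}(dh)$ and $K$ bounded operators on $L^2(X;F)$ — exactly as asserted (without proof) in the paragraph preceding the proposition. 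Once these are in hand the identity itself is the elementary Clifford-algebra computation, identical to the smooth case, so the statement follows.
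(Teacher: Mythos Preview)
Your approach differs from the paper's and contains one genuine gap. The paper does not first prove an operator identity on a core and then extend by closure; instead it works directly with $s,s'\in\mathcal{D}_W(D_\varepsilon^2)$, expands $\langle D_\varepsilon s, D_\varepsilon s'\rangle$ into its four terms, and applies the Green--Stokes formula \eqref{equation_Greens_identity} to move $D$ and $\widehat{cl}(dh)$ across the pairing. This produces boundary integrals over $\partial X$, which the paper then combines into $\int_{\partial X}\langle i\sigma_1(D_\varepsilon)(d\rho_X)s,\,D_\varepsilon s'\rangle\,\operatorname{dvol}_{\partial X}$ and observes that this vanishes precisely because $s$ and $D_\varepsilon s'$ both lie in the self-adjoint domain $\mathcal{D}_W(D_\varepsilon)$. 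No approximation by compactly supported sections is used.

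The gap in your argument is the claim that $C_c^\infty(\mathring X;F)$ is a core for $\mathcal{D}_W(D)$ (or $\mathcal{D}_W(\Delta)$). This is false for general mezzo-perversities: by definition the graph closure of $C_c^\infty(\mathring X;F)$ is $\mathcal{D}_{\min}(D)$, and the entire point of the non-Witt theory in \cite{Albin_hodge_theory_cheeger_spaces,Albin_hodge_theory_on_stratified_spaces} is that $\mathcal{D}_{\min}(D)\subsetneq\mathcal{D}_W(D)\subsetneq\mathcal{D}_{\max}(D)$ when $W$ is a nontrivial flat subbundle. So your sentence ``pick $v_n\in C_c^\infty(\mathring X;F)$ with $v_n\to v$ in the graph norm of $D$'' cannot be justified for $v\in\mathcal{D}_W(\Delta)$ outside the minimal domain.

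That said, your overall strategy is salvageable, because the core argument is unnecessary: once you have $\langle D_\varepsilon s,D_\varepsilon s'\rangle=\langle s,D_\varepsilon^2 s'\rangle$ from self-adjointness, the identity $D_\varepsilon^2=D^2+\varepsilon K+\varepsilon^2|dh|^2$ holds \emph{as differential operators on $X^{reg}$}, hence distributionally on any $L^2$ section, with no approximation required. Since the perturbation $\varepsilon K+\varepsilon^2|dh|^2$ is bounded, $D^2s'=D_\varepsilon^2 s'-(\varepsilon K+\varepsilon^2|dh|^2)s'\in L^2$ and the right-hand side of \eqref{Zhangs_Bochner_formula_1} is well defined. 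Compared with the paper, your route hides the boundary analysis inside the abstract self-adjointness of $D_\varepsilon$ on $\mathcal{D}_W(D_\varepsilon)$, while the paper's Green--Stokes computation makes the vanishing of the boundary term explicit.
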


\begin{proof}
We begin by expanding the left hand side of the expression to get 
\begin{multline}
\langle Ds, Ds' \rangle_{L^2(X;F)} + \varepsilon \langle Ds, \widehat{cl}(dh)s' \rangle_{L^2(X;F)}+\varepsilon \langle  \widehat{cl}(dh)s, Ds' \rangle_{L^2(X;F)}+ \varepsilon^2 \langle \widehat{cl}(dh)s, \widehat{cl}(dh)s' \rangle_{L^2(X;F)}\\
= \langle s, [D^2+\varepsilon^2 |dh|^2]s' \rangle_{L^2(X;F)}  +\int_{\partial X} \langle i cl(d\rho_X)s,D s' \rangle_F \operatorname{dvol}_{\partial X}\\+\varepsilon [\langle Ds, \widehat{cl}(dh)s' \rangle + \langle \widehat{cl}(dh)s, Ds' \rangle_{L^2(X;F)} ],
\end{multline}
and we can expand the terms corresponding to the factor $\varepsilon^1$ as
\begin{multline}
\label{equation_with_boundary_term_Dirac_deformed}
    \langle Ds, \widehat{cl}(dh)s' \rangle_{L^2(X;F)}+ \langle \widehat{cl}(dh)s, Ds' \rangle_{L^2(X;F)} \\
    =\langle s, [D \widehat{cl}(dh) + \widehat{cl}(dh) D]s' \rangle_{L^2(X;F)}+ \int_{\partial X} \langle i cl(d\rho_X)s,\widehat{cl}(dh)s' \rangle_F \operatorname{dvol}_{\partial X}
\end{multline}
up to the factor of $\varepsilon$.
Thus the boundary contribution on $\partial X$ is 
\begin{equation}
    \int_{\partial X} \langle i cl(d\rho_X)s, (D + \varepsilon \widehat{cl}(dh))s' \rangle_F \operatorname{dvol}_{\partial X}=    \int_{\partial X} \langle i \sigma_1(D_\varepsilon)(d\rho_X) s, s'' \rangle_F \operatorname{dvol}_{\partial X}=0
\end{equation}
where $s''=D_\varepsilon s' \in \mathcal{D}_W(D)$ for $s' \in \mathcal{D}_W(D_\varepsilon^2)$ and the boundary integral vanishes any sections $s,s''$ in any self-adjoint domain $\mathcal{D}(D_\varepsilon)$.

It is easy to verify that $K$ is a zeroth order operator and we refer to \cite{Zhanglectures} (c.f., \cite{witten1982supersymmetry}) for details in the smooth setting ($X^{reg}$) in the case of isolated fixed points, and it is similar in the non-isolated case.
\end{proof}

Now we can prove the following estimate away from the critical points of a flat stratified Morse-Bott function. 

\begin{proposition}
\label{Propostion_growth_estimate_witten_deformed}
In the same setting as Proposition \ref{Proposition_stratified_Morse_Laplace_structure}, there exist constants $C>0$, $\varepsilon_0>0$ such that for any section $s \in \mathcal{D}_W(D_\varepsilon)$ with $\text{supp}(s) \subset (X \setminus \bigcup_{a \in \mathcal{I}} U({F_a}))$ and $\varepsilon \geq \varepsilon_0$, one has 
\begin{equation}
    \Vert D_{\varepsilon}s\Vert_{L^2(X;F)} \geq (C \sqrt{\varepsilon}) \Vert s\Vert_{L^2(X;F)}.
\end{equation}
\end{proposition}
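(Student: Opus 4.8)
The plan is to use the Bochner-type identity of Proposition \ref{Proposition_stratified_Morse_Laplace_structure} together with a lower bound on $|dh|^2$ away from the critical set, absorbing the linear-in-$\varepsilon$ error term $\varepsilon K$ into the quadratic term $\varepsilon^2|dh|^2$ for $\varepsilon$ large. Concretely, for $s \in \mathcal{D}_W(D_\varepsilon)$ with $\operatorname{supp}(s) \subset X \setminus \bigcup_{a} U(F_a)$, one would like to write (formally, using that $s \in \mathcal{D}_W(D_\varepsilon^2)$ can be arranged by a density argument, or directly since the Bochner identity extends to $\mathcal{D}_W(D_\varepsilon)$ by closedness of the relevant forms)
\begin{equation}
\label{eq_plan_bochner}
\Vert D_\varepsilon s\Vert^2 = \Vert Ds\Vert^2 + \varepsilon\langle s, Ks\rangle + \varepsilon^2\langle s, |dh|^2 s\rangle \geq \langle s, (\varepsilon^2|dh|^2 + \varepsilon K)s\rangle,
\end{equation}
where the boundary terms vanish because $s$ lies in a self-adjoint domain for $D_\varepsilon$ (exactly as in the proof of Proposition \ref{Proposition_stratified_Morse_Laplace_structure}), and where I have simply dropped the non-negative term $\Vert Ds\Vert^2 \geq 0$. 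The first step, then, is to justify that \eqref{eq_plan_bochner} applies to all $s \in \mathcal{D}_W(D_\varepsilon)$ supported in the given region — this needs only the $L^2$-pairing version of the identity and the fact that the support condition kills all boundary contributions at $\partial X$ that are not already handled by the self-adjointness.

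The second step is the geometric input: there is a constant $\kappa > 0$ with $|dh|^2_{g_w} \geq \kappa$ pointwise on the closed set $X \setminus \bigcup_a U(F_a)$. This is where I expect the main subtlety to lie. On a compact smooth manifold this is immediate since $|dh|^2$ is continuous, positive off the critical set, and the complement of the fundamental neighborhoods is compact. In the present singular/wedge setting one must check that $|dh|^2_{g_w}$, computed with the degenerate wedge metric, is bounded below away from the $F_a$. The key point is that $h \in C^\infty_\Phi(X)$ and the critical set is precisely $\beta(|dh|_{g_w}^{-1}(0))$; near a singular stratum not containing any $F_a$, the local model of $h$ (via the Morse-Bott / non-degeneracy structure, Definition \ref{definition_non_degeneracy_critical_point_sets}) shows $h$ behaves like $f + \rho_s^2 - \rho_u^2$ so that $|dh|_{g_w}$ is comparable to $\rho_s + \rho_u$ in the relevant coordinates, which is bounded away from zero once one is outside the region $\{\max(\rho_{F_a,u},\rho_{F_a,s}) \leq 1\}$ defining $U(F_a)$. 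One should combine this with a compactness argument on $\widehat{X}$ (which is compact): cover $X \setminus \bigcup_a U(F_a)$ by finitely many coordinate patches, use continuity of $|dh|_{g_w}^2$ in the interior and the model computation near each stratum, and conclude a uniform positive lower bound $\kappa$.

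The third step combines the two: $K$ is a bounded ($0$-th order) operator with some operator norm $\Vert K\Vert \leq C_K$ (uniform, as $K = D\widehat{cl}(dh) + \widehat{cl}(dh)D$ depends only on $h$ and the metric), so
\begin{equation}
\label{eq_plan_final}
\Vert D_\varepsilon s\Vert^2 \geq \varepsilon^2\kappa\Vert s\Vert^2 - \varepsilon C_K\Vert s\Vert^2 = \varepsilon\bigl(\varepsilon\kappa - C_K\bigr)\Vert s\Vert^2 \geq \tfrac{1}{2}\varepsilon^2\kappa\Vert s\Vert^2
\end{equation}
once $\varepsilon \geq \varepsilon_0 := 2C_K/\kappa$, giving $\Vert D_\varepsilon s\Vert \geq (C\sqrt{\varepsilon})\Vert s\Vert$ with $C = \sqrt{\kappa/2}$ (in fact this gives the stronger bound with $\varepsilon$ in place of $\sqrt{\varepsilon}$, which is consistent — the statement asks only for $\sqrt{\varepsilon}$). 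The main obstacle is genuinely Step 2: making precise, in the wedge-metric setting with possibly stratified critical sets of Type II, that $|dh|_{g_w}$ does not decay to zero as one approaches a singular stratum disjoint from the critical set; this requires unwinding the local models in Definition \ref{definition_non_degeneracy_critical_point_sets} and the definition of the fundamental neighborhoods, together with compactness of $\widehat{X}$. Everything else is formal manipulation of the Bochner identity already established.
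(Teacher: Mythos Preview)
Your proposal is correct and follows essentially the same approach as the paper's proof: apply the Bochner identity of Proposition~\ref{Proposition_stratified_Morse_Laplace_structure}, drop the nonnegative term $\Vert Ds\Vert^2$, use a uniform positive lower bound $C_1$ on $|dh|$ over the complement of the fundamental neighbourhoods, bound the zeroth-order term $K$ by a constant, and conclude via $\varepsilon^2 C_1^2 - \varepsilon|K| \geq C\varepsilon$ for large $\varepsilon$, extending to all of $\mathcal{D}_W(D_\varepsilon)$ by density. The paper simply asserts the existence of the minimum $C_1$ without the careful discussion you give in Step~2; your observation that the argument actually yields the stronger linear bound in $\varepsilon$ (rather than $\sqrt{\varepsilon}$) is also implicit in the paper's inequality.
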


\begin{proof}
This is similar to Proposition 4.7 of \cite{Zhanglectures} (c.f. Proposition 6.24 of \cite{jayasinghe2023l2}).
Let $C_1$ be the minimum value of $\Vert dh\Vert$ on $X \setminus \bigcup_{a \in \mathcal{I}} U({F_a})$.
Using the Bochner type formula \eqref{equation_Witten_deformed_Laplace_type_expansion},  it is easy to see that there exists a finite constant $K$ such that
\begin{equation}
\Vert D_{\varepsilon} s\Vert_{L^2(X;F)}^2  = \langle D_{\varepsilon} s, D_{\varepsilon} s \rangle_{L^2(X;F)} \geq (\varepsilon^2 C_1^2-\varepsilon |K|)\Vert s\Vert_{L^2(X;F)}
\end{equation}
since $\langle D s, D s \rangle_{L^2(X;F)}$ is positive.
Finally as the inequality holds for all $s \in \mathcal{D}_{W}(D_\varepsilon^2)$ it holds by density on the closure of $\mathcal{D}_{W}(D_\varepsilon^2)$ in the graph norm of $\mathcal{D}_{W}(D_\varepsilon)$.
\end{proof}

The Laplace-type operator on a fiber $U_a$ of $U(F_a)$, the (resolved) fundamental neighborhood of a critical point set $\widehat{F_a}$ endowed with a product type metric, determines a model operator on the tangent cone (a model harmonic oscillator) for which the local cohomology space can be understood well enough to construct an ansatz for the global and local harmonic sections in Section \ref{section_technical}. The elements in the null space of the model operator on the tangent cone, restrict to elements on the truncated tangent cone satisfying the boundary conditions corresponding to the choices of complexes on $U_s$ and $U_u$ described above. For the de Rham complex on the infinite cone with the conic metric (the tangent cone at isolated conic singularities), the null space of the model operator has been computed explicitly in \cite{ludwig2017index}, and was further worked on in \cite{jayasinghe2023l2}. We can generalize this to the flat stratified Morse-Bott setting where the model neighbourhood can be described as in Definition \ref{definition_resolved_fundamental_neighbourhood}.

\begin{definition}
\label{definition_tangent_cone_truncations}
Recall the notation for the resolved fundamental neighborhood in Definition \ref{definition_resolved_fundamental_neighbourhood}.
For any $z > 0$, we define $U_{a,s,z}:=\{x_a \leq z\} \subseteq U_{a,s}$, $U_{a,u,z}:=\{r_a \leq z\} \subseteq U_{a,u}$, and $U_{a,z}:=U_{a,s,z} \times U_{a,u,z}$.
We can extend the fibration $U(F_a)$ to $U_z(F_a)$ by extending the fibers from $U_{a}$ to $U_{a,z}$ (note that $U(F_a)=U_{z=1}(F_a)$).
Given a complex $\mathcal{P}_{W,B}(U(F_a))$ (respectively $\mathcal{R}_{W,B}(U(F_a))$), we define the complex $\mathcal{P}_{W,B,\varepsilon}(U_z(F_a))$ (respectively $\mathcal{R}_{W,B,\varepsilon}(U_z(F_a))$) similarly to how we defined it for the case where $z=1$.
We denote the non-compact resolved cone by $U_{a,\infty}$, and the corresponding fibration by $U_\infty(F_a)$, which we call the \textbf{infinite fundamental neighbourhood}.
\end{definition}

We observe that while the local complexes $\mathcal{P}_{W,B,\varepsilon}(U_z(F_a))$, $\mathcal{Q}_{W^{\perp},B^{\perp},\varepsilon}(U_z(F_a))$ and  the normal cohomology groups $\mathcal{R}_{W,B,\varepsilon}(U_z(F_a))$ are only defined for $z \in (0,\infty)$ for $\varepsilon \geq 0$, when we restrict to $\varepsilon>0$ they are defined for $z=\infty$ as well, and the cohomology groups of all these complexes are isomorphic as discussed above. In the case of $z=\infty$, the key is that factors of $e^{-\varepsilon (x_a^2+r_a^2)}$ appear in the local cohomology, giving sufficient decay at the infinite end of the cones for the harmonic forms to be $L^2$ bounded.

In subsection \ref{subsection_local_cohomology_complexes}, we introduced horizontal and vertical de Rham operators $d_E^H, d_E^V$ on fundamental neighbourhoods of critical point sets $U(F_a)$ as well as the horizontal and vertical Laplace-type operators $\Delta^H, \Delta^V$, and the latter two commute. Moreover since the derivatives of the vectors $\partial_x$ occur only in the vertical directions, the multiplier function defined by multiplication by the Stratified Morse-Bott function commutes with the horizontal Laplacian on the fundamental neighbourhood with the model metric.
Thus we one can write the Witten deformed Laplacian as
\begin{equation}
\label{equation_model_Witten_deformed_Laplacian}
    \Delta_\varepsilon:= \Delta^H+ \Delta^V_{\varepsilon}
\end{equation}
where $\Delta^V_{\varepsilon}$ is the Witten deformed Laplacian on the normal fibers over the regular part $F_a^{reg}$.

\begin{proposition}[Model equations and spectral gap]
\label{Proposition_model_spectral_gap_modified_general}
In the setting of this subsection, consider the complex $\mathcal{P}_{W, B, \varepsilon}(U_{z/\sqrt{\varepsilon}}(F_a))$ for some fixed $z \in [0,\infty]$. Let $\Delta^V_{\varepsilon}=(D^V_\varepsilon)^2$ be the Laplace-type operator on the vertical fibers over the regular part of $F_a$ for $\varepsilon>0$. 
Then
\begin{equation}
    \text{Spec}(\Delta_\varepsilon) \subseteq \{0\} \cup [C_1, \infty) \cup [C_2\varepsilon, \infty).
\end{equation}
\end{proposition}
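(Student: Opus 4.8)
The plan is to exploit the decomposition \eqref{equation_model_Witten_deformed_Laplacian}, namely $\Delta_\varepsilon = \Delta^H + \Delta^V_\varepsilon$, where the two summands commute by the flatness hypothesis (Definition \ref{Definition_flatness_assumption}). Since they commute, $\Delta_\varepsilon$ can be simultaneously diagonalized, and its spectrum is contained in the sumset $\text{Spec}(\Delta^H) + \text{Spec}(\Delta^V_\varepsilon)$. Because $\Delta^H \geq 0$, it suffices to understand the spectrum of the vertical model operator $\Delta^V_\varepsilon$ on the fiber $U_{a,z/\sqrt{\varepsilon}}$, with the boundary conditions $B$ at the metric boundary described in Subsection \ref{subsubsubsection_Neumann_boundary_condition} (and the induced ideal boundary conditions at the singular strata of the fiber). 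The key analytic input is a \emph{spectral gap} for $\Delta^V_\varepsilon$: its spectrum should lie in $\{0\} \cup [C_2 \varepsilon, \infty)$, i.e. the nonzero eigenvalues are pushed out to order $\varepsilon$.

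First I would reduce to the vertical fiber. Using the product-type model metric near $F_a$ (Proposition \ref{Proposition_model_metric_existence}) and the splitting of the fiber $U_a = U_{a,s} \times U_{a,u}$, write $\Delta^V_\varepsilon$ as a sum of the Witten-deformed Laplace-type operators on the stable and unstable conic factors, where on $U_{a,s}$ the deformation is by $h = \rho_{F_a,s}^2$ (attracting, i.e. Neumann-type boundary condition) and on $U_{a,u}$ by $-\rho_{F_a,u}^2$ (expanding, Dirichlet-type / adjoint boundary condition). On each conic factor the model operator is a harmonic-oscillator-type operator on $C(Z)$ of the form $-\partial_x^2 + \tfrac{1}{x^2}(\text{operator on link}) + \varepsilon^2 x^2 + \varepsilon K$, as in the Bochner formula \eqref{equation_Witten_deformed_Laplace_type_expansion} of Proposition \ref{Proposition_stratified_Morse_Laplace_structure}. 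This is the setting analyzed explicitly in \cite{ludwig2017index} and \cite{jayasinghe2023l2} for isolated conic points; the flatness assumption lets us transport those results fiberwise. Rescaling $x \mapsto x/\sqrt{\varepsilon}$ turns the truncated cone $U_{a,s,z/\sqrt{\varepsilon}}$ into $U_{a,s,z}$ and turns $\Delta^V_\varepsilon$ into $\varepsilon$ times a fixed ($\varepsilon$-independent) model operator $\mathcal{L}$; hence $\text{Spec}(\Delta^V_\varepsilon) = \varepsilon \cdot \text{Spec}(\mathcal{L})$.

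Next I would establish that $\mathcal{L}$ has a spectral gap above $0$: since $\mathcal{L}$ is a nonnegative self-adjoint operator with discrete spectrum (the cone is truncated, or in the $z = \infty$ case the harmonic-oscillator confinement $\varepsilon^2 x^2$ plus the $L^2$ decay $e^{-\varepsilon(x_a^2 + r_a^2)}$ of the harmonic forms gives discreteness — this is exactly the point flagged after Definition \ref{definition_tangent_cone_truncations}), its kernel is the space of local harmonic forms computed in Subsection \ref{subsection_local_cohomology_complexes}, and the rest of its spectrum is bounded below by some $c_0 > 0$. Setting $C_2 = c_0$ gives $\text{Spec}(\Delta^V_\varepsilon) \subseteq \{0\} \cup [C_2\varepsilon, \infty)$. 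Combining with $\Delta^H \geq 0$: an eigenvalue of $\Delta_\varepsilon$ is $\mu^H + \mu^V$ with $\mu^H \geq 0$, $\mu^V \in \{0\} \cup [C_2\varepsilon,\infty)$. If $\mu^V = 0$ the eigenvalue is $\mu^H$, which is $0$ or bounded below by the first nonzero eigenvalue $C_1$ of $\Delta^H$ acting on $L^2\Omega^\bullet(F_a^{reg}; \mathcal{H}^\bullet \otimes E)$ — here I use that $F_a$ is compact (or $\Delta^H$ Fredholm) so $\Delta^H$ restricted to the orthogonal complement of its kernel has a spectral gap $C_1 > 0$. If $\mu^V \geq C_2\varepsilon$ the eigenvalue is $\geq C_2\varepsilon$. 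This yields $\text{Spec}(\Delta_\varepsilon) \subseteq \{0\} \cup [C_1,\infty) \cup [C_2\varepsilon,\infty)$, as claimed.

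The main obstacle I expect is the spectral gap for $\mathcal{L}$ \emph{uniformly across the singular fiber structure} — that is, verifying that the boundary/ideal-boundary conditions $B$ (Neumann on stable, Dirichlet on unstable, plus the mezzo-perversity conditions $({W^k}^\perp,\{0\})\circ\mathcal{C}_{Y^k} = 0$ at the link strata) still produce a nonnegative operator whose nonzero spectrum is bounded away from zero independently of $\varepsilon$. For isolated conic points this is \cite{ludwig2017index,jayasinghe2023l2}, but here one must check that the separation-of-variables argument on $C(Z_{a,s}) \times C(Z_{a,u})$ interacts correctly with the product-of-cones structure (using the product-is-a-cone observation cited from \cite[\S 2.1]{Jesus2018Wittensgeneral}) and that the cross-term operator $\varepsilon K$ in \eqref{equation_Witten_deformed_Laplace_type_expansion}, being zeroth order and of size $O(\varepsilon)$, does not destroy the gap — one handles it by the same eigenvalue-estimate argument as in Proposition \ref{Propostion_growth_estimate_witten_deformed}, absorbing $\varepsilon K$ against the confining term $\varepsilon^2|dh|^2$ away from the zero set and against the link-Laplacian gap near it. A secondary subtlety is the $z = \infty$ case, where one must confirm that passing to the infinite cone does not introduce continuous spectrum below $C_2\varepsilon$; this follows because for $\varepsilon > 0$ the potential $\varepsilon^2\|dh\|^2 = \varepsilon^2(x_a^2+r_a^2)$ is confining, so the resolvent is compact and the spectrum remains discrete.
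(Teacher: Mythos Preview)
Your proposal is correct and follows essentially the same route as the paper: both use the splitting $\Delta_\varepsilon = \Delta^H + \Delta^V_\varepsilon$ with the commutativity coming from flatness, identify the ``middle'' eigenvalues as the nonzero spectrum of $\Delta^H$ on the normal cohomology complex (your $\mu^V=0$ case giving $\mu^H \geq C_1$), and obtain the ``large'' eigenvalue bound $C_2\varepsilon$ from a spectral gap for the vertical model operator on the conic fiber. Your version makes the rescaling $x \mapsto x/\sqrt{\varepsilon}$ explicit (this is exactly why the statement is formulated on $U_{z/\sqrt{\varepsilon}}(F_a)$), whereas the paper's proof leaves that implicit and simply cites the explicit cone computations of \cite{jayasinghe2023l2,ludwig2017index}; your concern about the zeroth-order term $\varepsilon K$ is already absorbed by the rescaling, since after the change of variable it becomes a fixed contribution to $\mathcal{L}$ rather than a perturbation to be separately controlled.
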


In the case of $z=\infty$, we note that $\mathcal{P}_{W, B, \varepsilon}(U_{z/\sqrt{\varepsilon}}(F_a))=\mathcal{P}_{W, B, \varepsilon}(U_{z}(F_a))$.

\begin{proof}
Since the Witten deformed Laplacian splits as in equation \eqref{equation_model_Witten_deformed_Laplacian}, we can proceed by a separation of variables argument, and see that the eigensections of the Laplacian of the complex $\mathcal{P}_{W, B, \varepsilon}(U_{z}(F_a))$ can be described as the products of the eigensections of $\Delta^H$ and $\Delta^V_{\varepsilon}$.
Thus the eigensections of the operator $\Delta_\varepsilon$ split into three.
The first are the zero eigenvalue eigensections which correspond to the harmonic representatives of the local cohomology of the complex $\mathcal{P}_{W, B, \varepsilon}(U_{z}(F_a))$ (alternately $\mathcal{R}_{W, B, \varepsilon}(U_{z}(F_a))$), which we call the \textbf{small eigenvalue eigensections}.
The second are non-zero eigenvalue eigensections of the complex $\mathcal{R}_{W, B, \varepsilon}(U_{z}(F_a))$, which we will call the \textbf{middle eigenvalue eigensections}. Since the Laplace-type operator splits, and coefficient bundle over $F_a$ for the normal cohomology complex corresponds to sections which are in the null space of $\Delta^V_{\varepsilon}$, these eigenvalues will not change as $\varepsilon$  changes. The corresponding eigenvalues can be bounded by below by $C_1$. The third set of eigensections correspond to the eigensections in 
$\mathcal{P}_{W, B, \varepsilon}(U_{z}(F_a))$ which are the complement of the small and medium eigensections, which we will call the \textbf{large eigensections}, and the eigenvalues of which are in $[C_2\varepsilon, \infty)$.

To see this, we observe that the eigenvalues of the operator $\Delta^V_\varepsilon$ on the normal fibers over the regular part split into small and large eigenvalue eigensections.
In the case of Morse-Bott functions, on spaces satifying the Witt condition, this was proven in Proposition 6.26 of \cite{jayasinghe2023l2}. For flat stratified Morse-Bott functions, in the non-Witt case, the spectrum of the deformed Laplace-type operators can still be computed explicitly using separation of variables by cones similar to \cite{jayasinghe2023l2} and since we pick a suitably scaled metric only zero eigenvalue eigensections can appear for any ideal boundary condition.

Then using the separation of variables ansatz and using the fact that there is an orthonormal countable basis of eigensections of $\Delta_\varepsilon$ for both the complexes $\mathcal{R}_{W, B, \varepsilon}(U_{z}(F_a))$ and $\mathcal{P}_{W, B, \varepsilon}(U_{z}(F_a))$, we see that the non-zero eigenvalues split into the medium and large eigenvalues as described above.
\end{proof}

\section{Proof of main results}
\label{section_technical}

\subsection{The Witten instanton complex}

Given $z \in (0, \infty]$, choose $\gamma_{a,z}: \mathbb{R}_s \rightarrow[0,z]$ to be a function in $C^{\infty}(\mathbb{R})$ such that $\gamma_a(s)=1$ when $s<z/2$, and $\gamma_{a,z}(s)=0$ when $s>3z/4$. Recall that our critical point set $\text{crit}(h)$ has a decomposition into connected components,
\[ \text{crit}(h) := \bigcup_{a\in \mathcal{I}} \widehat{F_a},     \]
each with corresponding singular fundamental neighbourhoods, $U(\widehat{F_a})$, and their resolutions $U(F_a)$ (see Definition \ref{definition_resolved_fundamental_neighbourhood}). At each such resolved fundamental neighbourhood $U(F_a)$, we define $t_a=x_a^2+r_a^2$ where the functions $x_a,r_a$ are the functions in Definition \ref{definition_resolved_fundamental_neighbourhood}, and we drop the subscripts for fixed critical point sets $\widehat{F_a}$. 
Given a harmonic form $\omega_{a} \in \mathcal{H}(\mathcal{P}_{W, B}(U_z({F_a})))$, we define $\omega_{a,\varepsilon}:=\omega_a e^{-t \varepsilon} \in \mathcal{H}(\mathcal{P}_{W, B, \varepsilon}(U_z({F_{a}})))$ and 
\begin{equation}
\label{equation_modify_forms_cutoff}
    \alpha_{a, z, \varepsilon} :=\left\|\gamma_a(t) \omega_{a,\varepsilon}\right\|_{L^{2}(U({F_{a,z}}))}, \hspace{5mm} \eta_{a, z, \varepsilon}:=\frac{\gamma_{a,z}(t) \omega_{a,\varepsilon}}{\alpha_{a, z,  \varepsilon}},
\end{equation}
and we define 
\begin{equation}
\label{definition_perturbed_basis}
    \mathcal{W}(\mathcal{P}_{W, B, \varepsilon}(U_z({F_{a}}))):=\Bigg\{ \eta_{a, z, \varepsilon}=\frac{\gamma_{a,z}(t) \omega_{a,\varepsilon}}{\alpha_{a, z,  \varepsilon}} : \omega_{a,\varepsilon} \in \mathcal{H}(\mathcal{P}_{W, B, \varepsilon}(U_z({F_{a}}))) \Bigg\}
\end{equation} 
where the forms $\eta_{a, z, \varepsilon}$ each have unit $L^2$ norm and are supported on $U({F_{a,z}})$ (see Definition \ref{definition_tangent_cone_truncations}). In the case where $z=1$, we drop the subscript $z=1$ and denote the  corresponding forms $\alpha_{a, \varepsilon}, \eta_{a, \varepsilon}$, and the cutoff function $\gamma_a$. 
We can extend the forms from a small fundamental neighbourhood $U(F_a)$ to $X$ by $0$ away from their supports.
We have the Witten deformed Dirac type operator $D_{\varepsilon}=P_{\varepsilon}+P^*_{\varepsilon}$, whose square is the Witten deformed Laplace type operator $\Delta_{\varepsilon}$.

Let $E_{\varepsilon}$ be the vector space generated by the set 
\[ E_\varepsilon:=\text{span} \bigg\{\mathcal{W}(\mathcal{P}_{W, B, \varepsilon}(U(F_{a}))) : a \in \mathcal{I} \bigg\} \]
corresponding to all the harmonic sections $\omega_{a,\varepsilon}$ as above; $E_{\varepsilon}$ is a subspace of $\mathcal{D}_{W}(D_\varepsilon)$ since each $\eta_{a, \varepsilon}$ has finite length and compact support. Since $E_{\varepsilon}$ is finite dimensional (in particular closed), there exists an orthogonal splitting 
\begin{equation}
    \mathcal{D}_{W}(D_\varepsilon)=E_{\varepsilon} \oplus E_{\varepsilon}^{\perp}.
\end{equation}
where $E_{\varepsilon}^{\perp}$ is the orthogonal complement of $E_{\varepsilon}$ in $\mathcal{D}_W(D_\varepsilon) \subseteq L^2\Omega(X;E)$ with respect to the norm of $L^2\Omega(X;E)$. Denote by $\Pi_{\varepsilon}, \Pi_{\varepsilon}^{\perp}$ the orthogonal projection maps from $L^2\Omega(X;E)$ to $E_{\varepsilon}, E_{\varepsilon}^{\perp}$, respectively. From here on we shall drop the notation of $W$ denoting the choice of domain, as all proceeding statements and estimates will hold independent of any choice of domain for $D_\varepsilon$.
\begin{remark}[Convention]
\label{Remark_drop_notation_W_convention_1}
    In the proof of the main result in this subsection, we will fix a choice of global mezzo perversity and Morse-Bott function, and since the choices of domains are fixed we will drop the $W$ in the notation in this subsection.
\end{remark}

We split the deformed Witten operator by the projections as follows.
\begin{equation}
\label{equation_fourfold_operators}
D_{\varepsilon, 1} =\Pi_{\varepsilon} D_{\varepsilon} \Pi_{\varepsilon}, \hspace{3mm}
D_{\varepsilon, 2} =\Pi_{\varepsilon} D_{\varepsilon} \Pi_{\varepsilon}^{\perp}, \hspace{3mm} D_{\varepsilon, 3} =\Pi_{\varepsilon}^{\perp} D_{\varepsilon} \Pi_{\varepsilon}, \hspace{3mm} D_{\varepsilon, 4} =\Pi_{\varepsilon}^{\perp} D_{\varepsilon} \Pi_{\varepsilon}^{\perp}.
\end{equation}
In the following proposition and its proof, the norms and inner products are those for the $L^2$ forms (unless otherwise specified) corresponding to the Hilbert space of the complex $\mathcal{P}_{\varepsilon}(X)$, and the inner products are the same for all $\varepsilon \geq 0$. We observe that quantities such as $\eta_{a,\varepsilon}$ and $\gamma_a(t)$ are only supported in the fundamental neighbourhoods corresponding to the critical points $a$, where the $L^2$ inner products for the local complexes $\mathcal{P}_{B,\varepsilon}(U_z(F_{a}))$ match the global inner product. In the proof, we will only need the complexes for $z=1$ and $z=\infty$, the latter being used only in the proof of the third statement of the proposition.

\begin{proposition}
\label{proposition_Zhangs_Morse_inequalities_intermediate_estimates}
In the setting described above, we have the following estimates.
\begin{enumerate}
    \item There exist a constant $\varepsilon_{0}>0$ such that for any $\varepsilon>\varepsilon_{0}$ and for any $s \in \mathcal{D}(D_{\varepsilon})$, $$\Vert D_{\varepsilon, 1}s\Vert  \leq \frac{\|s\|}{2\varepsilon}.$$ 
    \item There exists a constant $\varepsilon_{1}>0$ such that for any $s \in E_{\varepsilon}^{\perp} \cap \mathcal{D}(D_{\varepsilon}), s^{\prime} \in E_{\varepsilon}$, and $\varepsilon>\varepsilon_{1}$,
$$
\begin{aligned}
\left\|D_{\varepsilon, 2} s\right\| & \leq \frac{\|s\|}{2\varepsilon} \\
\left\|D_{\varepsilon, 3} s^{\prime}\right\| & \leq \frac{\left\|s^{\prime}\right\|}{2\varepsilon}
\end{aligned}
$$
    
    \item  There exist constants $\varepsilon_{2}>0$ and $C_0>0$ such that for any $s \in E_{\varepsilon}^{\perp} \cap \mathcal{D}(D_{\varepsilon})$ and $\varepsilon>\varepsilon_{2}$,
$$
\left\|D_{\varepsilon} s\right\| \geq C_0 \|s\|
$$
\end{enumerate}
\end{proposition}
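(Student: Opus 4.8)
The plan is to follow the strategy of \cite{Zhanglectures} (Propositions 4.8--4.9) in its Morse--Bott form, adapted to the Hilbert complexes $\mathcal{P}_{W,\varepsilon}(X)$, using the two spectral inputs already in hand: the global lower bound away from $\mathrm{crit}(h)$ (Proposition~\ref{Propostion_growth_estimate_witten_deformed}) and the local spectral gap near each $F_a$ (Proposition~\ref{Proposition_model_spectral_gap_modified_general}). The common engine for (1) and (2) is the single estimate $\sum_{a\in\mathcal{I}}\|D_\varepsilon\,\eta_{a,\varepsilon}\|^2\le C\varepsilon^{N}e^{-\varepsilon}$ as $\varepsilon\to\infty$. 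To prove it, note that $D_\varepsilon\omega_{a,\varepsilon}=0$ since $\omega_{a,\varepsilon}$ is harmonic for $\mathcal{P}_{W,B,\varepsilon}(U(F_a))$, and that $[D_\varepsilon,f]=cl(df)$ for $f\in C^\infty_\Phi(X)$ (the zeroth order term $\varepsilon\,\widehat{cl}(dh)$ commutes with multiplication by $f$), so $D_\varepsilon\eta_{a,\varepsilon}=\alpha_{a,\varepsilon}^{-1}\,cl(d\gamma_{a}(t))\,\omega_{a,\varepsilon}$. Since $d\gamma_a(t)$ is supported in $\{1/2\le t\le 3/4\}$, and there $\omega_{a,\varepsilon}=\omega_a e^{-\varepsilon t}$ satisfies $|\omega_{a,\varepsilon}|\le e^{-\varepsilon/2}|\omega_a|$ with $\|\omega_a\|$ fixed, the numerator is $O(e^{-\varepsilon/2})$; while $\alpha_{a,\varepsilon}=\|\gamma_a(t)\omega_{a,\varepsilon}\|\ge\|\omega_{a,\varepsilon}\|_{L^2(\{t<1/2\})}$ decays only polynomially in $\varepsilon^{-1}$, since integrating the Gaussian $e^{-2\varepsilon t}$ against the (conic) volume density near $F_a$ produces a negative power of $\varepsilon$. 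Finiteness of $\mathcal{I}$ gives the claim.

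Granting this, (1) follows at once: picking a basis of $E_\varepsilon$ built from the $\eta_{a,\varepsilon}$ (orthonormal across distinct $a$ by disjoint supports, and within a fixed $a$ with Gram matrix differing from the identity by exponentially small terms, so a harmless Gram--Schmidt produces a genuine orthonormal basis), one gets $\|D_\varepsilon\Pi_\varepsilon s\|^2\le\|\Pi_\varepsilon s\|^2\sum_a\|D_\varepsilon\eta_{a,\varepsilon}\|^2\le\frac{1}{4\varepsilon^2}\|s\|^2$ for $\varepsilon$ large, whence $\|D_{\varepsilon,1}s\|=\|\Pi_\varepsilon D_\varepsilon\Pi_\varepsilon s\|\le\|D_\varepsilon\Pi_\varepsilon s\|\le\frac{\|s\|}{2\varepsilon}$. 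For (2), observe $D_{\varepsilon,2}s=\Pi_\varepsilon D_\varepsilon s$ when $s\in E_\varepsilon^\perp\cap\mathcal{D}(D_\varepsilon)$; since $\eta_{a,\varepsilon}\in\mathcal{D}(D_\varepsilon)$ is smooth and compactly supported, self-adjointness of $D_\varepsilon$ gives $\langle D_\varepsilon s,\eta_{a,\varepsilon}\rangle=\langle s,D_\varepsilon\eta_{a,\varepsilon}\rangle$, so $\|D_{\varepsilon,2}s\|^2=\sum_a|\langle s,D_\varepsilon\eta_{a,\varepsilon}\rangle|^2\le\|s\|^2\sum_a\|D_\varepsilon\eta_{a,\varepsilon}\|^2\le\frac{1}{4\varepsilon^2}\|s\|^2$; similarly $D_{\varepsilon,3}s'=\Pi_\varepsilon^\perp D_\varepsilon s'$ for $s'\in E_\varepsilon$ with $\|D_\varepsilon s'\|^2\le\|s'\|^2\sum_a\|D_\varepsilon\eta_{a,\varepsilon}\|^2\le\frac{1}{4\varepsilon^2}\|s'\|^2$.

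For (3) I would run an IMS-type localization. Choose a partition of unity $\{\phi_a\}_{a\in\mathcal{I}}\cup\{\phi_0\}$ with $\sum_a\phi_a^2+\phi_0^2\equiv1$, each $\phi_a\in C^\infty_c$ supported in the interior of $U(F_a)$ and $\equiv1$ near $F_a$, $\phi_0$ supported in $X\setminus\bigcup_a U(F_a)$, and all transition regions contained in a set $\{|dh|\ge c_0\}$ with $c_0>0$ (near $F_a$ one has $|dh|^2\sim 4t$, so this just means transitioning at $t$ bounded away from $0$; away from $\mathrm{crit}(h)$, $|dh|$ is bounded below by compactness). The standard operator identity $D_\varepsilon^2=\sum_j\phi_j D_\varepsilon^2\phi_j-\sum_j|d\phi_j|^2$ (valid because $D_\varepsilon^2-D^2$ is of order $\le1$, so the double commutator of $\phi_j$ with $D_\varepsilon^2$ equals that with $D^2$, namely $-2|d\phi_j|^2$) yields
\[
\|D_\varepsilon s\|^2=\sum_{a\in\mathcal{I}}\|D_\varepsilon(\phi_a s)\|^2+\|D_\varepsilon(\phi_0 s)\|^2-\sum_j\big\||d\phi_j|\,s\big\|^2 .
\]
Proposition~\ref{Propostion_growth_estimate_witten_deformed} bounds $\|D_\varepsilon(\phi_0 s)\|^2\ge C^2\varepsilon\|\phi_0 s\|^2$. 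For each $a$, extend $\phi_a s$ by zero to the infinite fundamental neighbourhood $U_\infty(F_a)$ (legitimate: $\phi_a s$ is compactly supported in the interior of $U(F_a)$, vanishes near the metric boundary, and multiplication by $\phi_a$ preserves the ideal boundary conditions, so it lies in the local domain), decompose $\phi_a s=h_a+u_a$ with $h_a$ in the span of the local harmonic forms $\{\omega_{a,\varepsilon}\}$ and $u_a$ orthogonal, and apply Proposition~\ref{Proposition_model_spectral_gap_modified_general} with $z=\infty$: $\|D_\varepsilon(\phi_a s)\|^2=\|D_\varepsilon u_a\|^2\ge\min(C_1,C_2\varepsilon)\|u_a\|^2\ge C_1\big(\|\phi_a s\|^2-\|h_a\|^2\big)$ for $\varepsilon\ge C_1/C_2$. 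The almost-orthogonality $\|h_a\|\le\delta(\varepsilon)\|s\|$ with $\delta(\varepsilon)\to0$ superpolynomially comes from $s\perp E_\varepsilon$ --- hence $\langle s,\gamma_a(t)\omega_{a,\varepsilon}\rangle=0$ --- together with the exponential smallness of $\omega_{a,\varepsilon}$ on the region $\{t\gtrsim1/2\}$ where $\phi_a-\gamma_a$ lives (and where the $L^2$ tail of $\omega_{a,\varepsilon}$ beyond $U(F_a)$ sits), weighed against its polynomially small total mass. Finally, since $\mathrm{supp}(d\phi_j)\subset\{|dh|\ge c_0\}$, the Bochner identity of Proposition~\ref{Proposition_stratified_Morse_Laplace_structure} in the form $\varepsilon^2\langle s,|dh|^2 s\rangle\le\|D_\varepsilon s\|^2+\varepsilon|K|\,\|s\|^2$ gives $\sum_j\||d\phi_j|s\|^2\le\frac{C_3}{c_0^2}\langle s,|dh|^2 s\rangle\le\frac{C_3}{c_0^2\varepsilon^2}\|D_\varepsilon s\|^2+\frac{C_3|K|}{c_0^2\varepsilon}\|s\|^2$. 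Combining the three contributions and using $\sum_a\|\phi_a s\|^2+\|\phi_0 s\|^2=\|s\|^2$, one obtains $(1+o(1))\|D_\varepsilon s\|^2\ge\big(C_1-o(1)\big)\|s\|^2$ as $\varepsilon\to\infty$, so (3) holds with, e.g., $C_0=\sqrt{C_1/2}$ for $\varepsilon\ge\varepsilon_2$.

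The main obstacle is this last gluing in (3): the local gap $C_1$ from Proposition~\ref{Proposition_model_spectral_gap_modified_general} is a fixed constant --- it is the spectral gap of the normal cohomology complex $\mathcal{R}_{W,B,\varepsilon}$ and does not grow with $\varepsilon$ --- so the term $-\sum_j\||d\phi_j|s\|^2$ in the IMS identity cannot be discarded by a crude bound $C_3\|s\|^2$; it must be absorbed, which is exactly why one arranges the cutoff transitions to sit where $|dh|$ is bounded below and then uses the Bochner identity to trade $\||d\phi_j|s\|^2$ for $o(1)\|D_\varepsilon s\|^2+o(1)\|s\|^2$. The remaining technical care is (i) checking that $\phi_a s$, extended by zero, really lies in the domain of the local Laplace-type operator on $U_\infty(F_a)$ to which Proposition~\ref{Proposition_model_spectral_gap_modified_general} applies, and (ii) the polynomial lower bounds on $\alpha_{a,\varepsilon}$ and on $\|\omega_{a,\varepsilon}\|_{L^2}$ near $F_a$, both of which reduce to integrating a Gaussian against a conic volume form and are routine.
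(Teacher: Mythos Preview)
Your proofs of (1) and (2) coincide with the paper's: both identify $D_\varepsilon\eta_{a,\varepsilon}=\alpha_{a,\varepsilon}^{-1}cl(d\gamma_a)\omega_{a,\varepsilon}$, observe that this is supported in $\{1/2\le t\le 3/4\}$ where $e^{-\varepsilon t}$ is exponentially small against the at-worst-polynomial decay of $\alpha_{a,\varepsilon}$, and then pass to $D_{\varepsilon,2}$ via self-adjointness. No differences here.

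Your approach to (3) is genuinely different from the paper's. The paper does \emph{not} use an IMS localization of $s$; instead it splits $D_\varepsilon s$ (not $s$) by the single cutoff $\widetilde\gamma$, writing $\|D_\varepsilon s\|\ge\tfrac12\|(1-\widetilde\gamma)D_\varepsilon s\|+\tfrac12\|\widetilde\gamma D_\varepsilon s\|$. For the first piece it moves the cutoff past $D_\varepsilon$ and applies Proposition~\ref{Propostion_growth_estimate_witten_deformed}; for the second, it introduces a second projector $\Pi'_\varepsilon$ onto the harmonic space of the \emph{infinite} model $U_\infty(F_a)$, shows $\|\Pi'_\varepsilon s\|^2\le C/\varepsilon\,\|s\|^2$ (your $h_a$ estimate in different clothing), and then argues separately for ``medium'' and ``large'' eigensections of Proposition~\ref{Proposition_model_spectral_gap_modified_general}: on medium eigensections the commutator $[\widetilde\gamma,D]$ is killed by the exponential decay of the sections in the transition region, while on large eigensections it is beaten by the $\sqrt\varepsilon$ growth. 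Your route instead runs the quadratic-form IMS identity for $D_\varepsilon^2$, and your key device --- absorbing the localization error $\sum_j\||d\phi_j|s\|^2$ via the Bochner formula and the fact that $\mathrm{supp}(d\phi_j)\subset\{|dh|\ge c_0\}$ --- is not in the paper. This buys you a cleaner argument that avoids the medium/large case split. Two small points: (i) as written your $\phi_0$ cannot be supported in $X\setminus\bigcup_a U(F_a)$ and still form a partition of unity with the $\phi_a$ supported compactly in $U(F_a)$; you need $\phi_0$ supported outside slightly \emph{smaller} neighbourhoods, which is harmless since the proof of Proposition~\ref{Propostion_growth_estimate_witten_deformed} only uses a lower bound on $|dh|$; (ii) the Bochner inequality $\varepsilon^2\langle s,|dh|^2 s\rangle\le\|D_\varepsilon s\|^2+\varepsilon|K|\|s\|^2$ for $s\in\mathcal D(D_\varepsilon)$ follows by density from Proposition~\ref{Proposition_stratified_Morse_Laplace_structure} (this is exactly how Proposition~\ref{Propostion_growth_estimate_witten_deformed} is proved), so no domain obstacle there.
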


\begin{remark}
We remark that the first Sobolev space appearing in the statement of Proposition 5.6 of \cite{Zhanglectures} in the smooth setting is replaced by the domain $\mathcal{D}(D_\varepsilon)$ in this generalization to our singular setting. We observe that on non-Witt spaces different choices of domains for $P_\varepsilon$ have different local cohomology groups, and the estimates hold only for sections in the domain $\mathcal{D}_W(D_\varepsilon)$ notated as $\mathcal{D}(D_\varepsilon)$ as per our convention (see Remark \ref{Remark_drop_notation_W_convention_1}).
\end{remark}

\begin{proof}
\textbf{\textit{Outline:}} We adapt the proof of Proposition 5.6 of \cite{Zhanglectures}. The notation is slightly different (in particular the Witten deformation parameter is taken to be $T$ in that article) but it is easy to follow that proof, which is structured after that in \cite{wu1998equivariant}, and variants of which were given in \cite{jayasinghe2023l2,jayasinghe2024holomorphic}.
The same proofs in the smooth case more or less go through after replacing the model solutions in the smooth proofs with the forms in $\mathcal{W}(\mathcal{P}_{B, \varepsilon}(U(F_{a})))$. We can pick an orthonormal basis $\widehat{W_{\varepsilon,a}}$ for the vector space generated by those forms. 

\textbf{\textit{proof of 1:}} For any $s \in \mathcal{D}(D_\varepsilon)$ the projection $\Pi_{\varepsilon} s$ can be written
\begin{equation}
\label{Projection_first_witten_deform}    
\Pi_{\varepsilon} s=\sum_{a \in \mathcal{I}} \sum_{\eta \in \widehat{W_{\varepsilon,a}}} \left\langle \eta, s\right\rangle_{L^2} \eta
\end{equation}
where each $\eta$ can be written as a linear combination of forms $\eta_{a,\varepsilon}$ as defined in equation \eqref{definition_perturbed_basis}. We have that
\begin{equation}
    \Vert \Pi_{\varepsilon} D_\varepsilon \eta_{a, \varepsilon}\Vert  \leq e^{-C_0 \varepsilon}
\end{equation}
for some $C_0>0$ and $\varepsilon$ large enough using the following argument.

Since $D_{\varepsilon}=D+ \varepsilon \widehat{cl}(dh)$, observe that for a smooth function $v$ we have by the Leibniz rule
\begin{equation}
\label{equation_Leibniz_for_Witten_deformation}
    D_{\varepsilon}( v \omega_{a,\varepsilon}) = D(v \omega_{a,\varepsilon})+ \varepsilon \widehat{cl}(dh)( v \omega_{a,\varepsilon}) = cl(dv) \omega_{a,\varepsilon} + v \big ( (D \omega_{a,\varepsilon}) + \varepsilon \widehat{cl}(dh)(\omega_{a,\varepsilon}) \big ) = cl(dv) \omega_{a,\varepsilon}
\end{equation}
since $D_{\varepsilon} \omega_{a,\varepsilon}=0$. Since $\text{supp}( d\gamma_a(t)) \subseteq \{1/2 \leq t=x^2+r^2 \leq 3/4\}$ (recall that $\gamma_a=\gamma_{a,z=1}$), 
each $D_{\varepsilon}\eta_{a,\varepsilon}$ is compactly supported in $\{1/2 \leq t=x^2+r^2 \leq 3/4\}$. Then for $\varepsilon$ large enough
\begin{equation} 
\label{inequality_useful_for_Morse_proof_234}
\left\langle D_{\varepsilon} \eta_{a, \varepsilon}, \eta_{a, \varepsilon} \right\rangle_{L^2} =  \left\langle cl(d\gamma_a(t)) \omega_{a, \varepsilon}/ {\alpha_{a,\varepsilon}}, \eta_{a, \varepsilon} \right\rangle_{L^2} \leq e^{-C_0 \varepsilon}
\end{equation}
for some large enough positive constant $C_0$ 
since $cl(d\gamma_a(t)) \omega_{a, \varepsilon}/ {\alpha_{a,\varepsilon}}$ is supported away from $t \leq 1/2$ and $\omega_{a, \varepsilon}=\omega_a e^{-t\varepsilon}$.

Since each $\eta$ has unit norm, the Cauchy Schwartz inequality shows that $|\left\langle \eta, s\right\rangle_{L^2}| \leq \Vert s\Vert $.
Since the forms $\eta$ in the basis $\widehat{W_{\varepsilon,a}}$ for a given critical point set $\widehat{F_a}$ are orthogonal,
and since the supports of the forms in $\widehat{W_{\varepsilon,a}}$ for distinct connected components of $\text{crit}(h)$ have no intersection, using equation \eqref{Projection_first_witten_deform} we see that
\begin{equation}
    \Vert \Pi_{\varepsilon} D_\varepsilon \Pi_{\varepsilon}s\Vert  \leq e^{-C_0 \varepsilon}\Vert s\Vert 
\end{equation}
for large enough $\varepsilon$ in order to compensate for the finite sum of terms as well as ensuring \eqref{inequality_useful_for_Morse_proof_234}. The estimate of the Proposition statement follows since the exponential decays faster than the required decay.

\textbf{\textit{proof of 2:}} In the smooth setting, this follows from Proposition 4.11 of \cite{Zhanglectures} which we will redo in this case, changing the details where necessary.
Since $D_\varepsilon$ is self-adjoint, it is easy to see that $D_{\varepsilon,2}$ is the adjoint of $D_{\varepsilon,3}$, and it suffices to prove the first estimate of the two.

Since each $\eta_{a, \varepsilon}$ has support in $U(F_a)$, one deduces that for any $s \in E_{\varepsilon}^{\perp} \cap \mathcal{D}(D_{\varepsilon})$,

$$
\begin{aligned}
D_{\varepsilon, 2} s & =\Pi_{\varepsilon} D_{\varepsilon} \Pi_{\varepsilon}^{\perp} s=\Pi_{\varepsilon} D_{\varepsilon} s \\
& =\sum_{a \in \mathcal{I}} \sum_{\eta \in \widehat{W_{\varepsilon,a}}} \left\langle\eta, D_{\varepsilon} s\right\rangle_{L^2} \eta \\
& =\sum_{a \in \mathcal{I}} \sum_{\eta \in \widehat{W_{\varepsilon,a}}} \eta \int_{U_a} \left\langle\eta, D_{\varepsilon} s\right\rangle_{\Lambda^\bullet} \operatorname{dvol}_{U_a} \\
& =\sum_{a \in \mathcal{I}} \sum_{ \eta \in \widehat{W_{\varepsilon,a}}
} \eta \int_{U_a} \left\langle D_{\varepsilon} \eta,  s\right\rangle_{\Lambda^\bullet}  \operatorname{dvol}_{U_a}
\end{aligned}
$$
where we have denoted the inner product on the wedge exterior bundle by the angle brackets with subscript $\Lambda^{\bullet}$. Since each $\eta$ can be written as a linear combination of forms $\eta_{a,\varepsilon}$ as defined in equation \eqref{definition_perturbed_basis}, it suffices to estimate the integral over $U_a$ with integrand $\left\langle D_{\varepsilon} \eta_{a,\varepsilon},  s\right\rangle_{\Lambda^{\bullet}}$. This can be expanded as
\begin{equation}
    \left\langle D_{\varepsilon} \frac{\gamma_a(t) \omega_{a} e^{-(x^2+r^2) \varepsilon}}{{\alpha_{a, \varepsilon}}},  s\right\rangle_{L^2} =\left\langle \frac{cl(d\gamma_a(t)) \omega_{a} e^{-(x^2+r^2) \varepsilon}}{{\alpha_{a, \varepsilon}}},  s\right\rangle_{L^2} 
\end{equation}
restricted to each $U_a$, where we have used the argument in \eqref{equation_Leibniz_for_Witten_deformation}, similar to equation (4.40) of \cite{Zhanglectures}. We know that $d\gamma_a$ is only supported on the set $\{1/2 \leq t=(x^2+r^2) \leq 3/4\}$ in each $U(F_a)$, we see that the desired inequality follows from 
Cauchy-Schwarz.

\textbf{\textit{proof of 3:}} 
In the case of stratified Morse functions in the essentially self-adjoint case, the third statement in Proposition 5.6 of in \cite{Zhanglectures} (where the proof is that of Proposition 4.12 of that article) was generalized in Proposition 6.28 of \cite{jayasinghe2023l2}. In the flat stratified Morse-Bott case we need important modifications.

Let $\widetilde{\gamma} \in C_{\Phi}^{\infty}(X)$ be defined such that restricted to each $U(F_{a})$ for critical points $a$, $\widetilde{\gamma}(t)=\gamma_a(t)$, and that $\widetilde{\gamma} \big|_{X \backslash \bigcup_{a \in \mathcal{I}} U(F_{a})}=0$.
For any  $s \in E_{\varepsilon}^{\perp} \cap \mathcal{D}(D_{\varepsilon})$ we see that $\widetilde{\gamma} s \in E_{\varepsilon}^{\perp} \cap \mathcal{D}(D_{\varepsilon})$.
We will show that 
\begin{equation}
\label{equation_to_prove_third_estimate}
    \left\|D_{\varepsilon} s\right\| \geq \frac{1}{2} \big( \left\|(1-\widetilde{\gamma}) D_{\varepsilon} s\right\|+\left\|\widetilde{\gamma} D_{\varepsilon} s\right\| \big) > C_0 \Vert s\Vert 
\end{equation}
and we split the last estimate into two parts.

\textbf{\textit{part 1:}} 
We first estimate the term $\Vert (1-\widetilde{\gamma}) D_{\varepsilon} s\Vert $ from below using global arguments.
Observe that 
$$
\begin{gathered}
\Vert (1-\widetilde{\gamma}) D_{\varepsilon} s\Vert =\Vert D_{\varepsilon} (1-\widetilde{\gamma})s + [D, \widetilde{\gamma}] s\Vert  \\ \geq \frac{\sqrt{\varepsilon}}{2}C_{8}\|(1-\widetilde{\gamma}) s\|-C_{9}\|s\|, 
\end{gathered}
$$
where we use the fact that $\widetilde{\gamma}$ is supported away from the critical point set which allows us to use the growth estimate in Proposition \ref{Propostion_growth_estimate_witten_deformed}, and also the fact that $[D, \widetilde{\gamma}]=cl(d \widetilde{\gamma})$ is a zeroth order operator bounded by a constant $C_9$. Then for large enough $\varepsilon$ we see that we have a much better lower bound that what we need for proving \ref{equation_to_prove_third_estimate} given a suitable bound for the other term there.

\textbf{\textit{part 2:}} 
Now we consider the term $\left\|\widetilde{\gamma} D_{\varepsilon} s\right\|$ and estimate it using results about the eigenvalues on exact cones.
Observe that since $\gamma_{a,\infty}$ is identically $1$ on the non-compact `neighbourhood' $U_{\infty}(F_a)$, the forms in $\mathcal{W}(\mathcal{P}_{B,\varepsilon}(U_\infty(F_{a})))$ are elements of $\mathcal{H}(\mathcal{P}_{B,\varepsilon}(U_\infty(F_{a})))$ with unit norm.
For any section $s$ verifying $\operatorname{supp}(s) \in \bigcup_{a \in \mathcal{I}} U(F_{a})$, the projection $\Pi'_{\varepsilon} s$ is defined by
\begin{equation}
\label{Projection_first_witten_deform_non_compact}    
\Pi'_{\varepsilon} s:=\sum_{a \in \mathcal{I}} \sum_{\omega_{a,\varepsilon} \in \widehat{W'_{\varepsilon,a}}} \left\langle \omega_{a,\varepsilon}, s\right\rangle_{L^2(U_\infty(F_{a}))} \omega_{a,\varepsilon}
\end{equation}
where $\widehat{W'_{\varepsilon,a}}$ is an orthonormal basis for the vector space $\mathcal{H}(\mathcal{P}_{B,\varepsilon}(U_\infty(F_{a})))$.
Then $\Pi'_{\varepsilon}$ is an orthogonal projection
\begin{equation}
    \Pi'_{\varepsilon} :\bigoplus_{a \in \mathcal{I}} L^2\Omega^\cdot(U_\infty(F_{a});E) \rightarrow \mathcal{H}(\mathcal{P}_{B,\varepsilon}(U_\infty(F_{a}))).
\end{equation}

Note that if $\omega_{a,\varepsilon} \in \mathcal{H}(\mathcal{P}_{B,\varepsilon}(U_\infty(F_{a})))$, then $\gamma_a(t) \omega_{a,\varepsilon} \in E_{\varepsilon}$ and so if $s \in E^{\perp}_{\varepsilon}$ we have
\begin{equation}
\label{equation_for_projection_modified_21}
\Pi'_{\varepsilon} s=\sum_{a \in \mathcal{I}} \sum_{\omega_{a,\varepsilon} \in \widehat{W'_{\varepsilon,a}}}
\omega_{a,\varepsilon} 
\left\langle(1-\gamma_a(t)) \omega_{a,\varepsilon}, s \right\rangle_{L^2\Omega^\cdot(U_\infty(F_{a});E)}.
\end{equation}
Using an argument similar to that in the first point of the proposition, we can show that
\begin{equation}
    \Vert (1-\gamma_a(t))\omega_{a,\varepsilon}\Vert ^2_{L^2\Omega(U(F_{a,\infty});E)} \leq \frac{C}{\varepsilon}
\end{equation}
for some constant $C$ since $\gamma_a$ equals to 1 near each $a$. Then equation \eqref{equation_for_projection_modified_21} shows that there exists $C_{5}>0$ such that when $\varepsilon \geq 1$,
\begin{equation}
\label{equation_4.46_of_Zhang}
\left\|\Pi'_{\varepsilon} s \right\|^{2} \leq \frac{C_{5}}{{\varepsilon}}\|s\|^{2}.
\end{equation}

Since $\widetilde{\gamma}(s-\Pi'_{\varepsilon}s)$ is in $E_{\varepsilon}^{\perp}$, by equation \eqref{equation_4.46_of_Zhang}, we can now use the spectral results given by Proposition \ref{Proposition_model_spectral_gap_modified_general} to prove the necessary estimates for $\left\|\widetilde{\gamma} D_{\varepsilon} s\right\|$ for $s \in E_{\varepsilon}^{\perp} \cap \mathcal{D}(D_{\varepsilon})$ (note that $D_\varepsilon \Pi'_{\varepsilon} s=0$).
We first see that 
\begin{equation}
\label{equation_quantity_to_be_estimated_77}
    \Vert \widetilde{\gamma} D_\varepsilon s'\Vert =\Vert D_\varepsilon (\widetilde{\gamma} s') + [\widetilde{\gamma},D]s'\Vert 
\end{equation}
which holds for $s'=(s-\Pi'_{\varepsilon}s)$. We estimate the quantity in \eqref{equation_quantity_to_be_estimated_77} for the medium and the large eigenvalue eigensections given in Proposition \ref{Proposition_model_spectral_gap_modified_general} separately.

For the medium eigenvalue eigensections, we see that $D_\varepsilon (\widetilde{\gamma} s') \geq  C_0 \Vert s'\Vert $ for large enough $\varepsilon$ where we can take $C_0$ to be the minimum of all smallest non-zero eigenvalues of the horizontal ($\Delta^H$) Laplacians on each neighbourhood $U_{\infty}(F_a)$ over all connected components of the critical point set (which are finite by assumption). We also see that $[\widetilde{\gamma},D]=-cl(d\widetilde{\gamma})$, which at every $U(F_a)$ has support which is a bounded distance away from the critical point set and since the medium eigenvalue eigenforms decay exponentially away from the critical point set, we have an upper bound
\begin{equation}
    \|[\widetilde{\gamma},D]s^\prime\| =\| cl(d\widetilde{\gamma}) s^\prime\| < \frac{C_1}{\sqrt{\varepsilon}}\|s^\prime\|.
\end{equation}
Thus we see that for medium eigenvalue eigensections,
\begin{equation}
    \Vert \widetilde{\gamma} D_\varepsilon s'\Vert  \geq C_0\Vert s'\Vert  - \frac{C_1}{\sqrt{\varepsilon}}\Vert s'\Vert .
\end{equation}
We now treat the large eigenvalue eigensections $s$. In this case, again we have that $[\widetilde{\gamma},D]=-cl(d\widetilde{\gamma})$ and since $cl(d\widetilde{\gamma})$ is a bounded zero-th order operator, we have that $\Vert cl(d\widetilde{\gamma}) s'\Vert  \leq C_9 \Vert s'\Vert $ for some positive constant $C_9$ where $s'=(s-\Pi'_{\varepsilon}s)$.
Since these are large eigenvalue eigensections $\Vert D_\varepsilon s'\Vert  \geq C_{11} \sqrt{\varepsilon}\Vert  s'\Vert $, and we have 
\begin{equation}
    \Vert \widetilde{\gamma} D_\varepsilon s'\Vert  \geq C_{11} \sqrt{\varepsilon}\Vert  s'\Vert  - C_9 \Vert s'\Vert .
\end{equation}
Thus for large values of $\varepsilon$, we have the necessary estimate, completing the proof of the Proposition.
\end{proof}

For any $c>0$, denote by {$E_{\varepsilon,c}$} the direct sum of the eigenspaces of $D_{\varepsilon}$ with eigenvalues lying in $[-c, c]$, which is a finite dimensional subspace of $L^2\Omega(X;E)$. Let $\Pi_{\varepsilon,c}$ be the orthogonal projection from $L^2\Omega(X;E)$ to $E_{\varepsilon,c}$. The following is a generalization of Lemma 5.8 of \cite{Zhanglectures}, with modifications for the flat stratified Morse-Bott case.

\begin{lemma}
\label{Lemma_inequality_spectral_for_Witten_deformation}
There exist $C_1>0$, $\varepsilon_3>0$ such that for any $\varepsilon>\varepsilon_3$, and any $\sigma \in E_{\varepsilon}$, 
\begin{equation}
    \left\|\Pi_{\varepsilon,c} \sigma-\sigma\right\| \leq \frac{C_1}{\varepsilon}\|\sigma\|.
\end{equation}
\end{lemma}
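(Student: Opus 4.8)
The goal is to show that the subspace $E_\varepsilon$ of approximate harmonic forms is close, in the graph norm, to the genuine small-eigenvalue space $E_{\varepsilon,c}$ of the deformed Dirac operator $D_\varepsilon$. The standard route, following Lemma 5.8 of \cite{Zhanglectures} (and Lemma 6.29 of \cite{jayasinghe2023l2}), is via a resolvent/contour-integral estimate. First I would fix $\sigma \in E_\varepsilon$ and write the difference $\Pi_{\varepsilon,c}\sigma - \sigma$ using the Cauchy integral formula for the spectral projection: since $D_\varepsilon$ is self-adjoint with domain $\mathcal D(D_\varepsilon)$, for a counterclockwise circle $\delta$ of radius, say, $c$ centered at $0$ in $\mathbb C$ (with $c$ chosen smaller than the constant $C_0$ from Proposition \ref{proposition_Zhangs_Morse_inequalities_intermediate_estimates}(3)),
\begin{equation}
    \Pi_{\varepsilon,c}\sigma - \sigma = \frac{1}{2\pi i}\oint_{\delta}\left( (\lambda - D_\varepsilon)^{-1} - \lambda^{-1}\right)\sigma\, d\lambda = \frac{1}{2\pi i}\oint_{\delta} (\lambda - D_\varepsilon)^{-1}\, \frac{D_\varepsilon \sigma}{\lambda}\, d\lambda ,
\end{equation}
using that $\delta$ encircles $0$ but, by the estimate in part (3), no other spectrum of $D_\varepsilon$ near $0$ once $\varepsilon$ is large. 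Hence it suffices to bound $\|(\lambda - D_\varepsilon)^{-1} D_\varepsilon \sigma\|$ uniformly for $\lambda \in \delta$ and multiply by the length $2\pi c$ of the contour.

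**Key steps.** The two ingredients are: (i) a resolvent bound $\|(\lambda - D_\varepsilon)^{-1}\| \leq C$ for $\lambda \in \delta$ and $\varepsilon$ large, and (ii) a bound on $\|D_\varepsilon \sigma\|$ for $\sigma \in E_\varepsilon$. For (ii): decompose $D_\varepsilon \sigma = D_{\varepsilon,1}\sigma + D_{\varepsilon,3}\sigma = \Pi_\varepsilon D_\varepsilon \Pi_\varepsilon \sigma + \Pi_\varepsilon^\perp D_\varepsilon \Pi_\varepsilon \sigma$ (since $\Pi_\varepsilon \sigma = \sigma$), and apply parts (1) and (2) of Proposition \ref{proposition_Zhangs_Morse_inequalities_intermediate_estimates} to get $\|D_\varepsilon \sigma\| \leq \|D_{\varepsilon,1}\sigma\| + \|D_{\varepsilon,3}\sigma\| \leq \frac{1}{\varepsilon}\|\sigma\|$ for $\varepsilon$ large. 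For (i): write $\sigma = \Pi_{\varepsilon,c}\sigma + (1-\Pi_{\varepsilon,c})\sigma$ and handle the two pieces by spectral calculus — on the range of $1-\Pi_{\varepsilon,c}$, the operator $\lambda - D_\varepsilon$ is invertible with norm bounded by $\mathrm{dist}(\lambda, \mathrm{Spec}(D_\varepsilon)\setminus[-c,c])^{-1}$, which by part (3) (the spectral gap $\mathrm{Spec}(D_\varepsilon) \cap ((-C_0,C_0)\setminus\{0\})$ restricted to $E_\varepsilon^\perp$, combined with the fact that on $E_\varepsilon$ the operator $D_{\varepsilon,1}$ is small) is bounded below by a fixed constant once $c$ is chosen appropriately and $\varepsilon$ is large; on the (finite-dimensional) range of $\Pi_{\varepsilon,c}$ one needs only that $|\lambda| = c$ stays away from the eigenvalues in $[-c,c]$, which is arranged by taking the contour radius strictly between the small eigenvalues (clustered near $0$, size $O(1/\varepsilon)$) and $C_0$. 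Combining, $\|\Pi_{\varepsilon,c}\sigma - \sigma\| \leq \frac{2\pi c}{2\pi} \cdot C \cdot \frac{1}{c} \cdot \frac{1}{\varepsilon}\|\sigma\| = \frac{C_1}{\varepsilon}\|\sigma\|$, which is the claim.

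**Main obstacle.** The delicate point is the uniform-in-$\varepsilon$ resolvent bound on the circle $\delta$: one must certify that as $\varepsilon \to \infty$ the only spectrum of $D_\varepsilon$ inside the disc of radius $C_0$ is the cluster of small eigenvalues near $0$ that come from $E_\varepsilon$ — i.e. that there is a genuine spectral gap separating $O(1/\varepsilon)$ eigenvalues from the rest. In the smooth Morse case this is exactly the content of Proposition 4.12 / 5.6(3) of \cite{Zhanglectures}; here it is Proposition \ref{proposition_Zhangs_Morse_inequalities_intermediate_estimates}(3) together with Proposition \ref{Proposition_model_spectral_gap_modified_general} for the flat stratified Morse–Bott model, where the presence of both horizontal (fixed, $O(1)$) and large ($O(\varepsilon)$) eigenvalues on the normal fibers must be reconciled with the choice of $c$; in particular $c$ must be chosen below $C_1$, the smallest nonzero horizontal eigenvalue over all components $U_\infty(F_a)$, as well as below $C_0$. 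Once $c$ is fixed in this window and $\varepsilon_3$ chosen so that $\frac{1}{\varepsilon}$ is smaller than, say, $c/4$, the contour argument closes with no further subtlety, and the constant $C_1$ depends only on $c$ and the constants $\varepsilon_0,\varepsilon_1,\varepsilon_2,C_0$ from the preceding proposition.
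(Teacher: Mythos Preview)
Your overall contour-integral strategy and the estimate for $\|D_\varepsilon\sigma\|$ via $D_{\varepsilon,1}+D_{\varepsilon,3}$ match the paper exactly. The one real gap is in your step (i), the uniform resolvent bound on the circle $|\lambda|=c$. You attempt to obtain it via the \emph{spectral} decomposition $\Pi_{\varepsilon,c}\oplus(1-\Pi_{\varepsilon,c})$, arguing that on $\mathrm{ran}(1-\Pi_{\varepsilon,c})$ the distance from $\lambda$ to $\mathrm{Spec}(D_\varepsilon)\setminus[-c,c]$ is bounded below ``by part (3)'', and that on $\mathrm{ran}\,\Pi_{\varepsilon,c}$ the eigenvalues are already $O(1/\varepsilon)$. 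But neither of these is available at this stage: part (3) of Proposition~\ref{proposition_Zhangs_Morse_inequalities_intermediate_estimates} is an estimate on $E_\varepsilon^\perp$, which is \emph{not} a spectral subspace of $D_\varepsilon$, so it does not directly locate $\mathrm{Spec}(D_\varepsilon)$; and the assertion that the eigenvalues in $[-c,c]$ cluster at $0$ with size $O(1/\varepsilon)$ is essentially a consequence of the lemma you are proving (together with Proposition~\ref{Proposition_small_eig_estimate_and_dimension}), so invoking it here is circular.

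The paper avoids this by bounding $\|(\lambda-D_\varepsilon)s\|$ from below \emph{directly}, for arbitrary $s\in\mathcal D(D_\varepsilon)$, using the $E_\varepsilon\oplus E_\varepsilon^\perp$ splitting and all three parts of Proposition~\ref{proposition_Zhangs_Morse_inequalities_intermediate_estimates}: writing $(\lambda-D_\varepsilon)s$ in its $\Pi_\varepsilon$ and $\Pi_\varepsilon^\perp$ components and applying the estimates on $D_{\varepsilon,1},D_{\varepsilon,2},D_{\varepsilon,3},D_{\varepsilon,4}$ gives $\|(\lambda-D_\varepsilon)s\|\geq C_2\|s\|$ for $|\lambda|=c<C_0$ and $\varepsilon$ large, with no prior knowledge of the spectrum needed. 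Once you replace your spectral-calculus argument for (i) with this direct lower bound, the rest of your proof goes through verbatim and coincides with the paper's.
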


\begin{proof}
Let $\delta=\{\lambda \in \mathbf{C}:|\lambda|=c\}$ be the counter-clockwise oriented circle. By Proposition \ref{proposition_Zhangs_Morse_inequalities_intermediate_estimates}, one deduces that for any $\lambda \in \delta, \varepsilon \geq \varepsilon_{0}+\varepsilon_{1}+\varepsilon_{2}$ and $s \in \mathcal{D_\varepsilon}$, there exists constants $C_0 > c_0 >0 $ such that
\begin{equation}   
\begin{gathered}
\left\|\left(\lambda-D_{\varepsilon}\right) s\right\| \geq \frac{1}{2}\left\|\lambda \Pi_{\varepsilon} s -D_{\varepsilon, 1}\Pi_{\varepsilon}s   -D_{\varepsilon, 2} \Pi_{\varepsilon} s\right\| +\frac{1}{2}\left\|\lambda \Pi_{\varepsilon}^{\perp} s-D_{\varepsilon, 3} \Pi_{\varepsilon}^{\perp} s-D_{\varepsilon, 4} \Pi_{\varepsilon}^{\perp} s\right\| \\
\geq \frac{1}{2}\left(\left(c_0-\frac{1}{\varepsilon}\right)\left\|\Pi_{\varepsilon} s\right\|+\left(C_0-c_0-\frac{1}{\varepsilon}\right)\left\|\Pi_{\varepsilon}^{\perp} s\right\|\right) 
\end{gathered}
\end{equation}
where we pick $c_0$ such that $c_0 < C_0$ where $C_0$ is the constant given in the third estimate of Proposition \ref{proposition_Zhangs_Morse_inequalities_intermediate_estimates}. 
This shows that there exist $\varepsilon_{4}>\varepsilon_{0}+\varepsilon_{1}+\varepsilon_{2}$ and $C_{2}>0$ such that for any $\varepsilon \geq \varepsilon_{4}$ and $s \in \mathcal{D_\varepsilon}$,
\begin{equation}  
\label{equation_(5.27)_Zhang}
\left\|\left(\lambda-D_{\varepsilon}\right) s\right\| \geq C_{2}\|s\|.
\end{equation}
Thus, for any $\varepsilon  \geq \varepsilon _{4}$ and $\lambda \in \delta$,
\begin{equation}  
\lambda-D_{\varepsilon }:  \mathcal{D_\varepsilon} \rightarrow L^2(X)
\end{equation}
is invertible and the resolvent $\left(\lambda-D_{\varepsilon}\right)^{-1}$ is well-defined.
By the spectral theorem 
one has
\begin{equation}
\label{equation_spectral_projector_countour_integral}
\Pi_{\varepsilon,c} \sigma-\sigma=\frac{1}{2 \pi \sqrt{-1}} \int_{\delta}\left(\left(\lambda-D_{\varepsilon}\right)^{-1}-\lambda^{-1}\right) \sigma d \lambda.
\end{equation}
Now one verifies directly
that for any $\sigma \in E_\varepsilon$
\begin{equation}
\left(\left(\lambda-D_{\varepsilon}\right)^{-1}-\lambda^{-1}\right) \sigma=\lambda^{-1}\left(\lambda-D_{\varepsilon}\right)^{-1} (D_{\varepsilon, 1}+D_{\varepsilon, 3}) \sigma .
\end{equation}
From Proposition \ref{proposition_Zhangs_Morse_inequalities_intermediate_estimates} and \eqref{equation_(5.27)_Zhang}, one then deduces that for any $\varepsilon \geq \varepsilon_{4}$ and $\sigma \in E_{\varepsilon}$,
\begin{equation}
\label{equation_estimate_finale}
\left\|\left(\lambda-D_{\varepsilon}\right)^{-1} (D_{\varepsilon, 1}+D_{\varepsilon, 3}) \sigma\right\| \leq C_{2}^{-1}\left\|(D_{\varepsilon, 1}+D_{\varepsilon, 3}) \sigma\right\| \leq \frac{1}{C_{2} \varepsilon}\|\sigma\|
\end{equation}
From \eqref{equation_spectral_projector_countour_integral}-\eqref{equation_estimate_finale}, we get the estimate in the statement of the Lemma, finishing the proof.
\end{proof}

In Remark 5.9 of \cite{Zhanglectures}, Zhang explains that one can work out an analog of the proof with real coefficients, whereas the proof above implicitly uses the fact that we work in the category of complex coefficients.
The following is a generalization of Proposition 5.5 of \cite{Zhanglectures}.

\begin{proposition}
\label{Proposition_small_eig_estimate_and_dimension}
There exists a small enough constant $c>0$ and there exists $\varepsilon_0>0$ such that when $\varepsilon>\varepsilon_0$, the number of eigenvalues in $[0,c]$ of $\Delta_{\varepsilon,k}$, the Laplace-type operator acting on forms of degree $k$ of the complex $\mathcal{P}_{\varepsilon}(X)$, is equal to $\sum_{a \in \mathcal{I}} \dim \mathcal{H}^k(\mathcal{P}_{B,\varepsilon}(U(F_{a})))$.
\end{proposition}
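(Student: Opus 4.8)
The plan is to follow the standard two-sided dimension count that appears in Zhang's book (Proposition 5.5 of \cite{Zhanglectures}) and in \cite{jayasinghe2023l2}, now carried out with the domain $\mathcal{D}(D_\varepsilon)=\mathcal{D}_W(D_\varepsilon)$ in place of the first Sobolev space, and with the model harmonic forms replaced by the elements of $\mathcal{W}(\mathcal{P}_{B,\varepsilon}(U(F_a)))$. Write $N:=\dim E_\varepsilon=\sum_{a\in\mathcal{I}}\dim\mathcal{H}(\mathcal{P}_{B,\varepsilon}(U(F_a)))$ and $N_k:=\sum_{a\in\mathcal{I}}\dim\mathcal{H}^k(\mathcal{P}_{B,\varepsilon}(U(F_a)))$, so $N=\sum_k N_k$. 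The quantity we must compute is $\dim E_{\varepsilon,c}$ and its graded pieces $\dim E_{\varepsilon,c,k}$.

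First I would prove the inequality $\dim E_{\varepsilon,c}\ge N$ for large $\varepsilon$. By Lemma \ref{Lemma_inequality_spectral_for_Witten_deformation}, for $\varepsilon>\varepsilon_3$ the spectral projection $\Pi_{\varepsilon,c}$ restricted to $E_\varepsilon$ satisfies $\|\Pi_{\varepsilon,c}\sigma-\sigma\|\le (C_1/\varepsilon)\|\sigma\|$, so for $\varepsilon$ large enough that $C_1/\varepsilon<1$ the map $\Pi_{\varepsilon,c}\colon E_\varepsilon\to E_{\varepsilon,c}$ is injective; hence $\dim E_{\varepsilon,c}\ge N$. Moreover this argument is compatible with the $\mathbb{Z}$-grading: each $\eta_{a,\varepsilon}$ built from a harmonic representative of $\mathcal{H}^k(\mathcal{P}_{B,\varepsilon}(U(F_a)))$ has form-degree $k$ (the cutoff $\gamma_a(t)$ is a function, so multiplication by it preserves degree), and $D_\varepsilon$ and hence $\Delta_\varepsilon$ preserve degree, so $E_\varepsilon$ and $E_{\varepsilon,c}$ are graded and $\Pi_{\varepsilon,c}$ respects the grading; therefore $\dim E_{\varepsilon,c,k}\ge N_k$ for each $k$.

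Next I would prove the reverse inequality $\dim E_{\varepsilon,c}\le N$, which is the real content. Suppose $s\in E_{\varepsilon,c}$, so $\|D_\varepsilon s\|\le c\|s\|$ (using $\Delta_\varepsilon=D_\varepsilon^2$ and the spectral theorem). Decompose $s=\Pi_\varepsilon s+\Pi_\varepsilon^\perp s$ with $s^\perp:=\Pi_\varepsilon^\perp s\in E_\varepsilon^\perp\cap\mathcal{D}(D_\varepsilon)$. Writing $D_\varepsilon s=D_{\varepsilon,1}\Pi_\varepsilon s+D_{\varepsilon,2}s^\perp+D_{\varepsilon,3}\Pi_\varepsilon s+D_{\varepsilon,4}s^\perp$ and using that $D_{\varepsilon,1}s,D_{\varepsilon,3}s$ lie in $E_\varepsilon,E_\varepsilon^\perp$ respectively while $D_{\varepsilon,2}s^\perp\in E_\varepsilon$, $D_{\varepsilon,4}s^\perp\in E_\varepsilon^\perp$, one projects to get
\begin{equation}
\|\Pi_\varepsilon^\perp D_\varepsilon s\| = \|D_{\varepsilon,3}\Pi_\varepsilon s + D_{\varepsilon,4}s^\perp\| \ge \|D_{\varepsilon,4}s^\perp\| - \|D_{\varepsilon,3}\Pi_\varepsilon s\|.
\end{equation}
By the third estimate of Proposition \ref{proposition_Zhangs_Morse_inequalities_intermediate_estimates}, $\|D_{\varepsilon,4}s^\perp\|=\|\Pi_\varepsilon^\perp D_\varepsilon s^\perp\|$; but more directly one uses $\|D_\varepsilon s^\perp\|\ge C_0\|s^\perp\|$ together with the second estimate $\|\Pi_\varepsilon D_\varepsilon s^\perp\|=\|D_{\varepsilon,2}s^\perp\|\le\|s^\perp\|/2\varepsilon$, which forces $\|D_{\varepsilon,4}s^\perp\|\ge (C_0-1/2\varepsilon)\|s^\perp\|$. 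Combining with $\|D_{\varepsilon,3}\Pi_\varepsilon s\|\le\|\Pi_\varepsilon s\|/2\varepsilon$ and $\|D_\varepsilon s\|\le c\|s\|$ yields $c\|s\|\ge (C_0-1/2\varepsilon)\|s^\perp\|-(1/2\varepsilon)\|\Pi_\varepsilon s\|$, so for $c$ small and $\varepsilon$ large, $\|s^\perp\|\le C'c\|s\|\le \tfrac12\|s\|$, whence $\|\Pi_\varepsilon s\|\ge\tfrac12\|s\|$. This shows $\Pi_\varepsilon\colon E_{\varepsilon,c}\to E_\varepsilon$ is injective, so $\dim E_{\varepsilon,c}\le N$. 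Again $\Pi_\varepsilon$ is graded, so $\dim E_{\varepsilon,c,k}\le N_k$.

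Combining the two inequalities gives $\dim E_{\varepsilon,c}=N$ and $\dim E_{\varepsilon,c,k}=N_k$ for each $k$, provided $c$ is chosen small enough and $\varepsilon$ large enough; and since $E_{\varepsilon,c}$ is precisely the span of eigensections of $\Delta_\varepsilon$ with eigenvalue in $[0,c]$ (as $\Delta_\varepsilon=D_\varepsilon^2$ has eigenvalue $\mu^2\le c$ iff $D_\varepsilon$ has eigenvalue $\mu$ with $|\mu|\le\sqrt c$, after renaming the constant), the graded count $\dim E_{\varepsilon,c,k}=\sum_{a\in\mathcal{I}}\dim\mathcal{H}^k(\mathcal{P}_{B,\varepsilon}(U(F_a)))$ is exactly the assertion of the Proposition. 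The main obstacle is ensuring that all the estimates of Proposition \ref{proposition_Zhangs_Morse_inequalities_intermediate_estimates} and Lemma \ref{Lemma_inequality_spectral_for_Witten_deformation} can be applied to arbitrary $s\in\mathcal{D}(D_\varepsilon)$ — in particular that $E_\varepsilon\subset\mathcal{D}(D_\varepsilon)$ (established already, since each $\eta_{a,\varepsilon}$ has compact support in a fundamental neighbourhood and finite $L^2$ norm, so the global ideal boundary conditions of $W$ are trivially met) and that the orthogonal splitting $\mathcal{D}(D_\varepsilon)=E_\varepsilon\oplus E_\varepsilon^\perp$ interacts correctly with the unbounded self-adjoint operator $D_\varepsilon$; this is where the convention of Remark \ref{Remark_drop_notation_W_convention_1} — fixing one mezzo-perversity throughout — and the isomorphism of local and global inner products on the supports of the $\eta_{a,\varepsilon}$ do the work, so that the proof is formally identical to the smooth case of \cite{Zhanglectures} once the model solutions are substituted.
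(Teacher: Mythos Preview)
Your proposal is correct and is precisely the argument the paper refers to (the formal two-sided dimension count of Proposition~5.5 in \cite{Zhanglectures}), carried out in detail with the model harmonic forms $\eta_{a,\varepsilon}$ and the domain $\mathcal{D}_W(D_\varepsilon)$ in place of the Sobolev space. One minor slip: $D_\varepsilon=d_\varepsilon+d_\varepsilon^*$ does \emph{not} preserve form degree, but $\Delta_\varepsilon$ does, and since $E_{\varepsilon,c}$ equals the span of the $\Delta_\varepsilon$-eigenspaces with eigenvalue in $[0,c^2]$, your claim that $\Pi_{\varepsilon,c}$ respects the grading still holds.
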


\begin{proof}
This follows from the same formal arguments as those in the proof of Proposition 5.5 of \cite{Zhanglectures}. Similar generalizations to the singular setting are given in Proposition 6.31 of \cite{jayasinghe2023l2} and Proposition 5.10 of \cite{jayasinghe2024holomorphic}.
\end{proof}

These results show that the small eigenvalue eigensections form a complex of the Witten deformed Laplacian encodes information about the critical point structures. This is also known as the Witten instanton complex.

\begin{theorem}[Witten instanton complex]
\label{theorem_small_eig_complex}
For any integer $0 \leq k \leq n$, let $
\mathrm{K}_{\varepsilon, k}^{[0, c]} \subset L^2\Omega^k(X;E)$ denote the vector space generated by the eigenspaces of $\Delta_{\varepsilon}$ associated with eigenvalues in $[0, c]$. There exists a small enough $c>0$ and there exists $\varepsilon_0>0$ such that when $\varepsilon>\varepsilon_0$ this has the same dimension as $\oplus_{a \in \mathcal{I}} \mathcal{H}^k(\mathcal{P}_{W,B,\varepsilon}(U(F_a))$, and together form a finite dimensional subcomplex of $\mathcal{P}_{\varepsilon}(X)$ :
\begin{equation}
    \label{small_eigenvalue_complex}
\left(\mathrm{K}_{\varepsilon, k}^{[0, c]}, P_{\varepsilon}\right): 0 \longrightarrow \mathrm{K}_{\varepsilon, 0}^{[0, c]} \stackrel{P_{\varepsilon}}{\longrightarrow} \mathrm{K}_{\varepsilon, 1}^{[0, c]} \stackrel{P_{\varepsilon}}{\longrightarrow} \cdots \stackrel{P_{\varepsilon}}{\longrightarrow} \mathrm{K}_{\varepsilon, n}^{[0, c]} \longrightarrow 0
\end{equation}
which is quasi-isomorphic to $\mathcal{P}_{\varepsilon}(X)$.
\end{theorem}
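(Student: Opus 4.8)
The plan is to assemble the three preceding technical results---Proposition \ref{proposition_Zhangs_Morse_inequalities_intermediate_estimates}, Lemma \ref{Lemma_inequality_spectral_for_Witten_deformation}, and Proposition \ref{Proposition_small_eig_estimate_and_dimension}---into the assertion that $\mathrm{K}_{\varepsilon,\bullet}^{[0,c]}$ is a subcomplex and that the inclusion into $\mathcal{P}_{\varepsilon}(X)$ is a quasi-isomorphism. First I would fix $c>0$ and $\varepsilon_0>0$ as in Proposition \ref{Proposition_small_eig_estimate_and_dimension}, so that for $\varepsilon>\varepsilon_0$ the total dimension of $\mathrm{K}_{\varepsilon,k}^{[0,c]}$ equals $\sum_{a\in\mathcal{I}}\dim\mathcal{H}^k(\mathcal{P}_{W,B,\varepsilon}(U(F_a)))$; shrinking $c$ and enlarging $\varepsilon_0$ if necessary I would also arrange that $c$ is strictly smaller than $C_0$ from part (3) of Proposition \ref{proposition_Zhangs_Morse_inequalities_intermediate_estimates} and than the spectral gap constants $C_1,C_2\varepsilon$ from Proposition \ref{Proposition_model_spectral_gap_modified_general}. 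The point of this choice is that $c$ sits strictly below the bottom of the rest of the spectrum of $\Delta_\varepsilon$, so $\mathrm{K}_{\varepsilon,\bullet}^{[0,c]}$ is spectrally isolated.

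The key step is that $\mathrm{K}_{\varepsilon,\bullet}^{[0,c]}$ is $P_\varepsilon$-invariant, hence a subcomplex. This is the standard Hodge-theoretic argument: since $D_\varepsilon = P_\varepsilon + P_\varepsilon^*$ is self-adjoint and $\Delta_\varepsilon = D_\varepsilon^2$ commutes with both $P_\varepsilon$ and $P_\varepsilon^*$ on the appropriate domains (this is where one invokes that $\mathcal{D}_{W}(D_\varepsilon^2)$ is the natural domain from \eqref{Laplacian_D_type} and that $P_\varepsilon$ preserves eigenspaces of $\Delta_\varepsilon$ because $\Delta_\varepsilon P_\varepsilon v = P_\varepsilon \Delta_\varepsilon v$ for $v$ in the Laplacian domain), each $\mathrm{K}_{\varepsilon,k}^{[0,c]}$ is spanned by finitely many eigensections $v$ with $\Delta_\varepsilon v = \lambda v$, $\lambda\in[0,c]$, and $P_\varepsilon v$ again satisfies $\Delta_\varepsilon(P_\varepsilon v)=\lambda(P_\varepsilon v)$, so $P_\varepsilon v\in\mathrm{K}_{\varepsilon,k+1}^{[0,c]}$. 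Together with $P_\varepsilon^2=0$ this gives the finite-dimensional complex \eqref{small_eigenvalue_complex}, and I would note it is $\mathcal{P}_{W,inst}(X)$.

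Next, to prove the inclusion $\iota:(\mathrm{K}_{\varepsilon,\bullet}^{[0,c]},P_\varepsilon)\hookrightarrow\mathcal{P}_{\varepsilon}(X)$ is a quasi-isomorphism, I would use the spectral decomposition of $\Delta_\varepsilon$ relative to the gap at $c$: write $L^2\Omega^\bullet(X;E) = \mathrm{K}_{\varepsilon,\bullet}^{[0,c]}\oplus \mathrm{K}_{\varepsilon,\bullet}^{(c,\infty)}$, an orthogonal, $P_\varepsilon$- and $P_\varepsilon^*$-invariant splitting, with the complex $(\mathrm{K}_{\varepsilon,\bullet}^{(c,\infty)},P_\varepsilon)$ having invertible Laplacian (all eigenvalues $>c>0$), hence acyclic by the strong Kodaira decomposition of Proposition \ref{Kodaira_decomposition} together with Proposition \ref{Fredholm_is_closed_range}. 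Since cohomology is additive over this direct sum of complexes, $\mathcal{H}^k(\mathcal{P}_{\varepsilon}(X))\cong\mathcal{H}^k(\mathrm{K}_{\varepsilon,\bullet}^{[0,c]})$, and one checks the isomorphism is induced by $\iota$. I would then complete the picture by recording that the cohomology of the instanton complex matches the local data: because $\mathcal{P}_{W,\varepsilon}(X)$ is isomorphic to $\mathcal{P}_W(X)$ via multiplication by $e^{-\varepsilon h}$ (Corollary 2.19 of \cite{bru1992hilbert}), the quasi-isomorphism lets one read off $\dim\mathcal{H}^k(\mathcal{P}_{W,inst}(X))$, and comparing with Proposition \ref{Proposition_small_eig_estimate_and_dimension}, which counts $\dim\mathrm{K}_{\varepsilon,k}^{[0,c]}$ itself, one sees the graded dimensions of the instanton complex agree with $\bigoplus_{a\in\mathcal{I}}\mathcal{H}^k(\mathcal{P}_{W,B,\varepsilon}(U(F_a)))$.

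The main obstacle I anticipate is not the formal homological algebra but making sure the domain subtleties do not break the commutation argument: on a non-Witt space $\Delta_\varepsilon$ is only self-adjoint on the domain $\mathcal{D}_W(\Delta_\varepsilon)$ attached to the chosen mezzo-perversity, and one must verify that $P_\varepsilon$ maps $\mathcal{D}_W(\Delta_{\varepsilon,k})$ into $\mathcal{D}_W(\Delta_{\varepsilon,k+1})$ and that the eigensections genuinely lie in the domain of $P_\varepsilon^*$ as well (so that the orthogonal complement $\mathrm{K}_{\varepsilon,\bullet}^{(c,\infty)}$ really is a subcomplex with the adjoint structure intact). This is precisely the content of the abstract Hilbert-complex formalism in Subsection \ref{subsection_abstract_hilbert_complexes}---equations \eqref{Laplacian_P_type}--\eqref{Laplacian_D_type} and Propositions \ref{Kodaira_decomposition}--\ref{Kernel_equals_cohomology}---so the proof is really a matter of citing these carefully rather than reproving anything; the hard analytic work (the spectral gap and the estimates on $D_{\varepsilon,i}$) has already been done in Proposition \ref{proposition_Zhangs_Morse_inequalities_intermediate_estimates} and Lemma \ref{Lemma_inequality_spectral_for_Witten_deformation}.
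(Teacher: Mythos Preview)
Your proposal is correct and follows essentially the same approach as the paper: cite Proposition \ref{Proposition_small_eig_estimate_and_dimension} for the dimension count, use the commutation $P_\varepsilon\Delta_\varepsilon=\Delta_\varepsilon P_\varepsilon=P_\varepsilon P_\varepsilon^* P_\varepsilon$ to show the small-eigenvalue spaces are $P_\varepsilon$-invariant, and then invoke the Kodaira decomposition for the quasi-isomorphism. The paper's proof is terser---it simply states that the Kodaira decomposition of $\mathcal{P}_\varepsilon(X)$ restricts to the finite-dimensional subcomplex---whereas you spell out the acyclicity of the complementary piece $\mathrm{K}_{\varepsilon,\bullet}^{(c,\infty)}$ explicitly; both amount to the same argument.
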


\begin{proof}    
This follows from Proposition \ref{Proposition_small_eig_estimate_and_dimension} once one shows that the small eigenvalue eigensections form a complex.
Since
$$
P_{\varepsilon} \Delta_{\varepsilon}=\Delta_{\varepsilon} P_{\varepsilon} =P_{\varepsilon} P_{\varepsilon}^* P_{\varepsilon} \text{   and    }
P_{\varepsilon}^* \Delta_{\varepsilon}=\Delta_{\varepsilon} P_{\varepsilon}^* =P_{\varepsilon}^* P_{\varepsilon} P_{\varepsilon}^*
$$
one sees that $P_{\varepsilon}$ (resp. $P_{\varepsilon}^*$ ) maps each $\mathrm{K}_{\varepsilon, k}^{[0, c]}$ to $\mathrm{K}_{\varepsilon, k+1}^{[0, c]}$ (resp. $\mathrm{K}_{\varepsilon, k-1}^{[0, c]}$ ). 
The Kodaira decomposition of $\mathcal{P}_{\varepsilon}(X)$ restricts to this finite dimensional complex $\left(\mathrm{K}_{\varepsilon, k}^{[0, c]}, P_{\varepsilon}\right)$. 
\end{proof}

The strong form of the Morse inequalities is a corollary which we explore in the next subsection.

\subsection{Morse inequalities}

We prove the following polynomial form of the Strong Morse inequalities.

\begin{theorem}
\label{Theorem_strong_Morse_Bott_de_Rham}
[Strong polynomial Morse inequalities for the de Rham complex]
Let $\widehat{X}$ be a stratified pseudomanifold of dimension $n$ with a wedge metric and a flat stratified Morse-Bott function $h$. Let $E$ be a flat vector bundle on $X$. Let $\mathcal{P}_W(X)=(L^2\Omega(X;E), \mathcal{D}_W(d_E),d_E)$ be the de Rham complex. 
Then there exist non-negative integers $Q_0,..., Q_{n-1}$ such that
\begin{multline}
\label{Morse_Bott_inequality_de_Rham_dimension_cohomology}
    M(\mathcal{P}_W(X),h)(b):=\Big( \sum_{a \in \mathcal{I}}  \sum_{k=0}^n b^k \dim(\mathcal{H}^{k}(\mathcal{P}_{W,B}(U(F_a))) \Big) \\= \sum_{k=0}^n b^k \dim(\mathcal{H}^{k}(\mathcal{P}_W(X))) + (1+b) \sum_{k=0}^{n-1} Q_k b^k.
\end{multline}
\end{theorem}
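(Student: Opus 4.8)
## Proof Proposal

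The plan is to deduce the strong polynomial Morse inequalities as a purely algebraic consequence of the existence of the Witten instanton complex (Theorem \ref{theorem_small_eig_complex}), together with the polynomial supertrace machinery assembled in Subsection \ref{subsection_polynomial_Lefschetz_supertraces}. The key point is that \eqref{Morse_Bott_inequality_de_Rham_dimension_cohomology} has two halves: the first equality is just the computation of $M(\mathcal{P}_W(X),h)(b)$ in terms of local cohomology, and the second is the actual inequality. For the first equality I would invoke Theorem \ref{theorem_small_eig_complex} directly: the Morse polynomial is by Definition \ref{Definition_Morse_and_Poincare_polynomials} the generating function $\sum_k b^k \dim \mathrm{K}_{W,\varepsilon,k}^{[0,c]}$, and Proposition \ref{Proposition_small_eig_estimate_and_dimension} identifies $\dim \mathrm{K}_{W,\varepsilon,k}^{[0,c]}$ with $\sum_{a\in\mathcal{I}} \dim \mathcal{H}^k(\mathcal{P}_{W,B,\varepsilon}(U(F_a)))$, which in turn equals $\sum_a \dim\mathcal{H}^k(\mathcal{P}_{W,B}(U(F_a)))$ since the cohomology of the local complex is invariant under the Witten deformation (the isomorphism given by multiplication by $e^{-\varepsilon h}$, as recorded in Subsection \ref{subsection_Witten_deformed_elliptic}).

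For the second, substantive equality I would apply Theorem \ref{Lefschetz_supertrace} to the finite-dimensional Hilbert complex $\mathcal{P}_{W,inst}(X) = (\mathrm{K}_{W,\varepsilon,k}^{[0,c]}, P_\varepsilon)$. That theorem gives, for every $t>0$,
\begin{equation*}
\mathcal{L}(\mathcal{P}_{W,inst}(X))(b,t) = L(\mathcal{P}_{W,inst}(X))(b) + (1+b)\sum_{k=0}^{n-1} b^k S_k(t),
\end{equation*}
where $S_k(t) = \sum_{\lambda_i} e^{-t\lambda_i}\langle v_{\lambda_i},v_{\lambda_i}\rangle \ge 0$ is a sum over co-exact eigensections. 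Since the instanton complex is finite-dimensional, $\mathcal{L}(\mathcal{P}_{W,inst}(X))(b,0) = \sum_k b^k \dim \mathrm{K}_{W,\varepsilon,k}^{[0,c]} = M(\mathcal{P}_W(X),h)(b)$, while $L(\mathcal{P}_{W,inst}(X))(b) = \sum_k b^k \dim \mathcal{H}^k(\mathcal{P}_{W,inst}(X))$. Because $\mathcal{P}_{W,inst}(X)$ is quasi-isomorphic to $\mathcal{P}_{W,\varepsilon}(X)$ (Theorem \ref{theorem_small_eig_complex}), and the latter has cohomology isomorphic to that of $\mathcal{P}_W(X)$ (via $e^{-\varepsilon h}$, as in Subsection \ref{subsection_Witten_deformed_elliptic}), we get $L(\mathcal{P}_{W,inst}(X))(b) = P(\mathcal{P}_W(X))(b) = \sum_k b^k\dim\mathcal{H}^k(\mathcal{P}_W(X))$. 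Setting $Q_k := \lim_{t\to 0} S_k(t) = S_k(0)$ — which is a non-negative integer since at $t=0$ it counts the dimension of the space of co-exact elements in degree $k$ of the finite-dimensional complex — yields exactly \eqref{Morse_Bott_inequality_de_Rham_dimension_cohomology}. One should note the grading caveat from Remark \ref{Remark_Notation_and_grading_P_vs_Q}: here everything is phrased for the $\mathcal{P}$-complex with form-degree grading, so no reindexing is needed.

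I expect the only real subtlety — the "main obstacle" — to be bookkeeping around the passage between $\varepsilon$-deformed and undeformed cohomology and making sure $Q_k$ is manifestly a non-negative \emph{integer} independent of the auxiliary choices ($c$, $\varepsilon$, the metric). Non-negativity and integrality are immediate from $Q_k = \dim(\text{co-exact part of }\mathrm{K}_{W,\varepsilon,k}^{[0,c]})$, i.e. $Q_k = \dim \mathrm{K}_{W,\varepsilon,k}^{[0,c]} - \dim\ker(\Delta_{\varepsilon,k}|_{\mathrm{K}}) - (\text{exact part})$, which is the usual rank of $P_\varepsilon$ restricted to the instanton complex in degree $k$; independence of $\varepsilon$ and $c$ follows because the ranks of the differentials in a complex quasi-isomorphic to the fixed complex $\mathcal{P}_W(X)$ are determined once one fixes that the instanton complex computes the same cohomology — more precisely, $\sum_{j\le k}(-1)^{k-j} Q_j$ type alternating sums are forced, and the individual $Q_k$ are then pinned down by the standard argument that a finite complex with prescribed Betti numbers and prescribed dimensions has its differential ranks determined. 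I would spell this out by simply identifying $Q_k$ with $\operatorname{rank}(P_\varepsilon: \mathrm{K}_{W,\varepsilon,k}^{[0,c]}\to \mathrm{K}_{W,\varepsilon,k+1}^{[0,c]})$, observing this equals $\dim\operatorname{ran}(P_{\varepsilon,k})$ inside the instanton complex, and noting Theorem \ref{Lefschetz_supertrace} already packages the telescoping identity for us so that no separate induction is needed. The proof is therefore short: cite Theorem \ref{theorem_small_eig_complex}, apply Theorem \ref{Lefschetz_supertrace} with $t\to 0$, and translate cohomology of the instanton complex to cohomology of $\mathcal{P}_W(X)$.
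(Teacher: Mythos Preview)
Your proposal is correct and follows essentially the same approach as the paper: apply Theorem~\ref{Lefschetz_supertrace} to the finite-dimensional instanton complex of Theorem~\ref{theorem_small_eig_complex}, take $t\to 0$, and identify the Poincar\'e polynomial of the instanton complex with that of $\mathcal{P}_W(X)$ via the quasi-isomorphism and the $e^{-\varepsilon h}$-isomorphism, with $Q_k$ given by the dimension of the co-exact part in degree $k$. Your additional discussion of independence of $Q_k$ from $c$ and $\varepsilon$ is not needed for the statement (which only asserts existence), and the paper omits it.
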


The \textit{inequalities} advertised by the theorem's label are those imposed on the coefficients of the polynomial on the left hand side by the fact that $Q_j \in \mathbb{N}_{0}$.

\begin{proof}[Proof of Theorem \ref{Theorem_strong_Morse_Bott_de_Rham}]
To prove the polynomial Morse inequalities, we apply equation \eqref{equation_with_the_b} of Theorem \ref{Lefschetz_supertrace}
\begin{equation}
\label{interesting_step_small_eig_complex}
    \mathcal{L}(\mathcal{P})(b,t)=L(\mathcal{P})(b)+ (1+b) \sum_{k=0}^{n-1} b^k S_k(t)
\end{equation}
to the complex \eqref{small_eigenvalue_complex}. Since this is a complex of finite dimensional Hilbert spaces, we can take $t$ to $0$ to see that the left hand side is exactly the expression
\begin{equation}
    \Big( \sum_{a \in \mathcal{I}}  \sum_{k=0}^n b^k \dim(\mathcal{H}^{k}(\mathcal{P}_{W,B,\varepsilon}(U_{a,z}))) \Big)
\end{equation}
and the right hand side is of the form
\begin{equation}
    \sum_{k=0}^n b^k \dim(\mathcal{H}^{k}(\mathcal{P}_{W,\varepsilon}(X))) + (1+b) \sum_{k=0}^{n-1} Q_k b^k
\end{equation}
where $Q_k$ are non-negative integers, since they correspond to the dimension of the co-exact small eigenvalue eigensections of the deformed Laplace-type operator. Since the cohomology groups of the Witten deformed and the undeformed complexes have the same dimensions for both the local and global complexes we see that this proves equation \eqref{Morse_Bott_inequality_de_Rham_dimension_cohomology}.
\end{proof}

\begin{remark}
\label{Remark_on_adjoint_inequalities_proof}
We briefly explain in simple terms why the relation $b^nM(\mathcal{P}_W(X),h)(b^{-1})=M(\mathcal{Q}_{W^{\perp}}(X),-h)(b)$ holds. The critical points of both $h$ and $-h$ are the same, but the attracting and expanding directions are reversed. However the local complexes corresponding to such a switch are adjoint complexes (see the constructions in Subsection \ref{subsubsubsection_Neumann_boundary_condition}) and thus the Poincar\'e polynomials for the local complexes (see Definition \ref{Definition_Morse_and_Poincare_polynomials}) which are also the local Morse polynomials satisfy a relation which taking sums over the $a \in \mathcal{I}$ gives the needed relation.
\end{remark}

The following corollary generalizes the well known result of Poincar\'e duality for Morse polynomials in the smooth setting, and is a version of Corollary \ref{corollary_Poincare_dual_inequalities_intro_version}. 

\begin{corollary}[Refined inequalities for self-dual mezzo perversities]
\label{corollary_Poincare_dual_inequalities}
In the same setting as Theorem \ref{Theorem_strong_Morse_Bott_de_Rham} 
if the mezzo-perversity $W$ is self-dual, we have that
\begin{equation}
    \label{equation_Poincare_duality_for_polynomials}
    M(\mathcal{P}_W(X),h)(b)=b^nM(\mathcal{P}_W(X),-h)(b^{-1}), 
\end{equation}
and there is a \textbf{refined polynomial Morse inequality}
\begin{equation}
\label{equation_refined_inequalities}
    M_{re}(\mathcal{P}_W(X),h,b)- \sum_{k=0}^n b^k dim(\mathcal{H}^{k}(\mathcal{P}_W(X))) = \sum_{k=0}^{n} \overline{Q}_{k} b^{k}
\end{equation}
where $\overline{Q}_{k}$ are non-negative integers.
\end{corollary}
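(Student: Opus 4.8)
The strategy is to obtain both the Poincar\'e duality relation for Morse polynomials and the refined inequality from the duality machinery already assembled in the excerpt, specialized to the self-dual case. First I would establish \eqref{equation_Poincare_duality_for_polynomials}. By Theorem \ref{Theorem_strong_Morse_Bott_de_Rham} and Definition \ref{Definition_Morse_and_Poincare_polynomials}, $M(\mathcal{P}_W(X),h)(b)=\sum_{a\in\mathcal{I}}P(\mathcal{P}_{W,B}(U(F_a)))(b)$, so it suffices to prove the relation locally, one connected component at a time. Applying the local duality of Proposition \ref{Proposition_duality_complex_conjugation_for_local_Lefschetz_numbers} at $t\to 0$ gives $P(\mathcal{P}_{W,B}(U(F_a)))(b)=b^nP\big((\mathcal{P}_{W^{\perp},B^{\perp}})(U(F_a))_{PD}\big)(b^{-1})$. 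Now I use self-duality: when $W=\star W^{\perp}$, the Hodge star intertwines the Poincar\'e-dual complex with a genuine de Rham complex for the \emph{same} mezzo-perversity, and (as explained in Remark \ref{Remark_on_adjoint_inequalities_proof} and Subsection \ref{subsubsubsection_Neumann_boundary_condition}) passing to $-h$ swaps the stable and unstable normal factors $E_u\leftrightarrow E_s$, hence swaps $N\leftrightarrow D$ boundary conditions, which is exactly the role of $\star$ on the boundary (the Dirichlet-to-Neumann interchange recorded in Subsection \ref{Subsection_Poincare_Dual_complexes}). Chaining these identifications yields $b^nP\big((\mathcal{P}_{W^{\perp},B^{\perp}})(U(F_a))_{PD}\big)(b^{-1})=b^nP(\mathcal{P}_{W,B}(U(F_a)))$ evaluated with $h$ replaced by $-h$ and $b$ by $b^{-1}$; summing over $a$ gives \eqref{equation_Poincare_duality_for_polynomials}.

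\textbf{From duality to the refined inequality.} Write $P(b):=\sum_k b^k\dim\mathcal{H}^k(\mathcal{P}_W(X))$, $M^+(b):=M(\mathcal{P}_W(X),h)(b)$, $M^-(b):=M(\mathcal{P}_W(X),-h)(b)$. Theorem \ref{Theorem_strong_Morse_Bott_de_Rham} applied to both $h$ and $-h$ gives $M^{\pm}(b)=P(b)+(1+b)R^{\pm}(b)$ with $R^{\pm}(b)=\sum_{k=0}^{n-1}Q_k^{\pm}b^k$ having non-negative integer coefficients. The point of Poincar\'e duality \eqref{equation_Poincare_duality_for_polynomials} together with the analogous self-duality of the cohomology polynomial — namely $b^nP(b^{-1})=P(b)$, which holds for self-dual $W$ since the Hodge star is an isomorphism $\mathcal{H}^k(\mathcal{P}_W(X))\cong\mathcal{H}^{n-k}(\mathcal{P}_W(X))$ on a Cheeger space — is that it forces $b^n R^-(b^{-1})=R^+(b)$ after accounting for the degree shift, so that $Q^+_k$ and $Q^-_{n-1-k}$ are related; I would record this symmetry as a short lemma. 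The coefficientwise minimum $M_{re}(b)=\sum_k\min\{M^+_k,M^-_k\}b^k$ then satisfies, degree by degree, $M_{re,k}-P_k=\min\{(R^++bR^+)_k,(R^-+bR^-)_k\}=:\overline{Q}_k\ge 0$, an integer because each $M^{\pm}_k$, $P_k$ is a non-negative integer and $M^{\pm}_k\ge P_k$ (the latter from non-negativity of $Q^{\pm}$). The vanishing of the $(1+b)$ factor in \eqref{equation_refined_inequalities} is the content of the refinement: one checks that $\min\{(1+b)R^+,(1+b)R^-\}$ taken coefficientwise need not be divisible by $(1+b)$, so no such factor can be pulled out, and that is precisely why the refined inequalities carry strictly more information than the plain ones.

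\textbf{Main obstacle.} The genuinely delicate point is the local identification in the first paragraph: one must verify that, for a self-dual mezzo-perversity, the Hodge star operator really does carry the boundary-condition data $B$ for $(h,U(F_a))$ to the data $B^{\perp}=\star B$ for $(-h,U(F_a))$, including on the \emph{stratified} critical point sets of Type II where the normal fibers are themselves products of cones and where $\star$ must interchange $N$ and $D$ on each conic factor while simultaneously respecting the induced mezzo-perversity $W(F_a)$ on $F_a$ itself. The self-duality condition $\star W^{\perp}=W$ is what makes the induced condition $W(F_a)$ on the base also self-dual (compatibility of restriction with $\star$), but one should spell out that the horizontal operator $d^H_E$ of the normal cohomology complex $\mathcal{R}_{W,B}(U(F_a))$ has its Poincar\'e dual equal to the normal cohomology complex of the same $W(F_a)$ with $h\mapsto -h$; this follows from the adjoint/Poincar\'e-dual statements already in Subsection \ref{Subsection_Poincare_Dual_complexes} and Proposition \ref{Proposition_duality_complex_conjugation_for_local_Lefschetz_numbers}, but assembling it cleanly in the Morse-Bott (non-isolated) setting is where the real work lies. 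Once that is in hand, everything else is the elementary polynomial bookkeeping indicated above. I would also remark, following Example \ref{example_breakdown_refined_inequalities}, that the argument genuinely uses $W=\star W^{\perp}$: without it the dual complex computes cohomology for a \emph{different} mezzo-perversity, $b^nP(b^{-1})=P(b)$ fails, and the coefficientwise minimum of $M^+$ and $M^-$ loses its interpretation.
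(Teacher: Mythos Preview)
Your proposal is correct and follows essentially the same approach as the paper's own proof, which is extremely terse: the paper simply invokes Remark~\ref{Remark_on_adjoint_inequalities_proof} (the relation $b^nM(\mathcal{P}_W(X),h)(b^{-1})=M(\mathcal{Q}_{W^{\perp}}(X),-h)(b)$) together with $W=\star W^{\perp}$ to get \eqref{equation_Poincare_duality_for_polynomials}, and then observes that the Poincar\'e polynomial is a common lower bound in both Morse inequalities to get \eqref{equation_refined_inequalities}. Your argument unpacks precisely the local duality content (via Proposition~\ref{Proposition_duality_complex_conjugation_for_local_Lefschetz_numbers} and the Dirichlet--Neumann interchange under $\star$) that the paper leaves implicit in that Remark, and your polynomial bookkeeping for the refined inequality is the explicit version of the paper's one-line ``common terms'' remark.
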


\begin{proof}
Since the relation in Remark \ref{Remark_on_adjoint_inequalities_proof} holds for both $\mathcal{P}_W(X)$ and $\mathcal{P}_{\star W^{\perp}}(X)$, since self-dual mezzo perversities are those for which $W=\star W^{\perp}$, we get equation \ref{equation_Poincare_duality_for_polynomials}.

Then taking the minimal polynomials (see Definition \ref{Definition_refined_Morse_polynomials}) of the Morse polynomials for the complex $\mathcal{P}(X)$ with both $h$ and $-h$, together with the fact that the Poincar\'e polynomial is a set of common terms in both inequalities yields equation \ref{equation_refined_inequalities}.
\end{proof}

\section{Guide for computations}
\label{Section_computational_guide}

In Examples \ref{Example_suspension_torus_Witten_deformation} (c.f. \ref{Example_suspension_over_torus_first}) we computed local and global cohomology groups. In this section we give a brief guide on how to compute local Morse polynomials more generally.

\subsection{Recovering the smooth Morse inequalities}
It is well known that the contribution to the Morse polynomial from a critical point $a$ of a Morse function $h$ is $b^{M_a}$ where $M_a$ is the \textit{Morse index} of $a$. This is simply the number of negative eigenvalues of the Hessian of $h$ at $a$, which is well defined, independent of the coordinate chart chosen to compute this number. The Morse lemma can be used to write $h=x^2-r^2$ in a chart near the critical point, where $x$ and $r$ are radial functions on two discs $\mathbb{D}^{k_1}_x$ and $\mathbb{D}^{k_2}_r$, the product of which is a neighbourhood of the critical point, and it is easy to check that $k_2=M_a$. 
Since the fundamental neighbourhood is simply the product of the two discs, the local cohomology is given by the product of the absolute cohomology of the disc $\mathbb{D}^{k_1}_x$ which is the class generated by the constants in degree $0$, and the relative cohomology of the disc $\mathbb{D}^{k_2}_r$ which is the class generated by the volume form in degree $k_2$.
Thus the contribution $b^{M_a}$ is simply the polynomial trace over the local cohomology.
This volume form can be identified with the orientation of the expanding fibers, and this is the perspective in \cite{bismut1986witten}.

If we consider Morse-Bott functions where the fibrations of the fundamental neighbourhoods $U(F_a)$ are trivial fibrations, then each such neighbourhood decomposes into a product of the base of the fibration ($F_a$) and the normal fiber (where the fibers are all homeomorphic), we can use the K\"unneth formula to compute the local cohomology. In this case the Morse-Bott lemma shows that the normal fibers can be understood similar to those in the Morse case (where the base is a point). Thus the computation boils down to figuring out the cohomology of the base.
More generally for any Morse-Bott function, the local cohomology is the cohomology of the base $F_a$ with coefficients twisted by the cohomology of the complex on the normal fibers, which in the smooth setting is simply the orientation of the expanding fibers.

If there are \textit{nice} assumptions on the critical point sets, such as having an isometric action of the fundamental group of the base on the fibers, or if there is a smooth and proper action of a Lie group on the fibers, then the local cohomology groups can be computed more easily (see, e.g., \cite{BanaglFlatfiber2013} for more details).

The following example can be used to see that the refined Morse polynomial can have an error polynomial with no $(1+b)$ factor.

\begin{example}
\label{Example_two_horned_torus}
Consider a height function $h$ on a torus which has Morse polynomial $M(\mathcal{P}(X),h)(b)=1+3b+2b^2$ (this is easy to construct). The Morse polynomial for $-h$ is $M(\mathcal{P}(X),-h)(b)=2+3b+b^2$, and it is easy to check that $b^2 M\mathcal{P}(X),(h)(b^{-1})=M(\mathcal{P}(X),-h)(b)$.
We also see that the refined Morse polynomial is $1+3b^1+b^1$, which is equal to the Poincar\'e polynomial up to the error polynomial which is simply $b^1$.
\end{example}

\subsection{Case of singular critical points}

We can compute the local cohomology groups at isolated critical points and the cohomology of fibers corresponding to the complexes we consider using the following proposition.

\begin{proposition}  
\label{proposition_cohomology_of_cone}
Consider the complex $\mathcal{P}_{W,N}(U(F_a))=(L^2\Omega^{\cdot}(U(F_a);\mathbb{C}), \mathcal{D}_{W,N}(d),d)$ where $U(F_a)=C_x(Z)$, where $Z$ is a stratified space equipped with a mezzo-perversity 9in the notation of Subsection \ref{subsection_depth_2}) $W=(W^1,...W^j)$ where $C(Z)$ has depth $j$.
Then $Z$ can be equipped with the mezzo-perversity $V=(W^1,...,W^{j-1})$,
which determines a complex on the link $\mathcal{P}_{V}(Z)=(L^2\Omega^{\cdot}(Z;\mathbb{C}), \mathcal{D}_{V}(P_Z),P_Z)$, where $P_Z$ is the de Rham operator on $Z$.
Then the cohomology of the cone with generalized Neumann boundary conditions is 
\begin{equation}
\mathcal{H}^k(\mathcal{P}_{W,N}(U(F_a)))=
\begin{cases}
      0, & \text{for}\ k > l/2 \\  
      W^j, & \text{for}\ k = l/2 \\
      \mathcal{H}^k(\mathcal{P}_{V}(Z)), & \text{for}\ k < l/2
\end{cases}
\end{equation}
where $l$ is the dimension of the link $l$ (where $W^j=\{0\}$ is $l$ if odd), and for the generalized Dirichlet boundary conditions we have
\begin{equation}
\mathcal{H}^k(\mathcal{P}_{W,D}(U(F_a)))=
\begin{cases}
      dx \wedge x^{2(k-1)-l}\mathcal{H}^{k-1}(\mathcal{P}_{V}(Z)), & \text{for}\ k > l/2+1 \\  
      dx \wedge {W^j}^{\perp}, & \text{for}\ k = l/2+1 \\
      0, & \text{for}\ k < l/2+1
\end{cases}
\end{equation}
where $dx \wedge W^j$ and represents the forms $dx \wedge \beta$ for $\beta \in W^j$ ((similarly $dx \wedge x^{2(k-1)-l} \mathcal{H}^k(\mathcal{P}_{V}(Z))$)). 
\end{proposition}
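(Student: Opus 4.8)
The plan is to compute the cohomology of the cone complex $\mathcal{P}_{W,N}(U(F_a))$ with $U(F_a) = C_x(Z)$ by a direct separation-of-variables argument on the cone, reducing to a family of ODEs in the radial variable $x$ indexed by the eigenforms of the vertical de Rham operator on the link $Z$. First I would recall the standard splitting of forms on a cone: any $L^2$ form $\omega$ on $C_x(Z)$ decomposes as $\omega = \omega_1 + dx \wedge \omega_2$ where $\omega_i$ are $x$-dependent families of forms on $Z$, and the de Rham operator $d_{C(Z)}$ acts in block form in terms of $\partial_x$, the de Rham operator $d_Z$, and the degree operator (the $x^2 g_Z$ scaling produces factors of powers of $x$). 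This is the computation carried out in \cite{ludwig2017index} for the conic metric on the open cone and refined in \cite{jayasinghe2023l2}; the only new feature here is tracking the mezzo-perversity $W = (W^1,\dots,W^j)$, where $V = (W^1,\dots,W^{j-1})$ fixes the self-adjoint extension on the link $Z$ itself (needed since $Z$ has depth $j-1$) and $W^j$ is the fresh choice of flat subbundle of $\mathcal{H}^{l/2}(Z)$ at the cone point.

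Next I would diagonalize: expand in an orthonormal basis of eigenforms of the Laplace-type operator $\Delta_{V,Z}$ of the complex $\mathcal{P}_V(Z)$. On each eigenspace the harmonicity equations $d\omega = 0$, $\delta\omega = 0$ (equivalently $\omega \in \ker \Delta$ with the cone-point and metric boundary conditions) become explicit ODEs in $x$ whose solutions are powers $x^\alpha$ with $\alpha$ determined by the link degree $k$ and eigenvalue; the indicial roots are exactly as in the Cheeger cone calculation. Imposing $L^2$-integrability near $x = 0$ (using $\operatorname{dVol} = x^l \, dx \, \operatorname{dVol}_Z$) kills the growing solution and, combined with the generalized Neumann condition $\sigma(\delta)(dx)\omega|_{x=1} = 0$ at the metric boundary $\{x=1\}$ (Definition \ref{definition_smooth_boundary_and_singularities} and the boundary conditions spelled out in Subsection \ref{subsubsubsection_Neumann_boundary_condition}), forces: harmonic link classes in degree $k < l/2$ survive (giving $\mathcal{H}^k(\mathcal{P}_V(Z))$), the degree $k = l/2$ classes survive precisely when they lie in the prescribed subbundle $W^j$ of the middle-degree link cohomology (this is where the Cauchy-data / ideal boundary condition $({W^j}^\perp,\{0\})\circ\mathcal{C} = 0$ enters), and degrees $k > l/2$ contribute nothing because the only $L^2$ solutions are either excluded by the indicial analysis or fail the boundary condition. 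The Dirichlet case $\mathcal{P}_{W,D}(U(F_a))$ is then obtained either by the same ODE analysis with the complementary boundary condition $\sigma(d)(dx)\omega|_{x=1}=0$, or more economically by invoking the Hodge-star duality $\star: \mathcal{P}_{W,N}(U(F_a)) \to \mathcal{Q}_{\star W^\perp}^*$ and Proposition \ref{Kernel_equals_cohomology}, which maps the Neumann answer in degree $k-1$ on $Z$ to the Dirichlet answer in degree $k$ on $C(Z)$, accounting for the $dx \wedge$ and the power $x^{2(k-1)-l}$ coming from the scaling of $\star$ on $x^2 g_Z$.

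The main obstacle I expect is the careful bookkeeping at the metric boundary $\{x=1\}$ interacting with the ideal boundary condition at the cone point: one must verify that the graph closure defining $\mathcal{D}_{W,N}(d)$ (equation \eqref{equation_local_complex_first_factor}) actually produces the complex whose harmonic space is as claimed, i.e. that no extra classes sneak in from the non-self-adjointness on the non-Witt link, and that the truncation at $x=1$ versus $x=\infty$ does not change the cohomology (it does not, by the isomorphism of cohomology under the scalings $\exp(\pm\varepsilon h)$ and the discreteness arguments of \cite{bru1992hilbert} cited in Section \ref{section_Witten_deformation}, but this needs to be stated cleanly). A secondary subtlety is that when $l$ is odd there is no middle degree, so $W^j = \{0\}$ and the $k = l/2$ case is vacuous; I would simply remark on this and note the formula degenerates correctly. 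Once the indicial roots and the two boundary conditions are written down, the remaining verification is routine and parallels \cite[\S 5.2.3]{jayasinghe2023l2} closely, so I would present the ODE solutions in a table and cite that reference for the computations that carry over verbatim.
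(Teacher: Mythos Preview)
Your proposal is correct and follows essentially the same approach as the paper: the paper's proof simply refers to \cite[\S 6.4]{jayasinghe2023l2} (Lemma 6.16 and the subsequent discussion) for the Witt case and remarks that the only new ingredient is tracking the extra middle-degree classes selected by the mezzo-perversity $W^j$, which is ``simply a computation following the definitions.'' Your separation-of-variables/indicial-root analysis on the cone, with the mezzo-perversity entering as the Cauchy-data condition at the cone point, is exactly that computation spelled out in detail.
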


This shows how to compute the cohomology for the attracting and expanding factors of a fundamental neighbourhood of an isolated critical point, and thus the local cohomology in order to compute the local Morse polynomial.

\begin{proof}
The proof of a simpler version of this result in the Witt case was given in \cite[\S 6.4]{jayasinghe2023l2} (see Lemma 6.16 of loc. cit. and the subsequent discussion for the cohomology with the adjoint boundary conditions). The proof of this more general result is similar, where the difference is that the additional cohomology due to the mezzo-perversities have to be kept track of. However this is simply a computation following the definitions.
\end{proof}

The following examples, related to Examples \ref{Example_suspension_over_torus_first} and \ref{Example_suspension_torus_Witten_deformation}, serve to build intuition of the above result.

\begin{example}
\label{Example_suspension_torus_2}
Consider a suspension over a torus $\Sigma_r(\mathbb{T}^2_{\theta_1, \theta_2})$ equipped with the wedge metric 
\begin{equation}
    dr^2+\sin^2(r)(d\theta_1^2+d\theta_2^2),
\end{equation}
the resolution of which is $[0,\pi]_r \times \mathbb{T}^2$ where the mezzo-perversity is $W=(W^1)$ where $W^1=d\theta_1$. We consider the Morse function $h=\cos(r)$ on this space. Then we consider the attracting critical point at $r=\pi$. The local cohomology at this point is 
\begin{equation}
\mathcal{H}^k(\mathcal{P}_{W,N}(C(Z)))=
\begin{cases}
      0, & \text{for}\ k > 1 \\  
      d\theta_1  & \text{for}\ k =1\\
      1, & \text{for}\ k =0
\end{cases}
\end{equation}
where $Z=\mathbb{T}^2$.
Similarly the local cohomology at $r=0$ is 
\begin{equation}
\mathcal{H}^k(\mathcal{P}_{W,D}(C_r(Z)))=
\begin{cases}
      dr \wedge r d\theta_1 \wedge r d\theta_2, & \text{for}\ k =3\\  
      dr\wedge d\theta_2  & \text{for}\ k =2\\      
      0, & \text{for}\ k \leq 1.
\end{cases}
\end{equation}
Thus we can compute the Morse polynomial, which is $b^0+b^1+b^2+b^3$. In fact this is a perfect Morse function, and the global cohomology is given by the forms $1,d\theta_1, dr \wedge \sin(r) d\theta_2$ and the volume form.
\end{example}

We can build on the previous example to make the following one.

\begin{example}
\label{example_suspension_suspension_torus_2}
Consider a suspension over the suspension of a torus $\Sigma_x(\Sigma_r(\mathbb{T}^2_{\theta_1, \theta_2}))$ equipped with the wedge metric 
\begin{equation}
    dx^2+\sin^2(x)(dr^2+\sin^2(r)(d\theta_1^2+d\theta_2^2)),
\end{equation}
the resolution of which is $[0,\pi]_x \times [0,\pi]_r \times \mathbb{T}^2$ where the mezzo-perversity is $W=(W^1,W^2)$ where $W^1=d\theta_1,W^2=\{0\}$. We consider the Morse function $h=\cos(x)$ on this space. Then we consider the attracting critical point at $x=\pi$. The local cohomology at this point is 
\begin{equation}
\mathcal{H}^k(\mathcal{P}_{W,N}(C(Z)))=
\begin{cases}
      0, & \text{for}\ k > 3/2 \\  
      d\theta_1  & \text{for}\ k =1\\
      1, & \text{for}\ k =0
\end{cases}
\end{equation}
where $Z=\Sigma_r(\mathbb{T}^2)$ where if we take the conic metric, we must replace 
Similarly the local cohomology at $x=0$ is 
\begin{equation}
\mathcal{H}^k(\mathcal{P}_{W,D}(C_x(Z)))=
\begin{cases}
      dx \wedge x^3 dr \wedge \sin^2(r) d\theta_1 \wedge d\theta_2, & \text{for}\ k =4\\  
      dx\wedge x^2 dr \wedge \sin(r) d\theta_2  & \text{for}\ k =3\\
      0, & \text{for}\ k \leq 2.5.
\end{cases}
\end{equation}
The Morse polynomial is $b^0+b^1+b^3+b^4$ and it is in fact a perfect Morse function.
The global cohomology of the space is generated by $1, d\theta_1, dx \wedge \sin^2(x)dr \wedge \sin(r) d\theta_2$ and the volume form $dx \wedge \sin^3(x) dr \wedge \sin^2(r) d\theta_1 \wedge d\theta_2$
\end{example}

\begin{example}
\label{example_breakdown_refined_inequalities}
We build on Example \ref{Example_suspension_over_torus_first}. With the mezzo-perversity given in that example, it is easy to check that the local and global complexes are self-dual. The global Morse polynomial is $b^3 M(\mathcal{P}_{W},h)(b^{-1})=M(\mathcal{P}_{W^{\perp}},-h)(b)=1+b^1+b^2+b^3$ and the Morse-function is perfect.

Now we consider the same space and the height function as the Morse function, but we consider a different mezzo-perversity corresponding to picking $V=\mathcal{H}^1(Z)$ at the two singular strata.
In this case, the local Morse polynomial at the critical point at the \textit{south pole} is $1+2b$ while at the \textit{north pole} we get $b^3$. The resulting Morse polynomial is $M(\mathcal{P}_{V},h)(b)=1+2b+b^3$.
If we consider the Morse polynomial for the function $-h$ with the mezzo-perversity $\{0\}$, we get the Morse polynomial $M(\mathcal{P}_{\{0\}},h)(b)=1+2b^2+b^3$. It is easy to check that $b^3M(\mathcal{P}_{V},h)(b^{-1})=M(\mathcal{P}_{0},-h)(b)$.
\end{example}

We can easily construct examples of flat stratified Morse-Bott functions by taking products of the spaces $X$ above with $S^1$, and extending the Morse functions to the product space by pullback along the projection from $X \times S^1$ to $X$, which then is a flat stratified Morse-Bott function.
It is easy to compute the local and global cohomology groups using the K\"unneth formula in this setting.
While the following example has a stratified Morse-Bott function, it does not satisfy the flatness assumption in our setting. However we can verify that the Morse inequalities as stated in Theorem \ref{Theorem_strong_Morse_Bott_de_Rham_intro_version} extends to this example where it also follows from \cite{kirwan1988intersection}.

\begin{figure}[h]
    \centering
    \includegraphics[scale=.35]{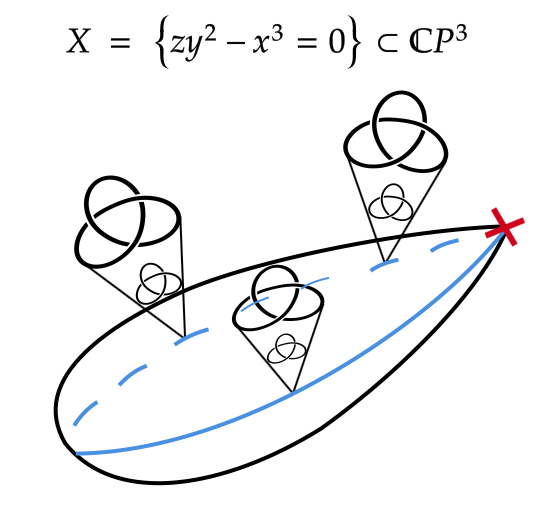}
    \caption{A singular projective variety with stratification $X_4\supset X_2\supset X_0$. The two-dimensional piece $X_2=\{ x=y= 0 \}$ is cut out by the blue curve, and a neighborhood of $X_2$ fibers over $X_2$, with fibers given by cones over a trefoil knot.}
    \label{fig:enter-label}
\end{figure}

\begin{example}
\label{example_singular_Fano}
Consider the projective variety $zy^2-x^3=0$ in $\mathbb{CP}^3$ with coordinates $[w:x:y:z]$, which admits the algebraic torus action $(\lambda,\mu)\cdot [w:x:y:z]=[w:\lambda^3x:\mu^3y:\lambda^2\mu z]$. The circle action corresponding to $\mu=1$, $\lambda=e^{i\theta}$ is a K\"ahler Hamiltonian action corresponding to a function $h$ with a non-isolated critical point set corresponding to the set $\{x=z=0\}$ which is a singular subvariety of the projective variety which has a depth 2 singularity at $[1:0:0:0]$. The Fubini-Study metric on this space is a wedge metric, but it doesn't satisfy the geometric Witt condition. However we can put a different wedge metric on this space and find a perturbation of $h$ which is a stratified Morse-Bott function.
In this case it is possible to check that there are two forms of degrees $0$ and $2$ in the local cohomology group at the non-isolated critical point set, and a form of degree $4$ in the local cohomology at the isolated critical point. Thus the Morse polynomial is $M(b)=b^0+b^2+b^4$, which is a perfect stratified Morse-Bott function.
\end{example}

In the same example, there are wedge K\"ahler Hamiltonian Morse functions corresponding to the torus action when $\mu$ is not fixed to be $1$, and one can check that there are three critical points, one on each stratum. It is easy to check that the one on the stratum of depth $1$ is a stratified non-degenerate critical point set of type I (in fact this is the case for any isolated critical points which are neither on the regular stratum stratum of dimension $0$). Similar examples appear in \cite{jayasinghe2023l2,jayasinghe2024holomorphic} to which we refer the reader for more examples.
Taking a product of such spaces with stratified Morse functions (with isolated critical points) with compact manifolds we can construct examples where the critical points are not isolated, where the flatness condition we study here is met, and in fact where the fibrations appearing in the fundamental neighbourhoods are trivial.

%------------------------------
% appendix and references
%------------------------------
%\printbibliography[title={References}]  %ieee
\bibliographystyle{alpha}
\bibliography{references}

\newcommand{\etalchar}[1]{$^{#1}$}
\begin{thebibliography}{ALMP17b}

\bibitem[AB83]{AtiyahBottYangMillsRiemann83}
M.~F. Atiyah and R.~Bott.
\newblock The {Y}ang-{M}ills equations over {R}iemann surfaces.
\newblock {\em Philos. Trans. Roy. Soc. London Ser. A}, 308(1505):523--615, 1983.

\bibitem[AB95]{austin1995morse}
David~M Austin and Peter~J Braam.
\newblock {Morse-Bott theory and equivariant cohomology}.
\newblock In {\em The Floer memorial volume}, pages 123--183. Springer, 1995.

\bibitem[ABL{\etalchar{+}}15]{RefinedintersectionAlbinBanagl2015}
Pierre Albin, Markus Banagl, Eric Leichtnam, Rafe Mazzeo, and Paolo Piazza.
\newblock Refined intersection homology on non-{W}itt spaces.
\newblock {\em J. Topol. Anal.}, 7(1):105--133, 2015.

\bibitem[AGR23]{Albin_2017_index}
Pierre Albin and Jesse Gell-Redman.
\newblock The index formula for families of {Dirac} type operators on pseudomanifolds.
\newblock {\em J. Differ. Geom.}, 125(2):207--343, 2023.

\bibitem[Alb17]{Albin_hodge_theory_on_stratified_spaces}
Pierre Albin.
\newblock On the {H}odge theory of stratified spaces.
\newblock In {\em Hodge theory and {$L^2$}-analysis}, volume~39 of {\em Adv. Lect. Math. (ALM)}, pages 1--78. Int. Press, Somerville, MA, 2017.

\bibitem[ALC17]{Jesus2018Wittens}
Jes\'{u}s~A. \'{A}lvarez L\'{o}pez and Manuel Calaza.
\newblock Witten's perturbation on strata.
\newblock {\em Asian J. Math.}, 21(1):47--125, 2017.

\bibitem[ALCF18]{Jesus2018Wittensgeneral}
Jes\'{u}s~A. \'{A}lvarez L\'{o}pez, Manuel Calaza, and Carlos Franco.
\newblock Witten's perturbation on strata with general adapted metrics.
\newblock {\em Ann. Global Anal. Geom.}, 54(1):25--69, 2018.

\bibitem[ALMP12]{Albin_signature}
Pierre Albin, \'{E}ric Leichtnam, Rafe Mazzeo, and Paolo Piazza.
\newblock The signature package on {W}itt spaces.
\newblock {\em Ann. Sci. \'{E}c. Norm. Sup\'{e}r. (4)}, 45(2):241--310, 2012.

\bibitem[ALMP17a]{albin2017novikov}
Pierre Albin, Eric Leichtnam, Rafe Mazzeo, and Paolo Piazza.
\newblock The {N}ovikov conjecture on {C}heeger spaces.
\newblock {\em J. Noncommut. Geom.}, 11(2):451--506, 2017.

\bibitem[ALMP17b]{albin2013novikov}
Pierre Albin, Eric Leichtnam, Rafe Mazzeo, and Paolo Piazza.
\newblock The {N}ovikov conjecture on {C}heeger spaces.
\newblock {\em J. Noncommut. Geom.}, 11(2):451--506, 2017.

\bibitem[ALMP18]{Albin_hodge_theory_cheeger_spaces}
Pierre Albin, Eric Leichtnam, Rafe Mazzeo, and Paolo Piazza.
\newblock Hodge theory on {C}heeger spaces.
\newblock {\em J. Reine Angew. Math.}, 744:29--102, 2018.

\bibitem[AM11]{albin2010resolution}
Pierre Albin and Richard Melrose.
\newblock Resolution of smooth group actions.
\newblock In {\em Spectral theory and geometric analysis}, volume 535 of {\em Contemp. Math.}, pages 1--26. Amer. Math. Soc., Providence, RI, 2011.

\bibitem[Ban02]{BanaglnonWitt2002}
Markus Banagl.
\newblock {\em Extending intersection homology type invariants to non-{Witt} spaces}, volume 760 of {\em Mem. Am. Math. Soc.}
\newblock Providence, RI: American Mathematical Society (AMS), 2002.

\bibitem[Ban13]{BanaglFlatfiber2013}
Markus Banagl.
\newblock Isometric group actions and the cohomology of flat fiber bundles.
\newblock {\em Groups Geom. Dyn.}, 7(2):293--321, 2013.

\bibitem[Bis86]{bismut1986witten}
Jean-Michel Bismut.
\newblock The {W}itten complex and the degenerate {M}orse inequalities.
\newblock {\em J. Differential Geom.}, 23(3):207--240, 1986.

\bibitem[BL91]{bismut1991complex}
Jean-Michel Bismut and Gilles Lebeau.
\newblock Complex immersions and {Q}uillen metrics.
\newblock {\em Inst. Hautes \'{E}tudes Sci. Publ. Math.}, (74):ii+298 pp. (1992), 1991.

\bibitem[BL92]{bru1992hilbert}
J.~Br\"{u}ning and M.~Lesch.
\newblock Hilbert complexes.
\newblock {\em J. Funct. Anal.}, 108(1):88--132, 1992.

\bibitem[Fee22]{feehan2022bialynicki}
Paul Feehan.
\newblock {Bialynicki-Birula theory, Morse-Bott theory, and resolution of singularities for analytic spaces}.
\newblock {\em arXiv preprint arXiv:2206.14710}, 2022.

\bibitem[GGS24]{Equivariant_loc_Sparks_2024}
Pietro~Benetti Genolini, Jerome~P. Gauntlett, and James Sparks.
\newblock Equivariant localization for {A}d{S}/{CFT}.
\newblock {\em J. High Energy Phys.}, (2):Paper No. 15, 2024.

\bibitem[GH79]{gibbons1979classification}
Gary~W Gibbons and Stephen~W Hawking.
\newblock Classification of gravitational instanton symmetries.
\newblock {\em Communications in Mathematical Physics}, 66:291--310, 1979.

\bibitem[GM83a]{goresky1983intersection}
Mark Goresky and Robert MacPherson.
\newblock Intersection homology. {II}.
\newblock {\em Invent. Math.}, 72(1):77--129, 1983.

\bibitem[GM83b]{goresky1988stratified}
Mark Goresky and Robert~and MacPherson.
\newblock Stratified {M}orse theory.
\newblock In {\em Singularities, {P}art 1 ({A}rcata, {C}alif., 1981)}, volume~40 of {\em Proc. Sympos. Pure Math.}, pages 517--533. Amer. Math. Soc., Providence, R.I., 1983.

\bibitem[Jay23]{jayasinghe2023l2}
Gayana Jayasinghe.
\newblock {$L^2$ Atiyah-Bott-Lefschetz fixed point theorem on stratified pseudomanifolds with wedge metrics}, 2023.

\bibitem[Jay24]{jayasinghe2024holomorphic}
Gayana Jayasinghe.
\newblock Holomorphic witten instanton complexes on stratified pseudomanifolds with k\"ahler wedge metrics, 2024.

\bibitem[Kir84]{Kirwancohomofquot84}
Frances~Clare Kirwan.
\newblock {\em Cohomology of quotients in symplectic and algebraic geometry}, volume~31 of {\em Math. Notes (Princeton)}.
\newblock Princeton University Press, Princeton, NJ, 1984.

\bibitem[Kir88]{kirwan1988intersection}
Frances Kirwan.
\newblock {Intersection homology and torus actions}.
\newblock {\em Journal of the American Mathematical Society}, 1(2):385--400, 1988.

\bibitem[KP21]{Kirwan_Morse_21}
Frances Kirwan and Geoffrey Penington.
\newblock Morse theory without non-degeneracy.
\newblock {\em Q. J. Math.}, 72(1-2):455--514, 2021.

\bibitem[KR22]{kottke2022products}
Chris Kottke and Fr{\'e}d{\'e}ric Rochon.
\newblock Products of manifolds with fibered corners.
\newblock {\em arXiv preprint arXiv:2206.07262}, 2022.

\bibitem[Lud13]{ludwig2013analytic}
Ursula Ludwig.
\newblock An analytic approach to the stratified {M}orse inequalities for complex cones.
\newblock {\em Internat. J. Math.}, 24(12):1350100, 12, 2013.

\bibitem[Lud17]{ludwig2017index}
Ursula Ludwig.
\newblock An index formula for the intersection {E}uler characteristic of an infinite cone.
\newblock {\em C. R. Math. Acad. Sci. Paris}, 355(1):94--98, 2017.

\bibitem[Lud24]{ludwig2024morse}
Ursula Ludwig.
\newblock {A Morse-Bott type complex and the Bismut--Zhang torsion for intersection cohomology}.
\newblock In {\em Annales de l'Institut Fourier}, pages 1--68, 2024.

\bibitem[Maz91]{Mazzeo_Edge_Elliptic_1}
Rafe Mazzeo.
\newblock Elliptic theory of differential edge operators. {I}.
\newblock {\em Comm. Partial Differential Equations}, 16(10):1615--1664, 1991.

\bibitem[Mel93]{melroseAPS}
Richard~B. Melrose.
\newblock {\em The {A}tiyah-{P}atodi-{S}inger index theorem}, volume~4 of {\em Research Notes in Mathematics}.
\newblock A K Peters, Ltd., Wellesley, MA, 1993.

\bibitem[Rot16]{Orientation_Morse_Bott_Rot_2016}
Thomas~O. Rot.
\newblock The {M}orse-{B}ott inequalities, orientations, and the {T}hom isomorphism in {M}orse homology.
\newblock {\em C. R. Math. Acad. Sci. Paris}, 354(10):1026--1028, 2016.

\bibitem[Sch88]{schulze1986elliptic}
B.-W. Schulze.
\newblock Elliptic complexes on manifolds with conical singularities.
\newblock In {\em Seminar {A}nalysis of the {K}arl-{W}eierstrass-{I}nstitute of {M}athematics, 1986/87 ({B}erlin, 1986/87)}, volume 106 of {\em Teubner-Texte Math.}, pages 170--223. Teubner, Leipzig, 1988.

\bibitem[Tay11]{taylor1996partial}
Michael~E. Taylor.
\newblock {\em Partial differential equations {I}. {B}asic theory}, volume 115 of {\em Applied Mathematical Sciences}.
\newblock Springer, New York, second edition, 2011.

\bibitem[TZ98]{tian1998holomorphic}
Youliang Tian and Weiping Zhang.
\newblock Holomorphic {M}orse inequalities in singular reduction.
\newblock {\em Math. Res. Lett.}, 5(3):345--352, 1998.

\bibitem[Wit81]{witten1981dynamical}
Edward Witten.
\newblock Dynamical breaking of supersymmetry.
\newblock {\em Nuclear Physics B}, 188(3):513--554, 1981.

\bibitem[Wit82a]{witten1982constraints}
Edward Witten.
\newblock Constraints on supersymmetry breaking.
\newblock {\em Nuclear Physics B}, 202(2):253--316, 1982.

\bibitem[Wit82b]{witten1982supersymmetry}
Edward Witten.
\newblock Supersymmetry and {M}orse theory.
\newblock {\em J. Differential Geometry}, 17(4):661--692 (1983), 1982.

\bibitem[Wit83]{witten1983fermion}
Edward Witten.
\newblock {Fermion quantum numbers in Kaluza-Klein theory}.
\newblock {\em et al: Modern Kaluza-Klein Theories}, pages 438--511, 1983.

\bibitem[Wit84]{witten1984holomorphic}
Edward Witten.
\newblock Holomorphic {M}orse inequalities.
\newblock In {\em Algebraic and differential topology---global differential geometry}, volume~70 of {\em Teubner-Texte Math.}, pages 318--333. Teubner, Leipzig, 1984.

\bibitem[WZ98]{wu1998equivariant}
Siye Wu and Weiping Zhang.
\newblock Equivariant holomorphic {M}orse inequalities. {III}. {N}on-isolated fixed points.
\newblock {\em Geom. Funct. Anal.}, 8(1):149--178, 1998.

\bibitem[Zha01]{Zhanglectures}
Weiping Zhang.
\newblock {\em Lectures on {C}hern-{W}eil theory and {W}itten deformations}, volume~4 of {\em Nankai Tracts in Mathematics}.
\newblock World Scientific Publishing Co., Inc., River Edge, NJ, 2001.

\end{thebibliography}
\end{document}